\renewcommand{\rho}{\uprho}
\renewcommand{\P}{\mathbb{P}}
\newcommand{\CC}{\mathbb{C}}
\newcommand{\QQ}{\mathbb{Q}}
\newcommand{\Q}{\mathbb{Q}}
\newcommand{\ZZ}{\mathbb{Z}}
\newcommand{\PP}{\mathbb{P}}
\newcommand{\mumu}{{\boldsymbol{\mu}}}
\def \Bir {\mathrm{Bir}}
\def \GL {\mathrm{GL}}
\def \SL {\mathrm{SL}}
\def \PGL {\mathrm{PGL}}
\def \GL {\mathrm{GL}}
\def \Sym {\mathrm{Sym}}
\def \g {\mathrm{g}}
\def \dd {\mathrm{d}}
\newcommand{\xref}[1]{{\rm \ref{#1}}}
\newcommand{\Aut}{\operatorname{Aut}}
\newcommand{\rr}{\operatorname{r}}
\newcommand{\Pic}{\operatorname{Pic}}
\newcommand{\Bs}{\operatorname{Bs}}
\newcommand{\rk}{\operatorname{rk}}
\newcommand{\Cl}{\operatorname{Cl}}
\newcommand{\Cr}{\operatorname{Cr}}
\newtheorem{theorem}{Theorem}[section]
\newtheorem{lemma}[theorem]{Lemma}
\newtheorem{corollary}[theorem]{Corollary}
\newtheorem{question}[theorem]{Question}
\newtheorem{proposition}[theorem]{Proposition}
\theoremstyle{definition}
\newtheorem{example}[theorem]{Example}
\theoremstyle{remark}
\newtheorem{remark}[theorem]{Remark}
\renewcommand\labelenumi{(\roman{enumi})}
\renewcommand\theenumi{(\roman{enumi})}
\newcommand{\comment}[1]{}
\def\le {\leqslant}
\def\ge {\geqslant}
\title{$p$-subgroups in the space Cremona group}
\author{Yuri Prokhorov}
\author{Constantin Shramov}
\thanks{This work was supported by the Russian Science Foundation under grant 14-50-00005.}
\address{
Steklov Mathematical Institute of Russian Academy of Sciences, 8 Gubkina st., Moscow, Russia, 119991
}
\email{prokhoro@mi.ras.ru, costya.shramov@gmail.com}
\begin{document}

\begin{abstract}
We prove that if $X$ is a rationally connected threefold and~$G$
is a $p$-subgroup in the group of birational selfmaps of $X$,
then $G$ is an abelian group generated by at most $3$ elements
provided that $p\ge 17$. We also prove a similar result for $p\ge 11$
under an assumption that $G$ acts on
a (Gorenstein) $G$-Fano threefold, and show that the same holds
for $p\ge 5$ under an assumption that $G$ acts
on a $G$-Mori fiber space.
\end{abstract}

\maketitle
\tableofcontents

\section{Introduction}

Let $\Bbbk$ be an algebraically closed field of characteristic $0$.
The Cremona group $\Cr_r(\Bbbk)$ of rank $r$ is
the group of birational selfmaps
of the projective space $\PP_{\Bbbk}^r$.
Classification of finite subgroups of Cremona groups is a classical problem
which goes back to works of E. Bertini, G. Castelnuovo, S. Kantor, and others.
For Cremona group of rank $2$ there is a classification
of finite subgroups, see \cite{Blanc2009}, \cite{Dolgachev-Iskovskikh};
some results are also known
for non-closed fields \cite{Serre2009}, \cite{Dolgachev-Iskovskikh-2009}, \cite{Yasinsky2016}.

For rank $3$ and higher it is generally understood that no reasonable explicit classification
of finite subgroups is possible. However, there are various
boundedness properties of subgroups
of Cremona groups (or, more generally, subgroups
of birational automorphism groups
of arbitrary rationally connected varieties) that are partially proved and partially expected
to hold, see \cite{Serre2009}, \cite{Serre-2008-2009}, \cite{Popov-Jordan},
\cite{ProkhorovShramov-RC}, \cite{Prokhorov-Shramov-2013}, \cite{Prokhorov-Shramov-JCr3}, \cite{Yasinsky2016a},
and references therein. A particular case that is well studied is the case of
simple non-abelian groups;
one can find a classification of such subgroups of $\Cr_3(\Bbbk)$ in~\cite{Prokhorov2009e}.

Recall that a \emph{$p$-group} is a finite group of order $p^k$, where $p$ is a prime.
Such groups are somewhat opposite to simple non-abelian groups from the point of view of group theory.
However, they have various nice properties, and classifying $p$-subgroups of birational
automorphism groups looks an interesting problem
(see \cite{Beauville2007}, \cite{Prokhorov2011a}, \cite{Prokhorov-2groups}).

J.-P. Serre \cite[\S6]{Serre2009} asked if the following boundedness property
holds for $p$-subgroups of Cremona groups.

\begin{question}
Is it true that for any sufficiently large prime number $p$
(depending on~$r$), every finite
$p$-subgroup of $\Cr_r(\Bbbk)$ is abelian and is generated by at most~$r$ elements?
\end{question}

The positive answer to this question was obtained in~\mbox{\cite[Theorem~5.3]{Serre2009}},
\cite[Theorem~1.10]{ProkhorovShramov-RC}
in dimension at most $3$, and in arbitrary dimension
modulo so-called Borisov--Alexeev--Borisov conjecture; the latter conjecture
was recently proved in~\cite[Theorem~1.1]{Birkar}. Moreover,
the following result is implied by
\cite[Theorem~1.2.4]{Prokhorov-Shramov-JCr3}.

\begin{theorem}
\label{theorem:too-large}
Let $X$ be a rationally connected threefold and
$G$ be a finite $p$-subgroup of $\Bir(X)$.
Suppose that $p>10368$.
Then $G$ is an abelian group generated by at most $3$ elements.
\end{theorem}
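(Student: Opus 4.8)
The statement is the threefold special case of \cite[Theorem~1.2.4]{Prokhorov-Shramov-JCr3}, so the plan is to recover it by combining the $G$-equivariant minimal model program with one representation-theoretic observation. First I would \emph{regularize} the action: any finite subgroup of $\Bir(X)$ becomes biregular on a suitable smooth projective model, so we may assume $G$ acts by automorphisms on a smooth rationally connected threefold. Running the $G$-equivariant MMP then produces a $G$-Mori fiber space $\phi\colon Y\to Z$, where $Y$ has terminal $G$-$\QQ$-factorial singularities, $-K_Y$ is $\phi$-ample, and $\rho^G(Y/Z)=1$. Since birational conjugation does not change the abstract isomorphism type of $G$, it suffices to bound $p$-subgroups acting on such $Y$, and one splits the analysis according to $\dim Z\in\{0,1,2\}$, corresponding to a $G$-Fano threefold, a del Pezzo fibration over a curve, and a conic bundle over a surface.

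The heart of the matter is a linearization principle. If $G$ is a $p$-group acting on a smooth threefold and fixing a point $y$, then $G$ embeds into $\GL(T_y Y)\cong\GL_3(\Bbbk)$. Every irreducible representation of a $p$-group has dimension a power of $p$, so for $p>3$ every irreducible constituent is one-dimensional, the action is simultaneously diagonalizable, and $G$ is abelian generated by at most $3$ elements, as required. Thus the entire problem reduces to two tasks: producing a \emph{smooth} $G$-fixed point on $Y$, and controlling the finitely many primes that can intervene when such a point fails to exist (for instance at singular fixed points or along positive-dimensional fibers).

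For the existence of fixed points I would argue topologically. Boundedness of terminal Fano threefolds, now unconditional by \cite[Theorem~1.1]{Birkar}, bounds the Betti numbers of $Y$ and the number and analytic type of its singular points uniformly. Hence, once $p$ exceeds these universal bounds, $G$ acts trivially on the cohomology of $Y$, and a Lefschetz-type count forces a fixed point; a similar argument on the base $Z$ and on the generic fiber handles the fibration cases by induction on dimension, where the relevant surface bounds for del Pezzo surfaces and conic bundles are governed by the orders of the associated Weyl groups. When the only available fixed point is singular, one replaces the tangent space by the local analytic model of a terminal threefold singularity, a cyclic quotient singularity whose index may be divisible by $p$, and it is precisely here that the remaining primes enter.

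The hard part will be the $G$-Fano case $\dim Z=0$ when $G$ fixes only singular points, since bounding $p$ then demands a uniform estimate over all terminal $G$-Fano threefolds simultaneously: one must control the torsion of the local class groups at the singularities and the action on $|-K_Y|$. Because this is done with general estimates valid across the whole bounded family rather than with a refined case-by-case classification, the resulting threshold is a large explicit constant of the shape $2^a 3^b$ (here $10368=2^7\cdot 3^4$), reflecting the prime factors of the orders of automorphism groups and Weyl groups that can appear; sharpening these estimates is exactly what allows the improved bounds announced in the abstract.
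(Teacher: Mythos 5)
Your proposal misses what the paper actually does for this statement, and the replacement you offer has a real hole. In the paper, Theorem~\ref{theorem:too-large} is not re-proved at all: it is a one-line consequence of the cited result. By \cite[Theorem~1.2.4]{Prokhorov-Shramov-JCr3}, every finite subgroup of $\Bir(X)$ for a rationally connected threefold $X$ contains a normal abelian subgroup $A$ of index at most $10368$ generated by at most $3$ elements; since every proper subgroup of a $p$-group has index divisible by $p$, the hypothesis $p>10368$ forces $[G:A]=1$, i.e.\ $G=A$. You name exactly the right citation in your first sentence, but then never perform this trivial index argument; instead you set out to re-derive the theorem from scratch, which is a much harder task (it is essentially the content of \cite{Prokhorov-Shramov-JCr3} and of the present paper).

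The from-scratch sketch does not close. Its crux is the claim that, once $p$ exceeds uniform bounds coming from boundedness of Fano threefolds, $G$ acts trivially on cohomology and ``a Lefschetz-type count forces a fixed point.'' Trivial action on cohomology makes every Lefschetz number equal to $\chi_{\mathrm{top}}(Y)$, which can vanish: for instance, a smooth Fano threefold of genus $10$ has $\rho=1$ and $b_3=4$, so $\chi_{\mathrm{top}}=2+2-4=0$, and the count gives nothing. Even when $\chi_{\mathrm{top}}\neq 0$, nonvanishing Lefschetz numbers only give fixed points element by element; a common fixed point for the whole $p$-group needs the congruence $\chi(Y^G)\equiv\chi(Y)\pmod p$, hence again $p\nmid\chi_{\mathrm{top}}(Y)$, plus separate treatment of singular fixed loci and of the fibration cases. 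This fixed-point existence is precisely the hard technical core of the present paper (Theorem~\ref{theorem:fixed-point}), which requires the detailed case analysis of \S\ref{section:surfaces}--\S\ref{section:Gorenstein} and even then only reaches $p\ge 17$; it cannot be dispatched by boundedness plus Lefschetz in a paragraph. Your linearization step (a fixed smooth point gives $G\hookrightarrow\GL_3(\Bbbk)$, hence abelian and $3$-generated for $p>3$) is correct and matches Remark~\ref{remark:Aut-P} and Lemma~\ref{lemma:GL}, but everything feeding into it is left unproven.
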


If we restrict ourselves to the case of abelian $p$-groups, more precise results are available.

\begin{theorem}[{\cite{Beauville2007}}]
\label{theorem:Beauville}
Let $G$ be an abelian $p$-subgroup of $\Cr_2(\Bbbk)$.
Then~$G$ is generated by at most~$r$ elements,
where
\[
r\le
\begin{cases}
4& \text{if $p=2$,}
\\
3& \text{if $p=3$,}
\\
2& \text{if $p\ge 5$.}
\end{cases}
\]
For any $p$ this bound is attained for some
abelian $p$-sub\-group~\mbox{$G\subset \Cr_2(\Bbbk)$}.
\end{theorem}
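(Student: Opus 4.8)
The plan is to realize $G$ as a group of biregular automorphisms of a rational surface and to analyse the two types of minimal model that can arise. First I would use regularization to replace the birational action of $G$ by a biregular action on a smooth projective rational surface $S$, and then run the $G$-equivariant minimal model program. By the classification of minimal rational $G$-surfaces (see \cite{Dolgachev-Iskovskikh}), this reduces to one of two cases: either $S$ is a del Pezzo surface with $\Pic(S)^G\cong\ZZ$, or $S$ carries a $G$-equivariant conic bundle structure $S\to\PP^1$ with $\rk\Pic(S)^G=2$. Since the minimal number of generators is subadditive in short exact sequences of finite groups, in either case it suffices to bound the ranks of the abelian $p$-groups produced by the relevant geometric pieces.

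In the conic bundle case I would exploit the exact sequence $1\to G_F\to G\to G_B\to 1$, where $G_B\subseteq\Aut(\PP^1)=\PGL_2(\Bbbk)$ is the image of $G$ in the automorphisms of the base and $G_F$ acts trivially on the base, hence embeds into the automorphism group of the generic fibre, a conic over $\Bbbk(\PP^1)$. Over any field of characteristic zero the finite abelian $p$-subgroups of $\PGL_2$ are cyclic for odd $p$ and at most $(\ZZ/2)^2$ for $p=2$; thus $\rk G_F$ and $\rk G_B$ are each at most $1$ for odd $p$ and at most $2$ for $p=2$, and subadditivity gives $\rk G\le 2$ for odd $p$ and $\rk G\le 4$ for $p=2$.

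The del Pezzo case is the heart of the argument, and here the main device is a fixed-point count. If $G$ fixes a smooth point $P\in S$, then linearizing the action shows that $G$ embeds faithfully into $\GL(T_PS)=\GL_2(\Bbbk)$; an abelian $p$-group of matrices is simultaneously diagonalizable, so $G$ is a subgroup of $(\Bbbk^*)^2$ and $\rk G\le 2$. To force fixed points I would use the congruence $\chi_{\mathrm{top}}(S)\equiv\chi_{\mathrm{top}}(S^G)\pmod p$, valid for any $p$-group: if $S^G=\varnothing$ then $p\mid\chi_{\mathrm{top}}(S)=12-\deg S$, so a group with no fixed point can occur only when $\deg S\equiv 12\pmod p$. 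For $p\ge 13$ this is impossible. For $p\in\{5,7,11\}$ only a handful of degrees survive, and each is eliminated directly: some are never $G$-minimal (e.g. degree $7$), some have automorphism group of order prime to $p$ (the order is $120$ in degree $5$, while $11\nmid|W(E_8)|$), and the remaining ones, such as a degree $2$ surface when $p=5$, force $G$ into $\PGL_3$; in every case $\rk G\le 2$. For $p=3$ only degrees $3,6,9$ survive, and for $p=2$ only the even degrees; in these finitely many cases one reads off the maximal abelian $p$-rank from the explicit automorphism groups, obtaining $\rk G\le 3$ when $p=3$ (the maximum realized on the Fermat cubic surface, whose automorphism group contains $(\ZZ/3)^3$) and $\rk G\le 4$ when $p=2$.

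Combining the two cases yields the stated upper bounds. For sharpness I would exhibit: the diagonal subgroup $(\ZZ/p)^2$ of the maximal torus of $\PGL_3=\Aut(\PP^2)$ for every $p$; the group $(\ZZ/3)^3\subset\Aut(\text{Fermat cubic})$ for $p=3$; and $(\ZZ/2)^2\times(\ZZ/2)^2\subset\Aut(\PP^1\times\PP^1)$ for $p=2$, all of which give subgroups of $\Cr_2(\Bbbk)$ since the ambient surfaces are rational. The main obstacle is the final case analysis for the small primes $p=2,3$: verifying that none of the relevant automorphism groups (equivalently, the subgroups of the Weyl groups $W(E_{9-d})$ together with the torus parts in degrees $\ge 6$) contains an abelian $p$-subgroup of rank exceeding the asserted bound is a finite but delicate computation, and it is precisely here that the sharp constants $3$ and $4$ must be pinned down.
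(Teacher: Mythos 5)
Note first that the paper does not prove Theorem~\ref{theorem:Beauville} at all: it is imported from \cite{Beauville2007}, and the only related statements proved internally are Propositions~\ref{proposition:Cr2} and~\ref{proposition:Cr2-fixed-point}, which treat arbitrary (not a priori abelian) $p$-groups but only for $p\ge 5$. Your proposal is, in effect, a reconstruction of Beauville's original argument, and its skeleton — regularization, $G$-equivariant MMP, and the del Pezzo versus conic bundle dichotomy — is the same one the paper uses in Lemma~\ref{lemma:smooth-rational-surface}. Where you genuinely diverge from the paper's machinery is in how the two cases are closed. For del Pezzo surfaces the paper produces a fixed point by counting $(-1)$-curves, by the anticanonical map in degrees $1$ and $2$, and by the holomorphic Lefschetz formula in degree $5$ (Lemma~\ref{lemma:dP}), whereas you use the Smith-theory congruence $\chi_{\mathrm{top}}(S)\equiv\chi_{\mathrm{top}}(S^G)\pmod p$, which instantly isolates the degrees with $p\mid 12-\deg S$; this is a clean alternative. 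For conic bundles the paper again goes through fixed points (Corollary~\ref{corollary:MFS}, resting on \cite[Corollary~3.7]{ProkhorovShramov-RC}), while you argue purely group-theoretically via $1\to G_F\to G\to G_B\to 1$ and subadditivity of the number of generators; this is more elementary, but note that it only bounds the rank of a group already assumed abelian, whereas the paper's fixed-point route also \emph{proves} abelianness — which is exactly why your argument could not substitute for the proof of Proposition~\ref{proposition:Cr2}. You also carry out the small-prime and exceptional-degree analysis ($p=2,3$, and degrees $1,2,5,7$ for $p=11,7,5$) that the paper never attempts; your assertions there are correct, but to turn them into proofs you should record three fillable facts: (a) $\Aut(S)$ acts faithfully on $\Pic(S)$ when $\deg S\le 5$, which is what legitimizes the Weyl-group bounds behind ``$7\nmid 120$'' and ``$11\nmid|W(E_8)|$''; (b) every abelian $p$-subgroup of $\PGL_3(\Bbbk)$ has rank at most $2$ — for $p\ge 5$ lift to a Sylow subgroup of the preimage in $\SL_3(\Bbbk)$ as in Lemma~\ref{lemma:PGL} and use Lemma~\ref{lemma:GL} plus Remark~\ref{remark:Aut-P}-style diagonalization inside the rank-two torus of $\SL_3(\Bbbk)$, and for $p=3$ one must also observe that the non-toral Heisenberg image still has rank $2$; and (c) the finite verification that no $G$-minimal del Pezzo surface of even degree admits an abelian $2$-group of rank $5$ (degree $8$ reduces to the centralizer of the ruling swap, degrees $2,4,6$ to the explicit torus-plus-finite structure of their automorphism groups). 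Your sharpness examples coincide with those in \S\ref{section:examples} of the paper and are correct.
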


\begin{theorem}[{\cite{Prokhorov2011a}}, {\cite{Prokhorov-2groups}}]
\label{theorem:elementary-p-groups}
Let $X$ be a rationally connected threefold and
$G$ be an abelian $p$-subgroup of $\Bir(X)$.
Then $G$ is generated by at most~$r$ elements,
where
\[
r\le
\begin{cases}
6& \text{if $p=2$,}
\\
5& \text{if $p=3$,}
\\
4& \text{if $p=5$, $7$, $11$, or $13$,}
\\
3& \text{if $p\ge 17$.}
\end{cases}
\]
For any $p\ge 17$ and for $p=2$ this bound is attained for some abelian
$p$-sub\-group~\mbox{$G\subset \Cr_3(\Bbbk)$}.
\end{theorem}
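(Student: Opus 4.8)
The plan is to bound the rank of elementary abelian $p$-subgroups. Indeed, the minimal number of generators of a finite abelian $p$-group $A$ equals $\dim_{\FF_p}(A/pA)$, which is the rank of its elementary abelian subgroup $A[p]=\{a\in A\mid pa=0\}$; hence it suffices to prove that every $G\cong(\ZZ/p)^r$ inside $\Bir(X)$ has $r$ bounded by $6,5,4,3$ in the four ranges of $p$. Replacing $X$ by an equivariant resolution of the graph of the $G$-action, I may assume $G$ acts biregularly on a smooth projective rationally connected threefold. Running the $G$-equivariant minimal model program, which terminates in a Mori fibration since $K_X$ is not pseudo-effective, I reduce to a $G$-Mori fiber space $f\colon Y\to S$ with $Y$ terminal and $\QQ$-factorial, $-K_Y$ relatively ample, and $\rho(Y/S)^G=1$. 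I then treat separately the cases $\dim S=0,1,2$.

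The central local tool is linearization at a fixed point: if $G$ fixes a \emph{smooth} point $y\in Y$, the tangent representation $G\to\GL(T_yY)\cong\GL_3(\Bbbk)$ is faithful, because a finite-order automorphism fixing $y$ and acting trivially on $T_yY$ is trivial on the formal neighbourhood and hence on the connected $Y$; as $G$ is diagonalizable its image lies in a maximal torus $(\Bbbk^{\times})^3$, forcing $r\le 3$. When $\dim S=0$ the variety $Y$ is a $G$-Fano, and a holomorphic Lefschetz fixed point argument (applied to a generator and descended through $G$, using $\chi(\OOO_Y)=1$) yields a $G$-fixed point. If it is smooth we conclude $r\le 3$. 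The difficulty, which I expect to be the main obstacle, is that the fixed point may be a terminal singularity: terminal threefold points are cyclic quotients of compound Du Val points, so $G$ can only be compared to a subgroup of $\GL_3$ up to the local fundamental group $\mumu_n$ of the singularity, and the rank exceeds $3$ only through the $p$-part of such $n$. Since terminal $G$-Fano threefolds form a bounded family with classified singularities, one extracts from here the prime-dependent bounds; this is precisely where the values $r\le 5$ for $p=3$ and $r\le 4$ for $p=5,7,11,13$ come from, as the fibered cases turn out to be less restrictive.

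For $\dim S\ge 1$ I use the exact sequence $1\to G_\eta\to G\to G_S\to 1$, in which $G_S$ is the image of $G$ in the automorphisms of the base and $G_\eta$ acts on the generic fiber $Y_\eta$; since all groups are elementary abelian, $r=\rk(G_\eta)+\rk(G_S)$. If $\dim S=2$ then $Y_\eta$ is a conic over $\Bbbk(S)$, so $G_\eta\hookrightarrow\PGL_2$ has rank $\le 1$ for odd $p$ and $\le 2$ for $p=2$, while $G_S$ is an abelian $p$-subgroup of $\Bir(S)\cong\Cr_2(\Bbbk)$, bounded by Theorem~\ref{theorem:Beauville}; adding gives $r\le 2+4=6$, $1+3=4$, and $1+2=3$ for $p=2$, $p=3$, $p\ge 5$. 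If $\dim S=1$ then $S\cong\PP^1$, so $G_S\hookrightarrow\PGL_2$ has rank $\le 1$ (odd $p$) or $\le 2$ ($p=2$), and $G_\eta$ acts on the del Pezzo surface $Y_\eta$ over $\Bbbk(\PP^1)$; since $Y_\eta$ is rational over the algebraic closure, $G_\eta$ embeds into $\Cr_2$ of that closure and is again bounded by Theorem~\ref{theorem:Beauville}. In both fibered cases $r$ stays within the asserted bounds.

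It remains to record sharpness where claimed. For $p\ge 17$ the diagonal subgroup $(\ZZ/p)^3\subset(\Bbbk^{\times})^3\subset\PGL_4(\Bbbk)=\Aut(\PP^3)$ realizes $r=3$. For $p=2$, taking a Klein four group $(\ZZ/2)^2$ in each factor of $\Aut((\PP^1)^3)=\PGL_2(\Bbbk)^3\rtimes\Sym_3$ produces $(\ZZ/2)^6\subset\Aut((\PP^1)^3)\subset\Cr_3(\Bbbk)$, realizing $r=6$. As noted, the serious work lies in the $G$-Fano case with a singular fixed point, where the interplay of the local fundamental groups with the classification of terminal Fano threefolds cannot be avoided.
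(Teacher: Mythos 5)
First, a point of context: the paper you were given does not prove this theorem at all --- it is quoted from \cite{Prokhorov2011a} and \cite{Prokhorov-2groups} --- so your proposal has to stand on its own. The parts of it that work are the reduction to elementary abelian subgroups, the passage to a $G$-Mori fiber space, the two fibered cases ($\dim S=1,2$, where adding the ranks of $G_\eta$ and $G_S$ and invoking Theorem~\ref{theorem:Beauville} does give bounds within the stated ones), and the sharpness examples for $p\ge 17$ and $p=2$ (the latter is exactly the group $G'\cong\mumu_2^6$ from \S\ref{section:examples}).

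The genuine gap is the case $\dim S=0$, i.e.\ the $G\Q$-Fano case, which is precisely where the prime-dependent values $6$, $5$, $4$ live (the fibered cases never force a bound above $6$, $4$, $3$). Your mechanism there --- holomorphic Lefschetz applied to a generator ``and descended through $G$'' to get a $G$-fixed point, then linearization at that point --- fails for two reasons. First, the holomorphic Lefschetz fixed-point formula applies to a single automorphism of a \emph{smooth} projective variety; here $Y$ may be singular, and even when it is smooth the formula only yields a fixed point for each cyclic subgroup, not for the whole group: the fixed loci of different generators need not meet, and there is no descent argument. Second, and decisively, the conclusion you are aiming for is simply false for small $p$: Example~\ref{example:3-group-on-P3-and-Fermat} and the adjacent examples of \S\ref{section:examples} exhibit $\mumu_3^3$ acting on the Fermat cubic surface, $\mumu_3^4$ acting on its product with $\P^1$, and a $3$-group acting on the Fermat cubic threefold, all \emph{without fixed points}. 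So an abelian $p$-group acting on a ($G$-)Fano threefold need not have any fixed point, smooth or singular, and the linearization strategy cannot even start; this is exactly why the bounds exceed $3$ for $p\le 13$. Your closing appeal to boundedness of terminal Fano threefolds and to local fundamental groups of terminal singularities is an acknowledgement of the difficulty rather than a proof, and it does not produce the specific values $6$, $5$, $4$; the actual arguments of \cite{Prokhorov2011a} and \cite{Prokhorov-2groups} go through invariant (anti)canonical divisors, the action on the class group and on cohomology, and the classification of $G$-Fano threefolds, none of which is replaced by what you wrote.
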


The goal of this paper is to prove the following refinement of
Theorem~\ref{theorem:too-large}.

\begin{theorem}\label{theorem:main}
Let $X$ be a rationally connected threefold and
$G\subset\Bir(X)$ be a $p$-subgroup. Suppose that $p\ge 17$.
Then $G$ is an abelian group
generated by at most three elements.
\end{theorem}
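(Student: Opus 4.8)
The plan is to reduce the statement to proving that $G$ is \emph{abelian}: once this is known, the bound on the number of generators is exactly the content of Theorem~\ref{theorem:elementary-p-groups} under the hypothesis $p\ge 17$. To prove abelianness I would first pass to a convenient birational model. By equivariant resolution and the $G$-equivariant Minimal Model Program, one may assume that $G$ acts regularly on a $G$-Mori fiber space $f\colon X'\to Z$, where $X'$ is a $\QQ$-factorial terminal threefold, $-K_{X'}$ is $f$-ample, and $\rho(X'/Z)^G=1$, with $Z$ rationally connected of dimension $0$, $1$, or $2$; and (by the Lefschetz principle) that $\Bbbk=\CC$. The central mechanism is the following: if $G$ fixes a \emph{smooth} point $x\in X'$, then the action on the Zariski tangent space embeds $G$ into $\GL_3(\Bbbk)$. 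Since every irreducible representation of a $p$-group has $p$-power degree, a faithful $3$-dimensional representation with $p\ge 5$ can only be a sum of characters, so $G$ is diagonalizable---in particular abelian of rank at most $3$. Thus the whole problem becomes the search for a smooth $G$-fixed point.

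When $\dim Z\ge 1$ I would produce such a point in two stages, using the exact sequence $1\to G_K\to G\to G_Z\to 1$, where $G_Z$ is the image of $G$ in $\Bir(Z)$ and $G_K$ acts trivially on $Z$. For $\dim Z=2$ the generic fiber is a conic, so $G_K$ embeds into $\PGL_2$ over the function field and is cyclic (for odd $p$), while $G_Z\subset\Cr_2(\Bbbk)$ is, for $p\ge 5$, abelian of rank at most $2$ by the two-dimensional analogue of our statement; for $\dim Z=1$ one has $Z=\PP^1$, so $G_Z\subset\PGL_2(\Bbbk)$ is cyclic and the fibers are del Pezzo surfaces. In either case $G_Z$ fixes a point $z\in Z$, the group $G$ preserves the fiber $F=f^{-1}(z)$, and the $p$-group acting on $F$ fixes a point of $F$ by the two-dimensional theory (valid for $p\ge 5$); after a small adjustment this fixed point is smooth on $X'$, and linearizing there gives abelianness already for $p\ge 5$.

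The essential and hardest case is $\dim Z=0$, so that $X'$ is a $G\QQ$-Fano threefold. Since $H^i(X',\OOO_{X'})=0$ for $i>0$, the holomorphic Lefschetz fixed point formula shows every element of $G$ has non-empty fixed locus, and a $G$-fixed point is then extracted by induction on $|G|$ using the non-trivial center of a $p$-group. The difficulty is that the fixed points may lie in $\Sing(X')$. I would therefore analyze the finite $G$-invariant set $\Sing(X')$, consisting of terminal cyclic quotient and compound Du Val points, together with the induced $G$-action on their tangent cones and local class groups. When $X'$ is Gorenstein the embedding dimensions and the Riemann--Roch constraints keep the local and global representations small enough to force abelianness for $p\ge 11$. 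For general terminal singularities one must in addition control the cyclic quotient points $\frac{1}{r}(1,-1,b)$, the orders of the local stabilizers, and the basket of singularities via orbifold Riemann--Roch; ruling out the non-abelian possibilities is what requires the sharper threshold $p\ge 17$.

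The main obstacle is precisely this singular $G\QQ$-Fano case. Two things can go wrong: $G$ may fail to fix any smooth point, so that the clean $\GL_3(\Bbbk)$ argument is unavailable; and even after passing to a singular fixed point the relevant representation sits in some $\GL_n$ with $n>3$, where neither abelianness nor the rank bound is automatic for moderate $p$. Overcoming this requires the classification of terminal $\QQ$-Fano threefolds together with a careful numerical analysis of their baskets and automorphism groups, and threading that analysis down to the optimal bound $p\ge 17$---rather than the crude $p>10368$ of Theorem~\ref{theorem:too-large}---is the crux of the argument.
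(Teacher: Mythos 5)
Your skeleton (equivariant resolution, $G$-equivariant MMP, then linearization of $G$ at a fixed point to get an abelian subgroup of $\GL_3(\Bbbk)$) is the same as the paper's, but there is a genuine gap at exactly the point your argument pivots on: you need a \emph{smooth} $G$-fixed point on the Mori fiber space model $X'$, and you have no mechanism to produce one --- "after a small adjustment this fixed point is smooth on $X'$" is not justified, and in general fixed points found on $X'$ (e.g.\ singular points of a fiber, or singular points of a $\QQ$-Fano) need not be smooth. The paper never needs smoothness downstairs: its Lemma~\ref{lemma:G-MMP} lifts \emph{any} fixed point on the final model back through every step of the MMP to the original smooth $X$, because over a fixed point each $G$-contraction and $G$-flip carries a $G$-invariant rationally connected subvariety of dimension at most $2$ (\cite[Corollaries~3.7 and~3.8]{ProkhorovShramov-RC}), where a fixed point exists by the surface and curve cases. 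Linearization is then performed on the smooth $X$ upstairs, so singular fixed points on the model cost nothing. Without this lifting step your central mechanism cannot be run, and your attempt to substitute for it --- a local analysis of tangent cones, local class groups and baskets at singular fixed points --- is not carried out and is not needed.

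The second gap is your account of where the bound $p\ge 17$ comes from. You route the non-Gorenstein $G\QQ$-Fano case through "the classification of terminal $\QQ$-Fano threefolds together with a careful numerical analysis of their baskets and automorphism groups"; no such classification exists (the paper says so explicitly in \S\ref{section:proof}), and none is needed. The paper's Proposition~\ref{proposition:non-Gorenstein} handles this case by a soft counting argument: the Kawamata--KMMT bound $\sum_P \left(r_P-\frac{1}{r_P}\right)<24$ shows a terminal Fano threefold has at most $15$ non-Gorenstein points, and a $p$-group with $p\ge 17$ cannot permute a nonempty set of at most $15$ points nontrivially, so it fixes one of them (possibly singular --- again harmless thanks to the lifting). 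This counting is precisely what produces the threshold $17$; the Gorenstein Fano case (the genus-by-genus analysis of \S\ref{section:Gorenstein}, which your sketch only gestures at) needs only $p\ge 11$, and the positive-dimensional-base case only $p\ge 5$. Your proposed inductive extraction of a fixed point "on $|G|$ using the non-trivial center" via holomorphic Lefschetz is also unsubstantiated: fixed loci of central elements can be positive-dimensional curves of positive genus, and the paper only uses Lefschetz-type arguments in special smooth situations where the group is forced to be cyclic. A minor remark: your reduction of the generation bound to Theorem~\ref{theorem:elementary-p-groups} is legitimate but superfluous, since Lemma~\ref{lemma:GL} applied at a fixed point of the smooth model gives abelianness and $\rr(G)\le 3$ simultaneously.
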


Actually, we deduce Theorem~\ref{theorem:main} from the following fact
that looks interesting on its own.
\begin{theorem}[{cf.  \cite[Theorem~4.2]{ProkhorovShramov-RC}}]
\label{theorem:fixed-point}
Let $X$ be a projective rationally connected threefold and
$G\subset\Aut(X)$ be a $p$-subgroup. Suppose that $p\ge 17$.
Then $G$ has a fixed point on $X$.
\end{theorem}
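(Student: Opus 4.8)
The plan is to reduce to a smooth model and then argue by induction on $\dim X$, running a $G$-equivariant minimal model program and feeding in the fixed-point statements for lower-dimensional objects and for Fano objects. First I would pass to a smooth model: by $G$-equivariant resolution of singularities there is a smooth projective $G$-variety $\tilde X$ with a $G$-equivariant morphism $\tilde X\to X$, and $\tilde X$ is again rationally connected; since the image of a $G$-fixed point of $\tilde X$ is a $G$-fixed point of $X$, it suffices to treat smooth $X$. The base cases of the induction are harmless: in dimension $0$ there is nothing to prove, and in dimension $1$ the only rationally connected variety is $\PP^1$, on which a $p$-subgroup of $\PGL_2(\Bbbk)$ with $p\ge 3$ is cyclic and hence fixes a point. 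The basic nonemptiness input is the holomorphic Lefschetz fixed point formula: for any $g\in G$ one has $\sum_i(-1)^i\operatorname{tr}\bigl(g^*\mid H^i(X,\OOO_X)\bigr)=1$, because $H^i(X,\OOO_X)=0$ for $i>0$ while $g$ acts trivially on $H^0(X,\OOO_X)=\Bbbk$; as the right-hand side vanishes when the fixed locus is empty, every $g\in G$ has $X^g\neq\varnothing$.

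For the inductive step I would run the $G$-equivariant MMP on $X$, obtaining a chain of divisorial contractions and flips $X=X_0\dashrightarrow\dots\dashrightarrow X_N$ ending in a $G$-Mori fiber space $X_N\to S$. The key point is that a $G$-fixed point on $X_N$ produces one on $X$ by descending induction along this chain. At a divisorial contraction $X_i\to X_{i+1}$, a $G$-fixed point $x\in X_{i+1}$ either lies off the image of the exceptional divisor, where the map is an isomorphism and $x$ lifts, or else the fiber over $x$ is a $G$-invariant rationally connected subvariety of dimension $<3$, on which the induction hypothesis supplies a $G$-fixed point. At a flip $X_i\dashrightarrow X_{i+1}$, which is an isomorphism outside the flipping and flipped curves, a $G$-fixed point either lies in the common open locus and lifts, or else the $G$-invariant flipping curve in $X_i$ is a union of rational curves on which the $p$-group $G$ (with $p\ge 17\ge 3$) fixes a point by the induction hypothesis. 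Thus everything reduces to producing a $G$-fixed point on the terminal $G$-Mori fiber space $X_N\to S$.

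When $\dim S>0$ the base $S$ is a rationally connected variety of dimension $<3$ carrying the induced $G$-action, so by the induction hypothesis $G$ fixes a point $s\in S$; the fiber $X_{N,s}$ is then a $G$-invariant rationally connected variety of dimension $1$ or $2$ (a conic, that is, a rational curve, or a del Pezzo surface), and a $G$-fixed point on it, again furnished by the lower-dimensional case of the statement, is a $G$-fixed point on $X_N$. The hard part, and the whole reason for the precise bound on $p$, is the case $\dim S=0$, when $X_N$ is a $G$-Fano threefold with terminal singularities and $\operatorname{rk}\Pic(X_N)^G=1$: here there is no fibration to induct on, and a fixed locus $X_N^g$ of a central element need no longer be rationally connected, so the cohomological argument does not by itself close the induction. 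For this case I would invoke the fixed-point results for Gorenstein $G$-Fano threefolds ($p\ge 11$) and for $G$-Mori fiber spaces ($p\ge 5$) established elsewhere in the paper, together with the boundedness of terminal Fano threefolds \cite{Birkar}, in order to reduce to a finite list of possibilities and to control the non-Gorenstein points. The hypothesis $p\ge 17$ is exactly what is needed to exclude the finitely many exceptional actions and to linearize $G$ at a smooth point of $X_N$ through its faithful action on the tangent space, in the spirit of \cite[Theorem~4.2]{ProkhorovShramov-RC}. Combining this with the descent through the MMP chain then yields the desired $G$-fixed point on the original $X$.
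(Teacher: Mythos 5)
Your skeleton is the same as the paper's: equivariant desingularization, $G$-equivariant MMP, descent of fixed points along divisorial contractions and flips through $G$-invariant rationally connected subvarieties of dimension at most two (the paper's Lemma~\ref{lemma:G-MMP}, which rests on \cite[Corollaries~3.7 and~3.8]{ProkhorovShramov-RC}), the case of a $G$-Mori fiber space over a positive-dimensional base by induction on dimension (Corollary~\ref{corollary:MFS-dim-3}), and the Gorenstein $G$-Fano case by Corollary~\ref{corollary:Gorenstein-Fano-3-folds}. Up to that point your argument is correct.

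The genuine gap is in the remaining case: a terminal $G\Q$-Fano threefold with at least one non-Gorenstein point, which is precisely where the constant $17$ enters. You propose to handle it via the boundedness of terminal Fano threefolds \cite{Birkar}, ``reducing to a finite list of possibilities'' and ``excluding the finitely many exceptional actions''. Boundedness only yields the existence of \emph{some} bound, i.e.\ the statement for all sufficiently large $p$; this is exactly the content of the much weaker Theorem~\ref{theorem:too-large}, and it cannot produce the explicit constant $17$ (nor is there any finite list of exceptional actions to inspect: terminal $\Q$-Fano threefolds form a bounded but huge and unclassified family). The paper's actual argument here (Proposition~\ref{proposition:non-Gorenstein}) is an explicit numerical one: terminal singularities are isolated, and by \cite{Kawamata-1992bF}, \cite{KMMT-2000} one has $\sum_{P}\bigl(r_P-\frac{1}{r_P}\bigr)<24$, where $r_P$ is the Gorenstein index of the point $P$; since each non-Gorenstein point contributes at least $2-\frac{1}{2}=\frac{3}{2}$, there are at most $15$ such points, and a $p$-group with $p\ge 17$ acting on a nonempty set of at most $15$ points must fix one of them, as all orbits have $p$-power size. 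Without this (or an equally explicit) input, your proof establishes the theorem only for unspecified large $p$. A secondary slip: your claim that $p\ge 17$ is needed ``to linearize $G$ at a smooth point through its faithful action on the tangent space'' is misplaced --- linearization at a fixed point (Remark~\ref{remark:Aut-P}) holds for any finite group once a fixed point exists, and it is used in deducing Theorem~\ref{theorem:main} from the fixed-point statement, not in producing the fixed point itself.
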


In course of the proof of Theorem~\ref{theorem:main}, we also establish the following
result in dimension~$2$ (this can be deduced from the classification
of finite subgroups of $\Cr_2(\Bbbk)$ obtained by I.~Dolgachev
and V.~Iskovskikh in~\cite{Dolgachev-Iskovskikh}, but our proof is
short and self-contained).

\begin{proposition}\label{proposition:Cr2}
Let $S$ be a rational surface, and $G\subset\Cr_2(\Bbbk)$ be a $p$-subgroup.
Suppose that $p\ge 5$. Then $G$ is an abelian group generated by at most two elements.
\end{proposition}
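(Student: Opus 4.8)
The plan is to reduce a birational $p$-action on the rational surface to a biregular action on a minimal model and then linearize at a fixed point, the whole argument resting on one representation-theoretic observation. For $p\ge 5$, every finite $p$-subgroup of $\GL_2(\Bbbk)$ and of $\PGL_3(\Bbbk)$ is abelian and generated by at most two elements, and every finite $p$-subgroup of $\PGL_2(\Bbbk)$ is cyclic. Indeed, the irreducible representations of a $p$-group have dimension a power of $p$; since $p>3$, every such dimension that is at most $3$ equals $1$, so a faithful representation of a $p$-group in $\GL_2$, or in $\GL_3$ (the latter obtained by lifting a projective representation through a central $p$-extension, the Schur multiplier being a $p$-group), is a direct sum of characters and hence diagonalizable. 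A finite diagonalizable group lies in a two-dimensional torus, so it is abelian of rank at most two, and a nontrivial cyclic $p$-subgroup of $\PGL_2$ has exactly two fixed points on $\PP^1$. Consequently it suffices to exhibit a smooth $G$-fixed point $x\in S$ on a smooth model: the tangent representation $G\hookrightarrow\GL(T_xS)=\GL_2(\Bbbk)$ is faithful (a finite-order automorphism fixing $x$ and acting trivially on $T_xS$ is linearizable, hence trivial), and then the observation above applies.

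First I would regularize: any finite subgroup of $\Cr_2(\Bbbk)=\Bir(\PP^2)$ can be realized as a group of biregular automorphisms of a smooth projective rational surface. Running the $G$-equivariant minimal model program, I obtain a smooth $G$-minimal rational surface $S$ which is either a del Pezzo surface with $\Pic(S)^G\cong\ZZ$, or a conic bundle $S\to\PP^1$ with $\Pic(S)^G\cong\ZZ^2$. For the conic bundle I would use the homomorphism $G\to\Aut(\PP^1)=\PGL_2(\Bbbk)$: its image is a cyclic $p$-group, so it fixes a point $t_0\in\PP^1$, and the fiber $F$ over $t_0$ is $G$-invariant. If $F\cong\PP^1$ is smooth, the image of $G$ in $\Aut(F)=\PGL_2(\Bbbk)$ is again cyclic, and its two fixed points on $F$ are fixed by $G$. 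If $F=L_1+L_2$ is degenerate, then, since a $p$-group with $p$ odd has no quotient of order two, $G$ fixes each component and hence fixes the node $L_1\cap L_2$, a smooth point of $S$. Either way $G$ has a fixed point, and the proof is finished by linearization.

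For the del Pezzo case I would split according to the degree $d=K_S^2$. If $S=\PP^2$, then $G\subset\PGL_3(\Bbbk)$ is treated directly by the observation above (and fixes a coordinate point). If $d=8$, so $S\cong\PP^1\times\PP^1$, then $\Pic(S)^G\cong\ZZ$ would force $G$ to interchange the two rulings, which is impossible for an odd $p$-group; hence $G\subset\PGL_2\times\PGL_2$ is contained in a product of cyclic groups. If $d=6$, the finite part $S_3\times\ZZ/2$ of the automorphism group has order prime to $p$, so $G$ lands in the torus. If $d=1$, the base point of $|{-}K_S|$ is a smooth $G$-fixed point, and we linearize. For $2\le d\le 5$ the group $\Aut(S)$ is finite, its order being divisible only by the primes dividing the Weyl group $W(E_{9-d})$, whose $p$-Sylow subgroups are cyclic for $p\ge 5$; hence $G$ is cyclic. (The value $d=7$ does not occur, as such a surface carries a $G$-invariant $(-1)$-curve and admits no $G$-minimal del Pezzo structure.) In every case $G$ is abelian and generated by at most two elements.

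The main obstacle is the del Pezzo case: making the degree-by-degree analysis genuinely self-contained requires the structure of automorphism groups of low-degree del Pezzo surfaces, equivalently control of the $p$-elements of $W(E_{9-d})$, together with a careful check of the minimality constraints so that no surface secretly admitting a larger $p$-subgroup is overlooked (the exceptional role of $W(E_8)$, which contains $(\ZZ/5)^2$, being precisely what makes the fixed base point of $|{-}K_S|$ on a degree-one surface indispensable). By contrast the conic bundle step is robust, provided one handles the degenerate-fiber subcase.
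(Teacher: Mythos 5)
Your proposal is correct, but it takes a genuinely different route through the key steps, so a comparison is worthwhile. The paper deduces Proposition~\ref{proposition:Cr2} from the stronger statement that $G$ has a fixed point on \emph{every} smooth projective rational surface on which it acts (Proposition~\ref{proposition:Cr2-fixed-point}): after running the $G$-MMP it finds a fixed point on the minimal model and then lifts it back through the MMP steps via $G$-invariant rationally connected subvarieties (Lemma~\ref{lemma:G-MMP}). You skip the lifting entirely, observing that for the abstract group-theoretic conclusion a fixed point (or a direct structure argument) on the minimal model suffices; this is a legitimate economy here, though it would not recover Proposition~\ref{proposition:Cr2-fixed-point}, which the paper needs later for the threefold results (Corollary~\ref{corollary:MFS-dim-3}). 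The bigger divergence is in the del Pezzo case: the paper's Lemma~\ref{lemma:dP} makes no use of $G$-minimality --- for $3\le d\le 8$ it counts $(-1)$-curves (the count is coprime to $p$ except for $(p,d)=(5,5)$, where $\Aut(S)\cong\mathfrak{S}_5$ forces $G$ to be cyclic and the holomorphic Lefschetz formula applies), and it treats $d=2$ by the anticanonical double cover and $d=1$ by the base point of $|-K_S|$. You instead use minimality together with the embedding $\Aut(S)\hookrightarrow W(E_{9-d})$ for $2\le d\le 5$ and the torus part of $\Aut(S)$ for $d=6$; this is correct (the $p$-Sylow subgroups of $W(A_4)$, $W(D_5)$, $W(E_6)$, $W(E_7)$ are indeed cyclic for $p\ge 5$, the genuine exception $W(E_8)\supset\mumu_5^2$ being dodged by your $d=1$ argument), but it leans on the classification of automorphism groups of low-degree del Pezzo surfaces, which the paper's curve count avoids; in exchange, your version concludes the group structure directly even in cases where no fixed point is exhibited. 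Two minor repairs: the extension $\SL_3(\Bbbk)\to\PGL_3(\Bbbk)$ is central with kernel $\mumu_3$, not a $p$-group, so ``lifting through a central $p$-extension'' is imprecise as stated --- the standard fix, used in the paper's Lemma~\ref{lemma:PGL}, is to take a Sylow $p$-subgroup of the preimage, which still surjects onto $G$ and is honestly linear; and in degree $8$ you should note that $\FF_1$ is excluded, which your minimality hypothesis handles implicitly since its exceptional curve forces $\rk\Pic(S)^G\ge 2$.
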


Similarly to the case of Theorem~\ref{theorem:main}, we derive Proposition~\ref{proposition:Cr2}
from the following fact.

\begin{proposition}\label{proposition:Cr2-fixed-point}
Let $S$ be a projective rational surface, and $G\subset\Aut(S)$ be a $p$-group.
Suppose that $p\ge 5$. Then $G$ has a fixed point on $S$.
\end{proposition}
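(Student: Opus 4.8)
The plan is to pass to a $G$-minimal model and then settle the resulting surfaces using the classification of $G$-minimal rational surfaces, an Euler-characteristic congruence modulo $p$, and a short analysis of the action on the Picard group.

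\emph{Step 1: reduction to the $G$-minimal case.} Running a $G$-equivariant minimal model program, I obtain a birational $G$-morphism $\phi\colon S\to S'$ onto a $G$-minimal smooth rational surface $S'$, and I claim it suffices to produce a fixed point on $S'$. Indeed, $\phi$ is a composition of contractions, each contracting a $G$-stable collection of pairwise disjoint $(-1)$-curves, distinct curves going to distinct points. Given $q\in (S')^G$, either $\phi$ is a local isomorphism over $q$, so that $\phi^{-1}(q)$ is a $G$-fixed point, or $q$ is the image of a contracted curve $E$; in the latter case, for every $g\in G$ the curve $g(E)$ is contracted to $g(q)=q$, whence $g(E)=E$, so $E\cong\PP^1$ is $G$-invariant. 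The image of $G$ in $\Aut(E)=\PGL_2(\Bbbk)$ is a $p$-group, hence cyclic for $p\ge5$, and so has a fixed point on $E$. Iterating, a fixed point on $S'$ lifts to one on $S$, and I may assume $S=S'$.

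\emph{Step 2: the tools.} Recall that a $G$-minimal rational surface is either a del Pezzo surface with $\Pic(S)^G\cong\ZZ$, or a conic bundle $\pi\colon S\to\PP^1$ with $\Pic(S)^G\cong\ZZ^2$. I will also use the congruence $\chi(S^G)\equiv\chi(S)\pmod p$ for the topological Euler characteristic of a $p$-group action (valid over $\Bbbk$ by the Lefschetz principle, or via $\ell$-adic Euler characteristics); it follows by stratifying $S$ according to stabilizers, since every non-fixed stratum fibres in $G$-orbits of size divisible by $p$ and thus contributes a multiple of $p$. In particular, if $p\nmid\chi(S)$ then $S^G\neq\varnothing$.

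\emph{Step 3: conic bundles.} Here $G$ acts on the base $\PP^1$; being a cyclic $p$-group in $\PGL_2(\Bbbk)$, it fixes a point $b$, and the fibre $F=\pi^{-1}(b)$ is $G$-invariant. If $F\cong\PP^1$, then $G$ fixes a point of $F$ exactly as above; if $F$ is a union of two lines, its singular point is intrinsic, hence $G$-fixed. Either way $G$ has a fixed point.

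\emph{Step 4: del Pezzo surfaces, and the main obstacle.} For a del Pezzo surface of degree $d$ one has $\chi(S)=2+\rk\Pic(S)=12-d$, so Step 2 settles every case with $p\nmid 12-d$, including $S=\PP^2$. It remains to treat $p\mid 12-d$ with $d\le 8$. Then $\Pic(S)^G$ has rank one while $\rk\Pic(S)=10-d\ge2$, so $G$ acts nontrivially on $\Pic(S)$ through the Weyl group $W(\mathrm{R}_d)$, forcing $p\mid|W(\mathrm{R}_d)|$. Matching the three constraints $p\ge5$, $p\mid 12-d$ and $p\mid|W(\mathrm{R}_d)|$ leaves only the pair $(d,p)=(2,5)$. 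Since the Sylow $5$-subgroups of $W(\mathrm{E}_7)$ are cyclic of order $5$, the image of $G$ there is generated by a single element of order $5$; acting on the rank-$7$ root space it must have a nonzero invariant subspace, because its nontrivial eigenvalues occur in Galois orbits of size $\deg\Phi_5=4$ and $4\nmid 7$. This forces $\rk\Pic(S)^G\ge2$, contradicting $G$-minimality, so the exceptional pair does not occur. The delicate point is precisely this exclusion of $(d,p)=(2,5)$ through the structure of $W(\mathrm{E}_7)$, together with making the Euler-characteristic congruence rigorous over $\Bbbk$; Steps 1 and 3 are comparatively routine.
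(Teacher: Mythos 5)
Your proof is correct, and the way you settle the decisive del Pezzo case is genuinely different from the paper's. Your Steps 1 and 3 parallel the paper: the paper lifts fixed points along the $G$-MMP via Lemma~\ref{lemma:G-MMP} (quoting equivariant results of \cite{ProkhorovShramov-RC}, which your direct analysis of contracted $(-1)$-curves replaces in dimension $2$), and it treats the conic bundle case exactly as you do, via a fixed point on the base and then on the invariant fibre (Corollary~\ref{corollary:MFS} plus Corollary~\ref{corollary:rational-curve}/Corollary~\ref{corollary:conic}). The divergence is in Step 4. The paper's Lemma~\ref{lemma:dP} works degree by degree on an \emph{arbitrary} smooth del Pezzo surface: the base point of $|-K_S|$ for $d=1$, the anticanonical double cover of $\P^2$ for $d=2$, counting $(-1)$-curves for $3\le d\le 8$, and, in the single residual case $p=d=5$, the identification $\Aut(S)\cong\mathfrak{S}_5$ together with the holomorphic Lefschetz fixed-point formula. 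You instead exploit $G$-minimality, which the paper never uses at this step: the congruence $\chi(S^G)\equiv\chi(S)=12-d \pmod p$ disposes of everything except $p\mid 12-d$, i.e. $(d,p)\in\{(1,11),(2,5),(5,7),(7,5)\}$; the constraint $p\mid |W(\mathrm{R}_d)|$ coming from the nontrivial action on $\Pic(S)$ eliminates all but $(2,5)$ (I checked the numerics: $11\nmid |W(\mathrm{E}_8)|$, $7\nmid |W(\mathrm{A}_4)|=120$, $5\nmid |W(\mathrm{A}_1)|=2$, while the $5$-part of $|W(\mathrm{E}_7)|$ is exactly $5$; for odd $p$ the possible extra factor of $\pm 1$ or a diagram automorphism in the full isometry group of $K_S^\perp$ is harmless); and your Galois-orbit eigenvalue count in $\mathrm{E}_7$ is right --- an order-$5$ element fixes a $3$-dimensional subspace of the $7$-dimensional reflection representation, so $\rk\Pic(S)^G\ge 4>1$, contradicting minimality. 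Your route buys uniformity: no need for the automorphism group of the quintic del Pezzo surface, the structure of $|-K_S|$ in low degree, or holomorphic Lefschetz; the price is the Smith-type Euler characteristic congruence and some lattice theory, and the conclusion only covers $G$-minimal del Pezzo surfaces, whereas Lemma~\ref{lemma:dP} covers all of them (that extra generality is not needed for the proposition).

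One small patch is required: the proposition allows $S$ to be singular, and the paper does apply it to singular surfaces (in Lemma~\ref{lemma:invariant-member}, to the normalization $S'$, which has log terminal singularities), while your Step 1 tacitly assumes $S$ smooth, since contracting $(-1)$-curves only makes sense on a smooth surface. As in the paper, simply begin by replacing $S$ with its minimal resolution --- which is $G$-equivariant by uniqueness --- produce a fixed point there, and push it forward under the resolution morphism.
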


According to \cite[Theorem~4.2]{ProkhorovShramov-RC} and~\cite[Theorem~1.1]{Birkar},
a generalization of Proposition~\ref{proposition:Cr2-fixed-point} and Theorem~\ref{theorem:fixed-point}
holds in any dimension; however, obtaining an explicit bound here
seems to be out of reach for our techniques.

At the moment we are not aware
of any examples of $p$-groups violating the assertion of Theorem~\ref{theorem:main}
for~\mbox{$p\ge 5$}. This makes us suspect that the bound provided
by Theorem~\ref{theorem:main} may be improved.
Also, the following question looks interesting.

\begin{question}
Let $X$ be a projective rationally connected variety of dimension $r$, and $G\subset\Aut(X)$
be a $p$-subgroup that cannot be generated by less than~$r$ elements.
Suppose that $p$ is sufficiently large.
Is it true that $X$ is rational?
Is it true that $G$ is contained in an $r$-dimensional torus acting on~$X$?
\end{question}

\smallskip
The plan of the paper is as follows.
In~\S\ref{section:examples}
we consider several easy examples of ``large'' $2$- and $3$-groups acting on rational surfaces
and rationally connected threefolds. In~\S\ref{section:linear-algebra}
we collect auxiliary facts about linear representations of $p$-groups.
In~\S\ref{section:extremal}
we discuss lifting properties for fixed points of $p$-groups
along extremal contractions following~\cite[\S3]{ProkhorovShramov-RC}.
In~\S\ref{section:surfaces}
we study fixed points of $p$-groups on rational surfaces, and
prove Propositions~\ref{proposition:Cr2-fixed-point} and~\ref{proposition:Cr2};
as a consequence, we prove Corollary~\ref{corollary:MFS-dim-3}, that is a more precise analog
of Theorem~\ref{theorem:main} in this case.
In~\S\ref{section:anticanonical}
we describe invariant anticanonical divisors for $p$-groups
acting on Fano threefolds, and prove Corollary~\ref{corollary:Gorenstein-Fano-3-folds},
that is a more precise analog of Theorem~\ref{theorem:main} in this case. In~\S\ref{section:Gorenstein}
we study fixed points of $p$-groups on Gorenstein Fano threefolds.
In~\S\ref{section:proof}
we prove Theorems~\ref{theorem:fixed-point} and~\ref{theorem:main}
and make concluding remarks.

Throughout the paper we work over an algebraically closed field
$\Bbbk$ of characteristic~$0$. All varieties are assumed to be projective.
By $\mumu_n$ we denote the cyclic group of order~$n$.
We will always denote by $p$ some prime number.
For a finite $p$-group $G$ we denote the minimal number
of its generators by~\mbox{$\rr(G)$}.

We are grateful to J.-P.~Serre for his interest in our work, and to
A.~Kuznetsov and L. Rybnikov for useful discussions.

\section{Examples}
\label{section:examples}

For an arbitrary integer $n$, the group $\GL_r(\Bbbk)\subset\Cr_r(\Bbbk)$
contains a subgroup isomorphic to $\mumu_n^r$. In this section we give several examples
of more complicated $p$-subgroups of Cremona groups for~\mbox{$p=2$} and~\mbox{$p=3$}.

Recall that an \emph{elementary} abelian $p$-group is a group isomorphic to
$\mumu_p^r$ for some $r$.

\begin{remark}\label{remark-Frattini}
Let $G$ be a finite $p$-group and let
$\Phi(G)$ be its Frattini subgroup, that is, the intersection of all maximal
proper subgroups of $G$.
Then~\mbox{$G/\Phi(G)$} is an elementary abelian $p$-group.
By Burnside's basis theorem~\mbox{\cite[Theorem~12.2.1]{Hall1976}} we have $\rr(G)=\rr(G/\Phi(G))$.
\end{remark}

\begin{example}
Let $\mathfrak Q\cong \mumu_2\times\mumu_2$ be the subgroup
of $\Aut(\P^1)\cong\PGL_2(\Bbbk)$
generated by the matrices
\[
\begin{pmatrix}
1& \phantom{-}0 \\ 0& - 1
\end{pmatrix},
\qquad
\begin{pmatrix}
0&1\\ 1&0
\end{pmatrix}.
\]
Then $\Aut(\P^1\times\P^1\times\P^1)$ contains a non-abelian subgroup
$$
G=(\mathfrak Q\times\mathfrak Q\times\mathfrak Q)\rtimes \mumu_2,
$$
where $\mumu_2$ acts on
$\P^1\times\P^1\times\P^1$ by permutation of the first two factors.
Using Remark~\ref{remark-Frattini}, one can compute that $\rr(G)=5$.
Note that for the
group~\mbox{$G'=\mathfrak Q\times\mathfrak Q\times\mathfrak Q$}
we have $G'\cong\mumu_2^6$, so that $\rr(G')>\rr(G)$.
The groups~$G$ and~$G'$ have no fixed points on~\mbox{$\P^1\times\P^1\times\P^1$}.
\end{example}

\begin{example}\label{example:3-group-on-P3-and-Fermat}
Let $\Gamma'\subset\PGL_4(\Bbbk)$ be the group that is generated by multiplications of the
homogeneous coordinates
by cubic roots of $1$, and~\mbox{$\Gamma\subset\PGL_4(\Bbbk)$} be the group
generated by $\Gamma'$ and a cyclic permutation of the first three homogeneous coordinates.
Then $\Gamma'\cong\mumu_3^3$, while $\Gamma$ is a non-abelian
group with $\rr(\Gamma)=2$.
Let $X\subset\P^3$ be the Fermat cubic surface, i.e., the cubic surface
given by equation
$$
x^3+y^3+z^3+w^3=0.
$$
Then $X$ is invariant with respect to the group $\Gamma$.
The groups~$\Gamma$ and~$\Gamma'$ have no fixed points on~$X$.
A rational variety
$X\times\P^1$ is acted on by a non-abelian $3$-group
$\Gamma\times\mumu_3$ with $\rr(\Gamma\times\mumu_3)=3$,
and also by its subgroup $\Gamma'\times\mumu_3\cong\mumu_3^4$.
The groups~\mbox{$\Gamma\times\mumu_3$} and~\mbox{$\Gamma'\times\mumu_3$} have no fixed points on~\mbox{$X\times\P^1$}.
\end{example}

\begin{example}
Similarly to Example~\ref{example:3-group-on-P3-and-Fermat}, consider
a Fermat cubic threefold~$Y$ in $\P^4$. Then $Y$ is a smooth non-rational
rationally connected threefold, and there is an action of the group
$\Gamma\times\mumu_3$ on~$Y$.
The group~\mbox{$\Gamma\times\mumu_3$} has no fixed points on~$Y$.
\end{example}

\section{Representations}
\label{section:linear-algebra}

In this section we collect auxiliary assertions about linear and projective representations of $p$-groups.
The following facts are easy and well-known.

\begin{lemma}
\label{lemma:basic}
Let $G$ be a $p$-group.
Then the following assertions hold.
\begin{itemize}
\item[(i)]
For any $k$ such that $p^k$ divides $|G|$ the group
$G$ contains a subgroup of order $p^k$.

\item[(ii)] Any non-trivial normal subgroup of $G$ has a non-trivial intersection
with the center of $G$.

\item[(iii)] Let $V$ be a representation
of $G$ defined over~$\QQ$ such that $G$ acts non-trivially in~$V$.
Then~\mbox{$\dim V\ge p-1$}.

\item[(iv)] Suppose that $G$ is non-abelian.
Let $V$ be a $G$-representation such that the center of $G$
acts faithfully on $V$.
Then $V$ contains an irreducible $G$-subrepresentation $W$
such that $\dim W$ is divisible by~$p$.

\item[(v)] Suppose that $G$ is non-abelian, and~\mbox{$p\ge 3$}.
Let $V$ be a faithful representation of a $G$ defined over~$\QQ$.
Then~\mbox{$\dim V\ge 2p$}.
\end{itemize}
\end{lemma}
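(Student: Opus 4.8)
\textbf{Parts (i) and (ii)} are standard structural facts, and the plan is to prove them by the usual counting arguments. For (i) I would induct on $|G|=p^n$: a nontrivial $p$-group has nontrivial center, so I pick a central element of order $p$ generating a normal subgroup $Z_0\cong\mumu_p$; by induction $G/Z_0$ has a subgroup of order $p^{k-1}$, and its preimage in $G$ has order $p^k$. For (ii), let $N$ be a nontrivial normal subgroup and let $G$ act on $N$ by conjugation. The fixed locus is exactly $N\cap Z(G)$, while every nontrivial orbit has size divisible by $p$; hence $|N\cap Z(G)|\equiv|N|\equiv 0\pmod p$, and since the identity lies in $N\cap Z(G)$ this intersection contains a further nontrivial element.

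For part (iii) the point is that a nontrivial $\QQ$-representation forces a cyclotomic field of large degree to appear. I would pick $g\in G$ acting nontrivially; then $g$ has order $p^a$ with $a\ge 1$, and over $\CC$ it has an eigenvalue $\zeta$ that is a primitive $p^b$-th root of unity for some $1\le b\le a$. Since $V$ is defined over $\QQ$, the characteristic polynomial of $g$ on $V$ lies in $\QQ[t]$ and is therefore divisible by the $p^b$-th cyclotomic polynomial, of degree $\varphi(p^b)=p^{b-1}(p-1)\ge p-1$. Thus $\dim V\ge p-1$.

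Part (iv) rests on the classical fact that every irreducible representation of a $p$-group over an algebraically closed field of characteristic $0$ has dimension a power of $p$. I would decompose $V$ into irreducible subrepresentations $W_i$ and argue by contradiction: if no $\dim W_i$ were divisible by $p$, then each $W_i$ would be one-dimensional, so the commutator subgroup $[G,G]$ would act trivially on all of $V$. As $G$ is non-abelian, $[G,G]$ is a nontrivial normal subgroup, so by part (ii) it meets $Z(G)$ nontrivially; but then a nontrivial central element acts trivially on $V$, contradicting the faithfulness of the central action. Hence some $W_i$ has dimension divisible by $p$.

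Finally, part (v) is where the real work lies, and I would deduce it from (iv) together with a self-conjugacy argument. Applying the argument of (iv) over $\CC$ to $V\otimes_{\QQ}\CC$ (on which the center still acts faithfully) yields an irreducible summand $W$ with $p\mid\dim W$, so $\dim_{\CC}W\ge p$. The crux is then to show that $W$ contributes a second, independent block of the same size. Here I would invoke that a group of odd order has only the trivial real-valued irreducible character, so for $p\ge 3$ the nontrivial irreducible $W$ is not isomorphic to its complex conjugate $\ov W$. Since $V$ is defined over $\QQ$, hence over $\RR$, the representation $V\otimes_{\QQ}\CC$ is isomorphic to its own conjugate, so $\ov W$ occurs with the same positive multiplicity as $W$. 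As $W$ and $\ov W$ are non-isomorphic of dimension $\ge p$ each, we obtain $\dim_{\QQ}V=\dim_{\CC}(V\otimes_{\QQ}\CC)\ge\dim W+\dim\ov W\ge 2p$. I expect the self-conjugacy input for odd-order groups to be the only non-routine ingredient; everything else is bookkeeping with characters and the Galois (complex-conjugation) action.
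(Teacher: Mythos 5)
The paper offers no proof of this lemma at all: it is stated as a collection of ``easy and well-known'' facts, so there is no argument of the authors' to compare yours against. Judged on its own merits, your proposal is correct in all five parts, and each argument is the standard one: induction through the center for (i), the conjugation orbit count for (ii), divisibility of the characteristic polynomial by a cyclotomic polynomial for (iii), and the $p$-power dimension theorem combined with (ii) for (iv). For (v), your route via Burnside's theorem (a group of odd order has no nontrivial real-valued irreducible character, forcing $W\not\cong\ov{W}$ and hence two distinct constituents of dimension $\ge p$ in the self-conjugate representation $V\otimes_{\QQ}\CC$) is sound and gives exactly the claimed bound $2p$. A worthwhile remark: an alternative classical argument replaces Burnside by the Galois action on characters --- a constituent $W$ from (iv) on which a central element of order $p$ acts nontrivially has character field containing $\QQ(\zeta_p)$, so its Galois orbit consists of at least $p-1$ pairwise non-isomorphic constituents of $V\otimes_{\QQ}\CC$, each of dimension $\ge p$, yielding the stronger bound $\dim V\ge p(p-1)$, which is sharp (e.g.\ for the Heisenberg group of order $p^3$) and implies $\dim V \ge 2p$ for $p\ge 3$. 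Either way, the lemma holds as stated.
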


\begin{remark}\label{remark:detskiy-sad}
Let $G\subset\GL_n(\Bbbk)$ be a finite abelian group.
Then its elements are simultaneously
diagonalizable, so that $G$ is generated by at most $n$ elements.
\end{remark}

\begin{lemma}\label{lemma:GL}
Let $G\subset\GL_n(\Bbbk)$ be a $p$-group. Suppose that~\mbox{$p>n$}.
Then $G$ is abelian and $\rr(G)\le n$.
\end{lemma}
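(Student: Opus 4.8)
The plan is to establish abelianness first by a contradiction argument and then read off the generator bound from simultaneous diagonalization. Set $V=\Bbbk^n$, so that the inclusion $G\subset\GL_n(\Bbbk)=\GL(V)$ exhibits $V$ as a \emph{faithful} $G$-representation. The first point I would record is that, since $G\hookrightarrow\GL(V)$ is injective, its restriction to any subgroup is injective as well; in particular the center $Z(G)$ acts faithfully on $V$. This is precisely the hypothesis required to invoke Lemma~\ref{lemma:basic}(iv), and crucially it comes for free from the ambient embedding, with no need to pass to a quotient.

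Suppose then, for contradiction, that $G$ is non-abelian. Applying Lemma~\ref{lemma:basic}(iv) to the faithful $G$-representation $V$ (whose center acts faithfully by the previous observation), we obtain an irreducible $G$-subrepresentation $W\subset V$ whose dimension is divisible by $p$. On the other hand we have $1\le\dim W\le\dim V=n<p$, so $\dim W$ cannot be a positive multiple of $p$. This contradiction forces $G$ to be abelian.

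Once $G$ is known to be abelian, the bound follows immediately from Remark~\ref{remark:detskiy-sad}: a finite abelian subgroup of $\GL_n(\Bbbk)$ is simultaneously diagonalizable, hence generated by at most $n$ elements, giving $\rr(G)\le n$.

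There is no genuinely hard step here; the entire argument is a short deduction from the representation-theoretic machinery already assembled in Lemma~\ref{lemma:basic}. The only subtlety worth flagging is the faithfulness of the center action, which must be noted in order to legitimately apply part~(iv). After that, the hypothesis $p>n$ does all the remaining work by ruling out any irreducible constituent of dimension divisible by $p$, and the argument applies uniformly to every prime $p$.
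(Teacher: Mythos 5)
Your proof is correct and takes essentially the same route as the paper's: both argue that if $G$ were non-abelian, Lemma~\ref{lemma:basic}(iv) would force the natural $n$-dimensional representation to contain an irreducible subrepresentation of dimension divisible by $p>n$ (impossible), and then obtain $\rr(G)\le n$ from simultaneous diagonalization as in Remark~\ref{remark:detskiy-sad}. Your explicit check that the center acts faithfully (the hypothesis of Lemma~\ref{lemma:basic}(iv), automatic here since $G\subset\GL(V)$ acts faithfully) is a detail the paper leaves implicit, but the argument is the same.
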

\begin{proof}
Suppose that the group $G$ is not abelian.
By Lemma~\ref{lemma:basic}(iv)
the natural $n$-dimensional $G$-representation $V$
has a subrepresentation whose dimension is divisible
by $p>n$,
which is a contradiction.
Therefore, $G$ is an abelian group, so that $\rr(G)\le n$ by Remark~\ref{remark:detskiy-sad}.
\end{proof}

Denote by $T_P(U)$ the Zariski tangent space
to a variety $U$ at a point~\mbox{$P\in U$}.

\begin{remark}
\label{remark:Aut-P}
Let $U$ be an irreducible variety acted on by a finite group~$G$.
Suppose that $G$ has a fixed point $P\in U$.
Then there is an embedding
$$G\hookrightarrow \GL\big(T_P(U)\big),$$
see e.\,g.~\cite[Lemma~2.4]{Bialynicki-Birula1973}, \cite[Lemma~4]{Popov-Jordan}.
\end{remark}

\begin{lemma}\label{lemma:PGL}
Let $G\subset\Aut(\P^{n-1})\cong\PGL_n(\Bbbk)$ be a $p$-group.
Suppose that~\mbox{$p>n$}. Then $G$ has a fixed point on $\P^{n-1}$.
\end{lemma}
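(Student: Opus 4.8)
The plan is to reduce the projective statement to the linear case already handled in Lemma~\ref{lemma:GL} by producing a genuine linear lift of $G$. A fixed point of $G$ on $\P^{n-1}$ is precisely a one-dimensional subspace of $\Bbbk^n$ preserved by some lift of $G$, so it suffices to find a subgroup of $\GL_n(\Bbbk)$ isomorphic to $G$, mapping onto $G$ under $\GL_n(\Bbbk)\to\PGL_n(\Bbbk)$, and possessing a common eigenvector.

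First I would pass to $\SL_n(\Bbbk)$. The surjection $\SL_n(\Bbbk)\to\PGL_n(\Bbbk)$ has kernel the group $\mumu_n$ of scalar matrices $\zeta I$ with $\zeta^n=1$, where surjectivity uses that $\Bbbk$ is algebraically closed. Let $\tilde G\subset\SL_n(\Bbbk)$ be the preimage of $G$; it is a finite central extension
\[
1\longrightarrow \mumu_n\longrightarrow \tilde G\longrightarrow G\longrightarrow 1.
\]
The essential point is the hypothesis $p>n$: since $p>n$ the order of $\mumu_n$ is coprime to $p$, so the $p$-part of $|\tilde G|$ equals $|G|$. Choose a Sylow $p$-subgroup $P\subset\tilde G$. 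Then $P\cap\mumu_n$ is trivial, because its order divides both $|P|$ and $|\mumu_n|$, whence the projection $\tilde G\to G$ restricts to an isomorphism $P\xrightarrow{\sim}G$. Thus $P\subset\GL_n(\Bbbk)$ is a $p$-group with $p>n$ that lifts the action of $G$.

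Now Lemma~\ref{lemma:GL} applies to $P$: being a $p$-group in $\GL_n(\Bbbk)$ with $p>n$, it is abelian. By Remark~\ref{remark:detskiy-sad} the elements of $P$ are simultaneously diagonalizable, so they share a common eigenvector $v\in\Bbbk^n$. The line $\Bbbk v$ descends to a point of $\P^{n-1}$ fixed by $P$, and since $P$ maps isomorphically onto $G$ while the $G$-action on $\P^{n-1}$ factors through this identification, the point is fixed by $G$, as required.

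The only real subtlety is the lifting step, and it is exactly where $p>n$ enters: one needs the kernel $\mumu_n$ to have order prime to $p$ so that a Sylow $p$-subgroup of $\tilde G$ gives a section of the extension over $G$. I expect this to be routine, and once the lift $P$ is in hand the conclusion is immediate from the already-established linear case.
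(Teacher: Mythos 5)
Your proposal is correct and follows essentially the same route as the paper: pass to the preimage in $\SL_n(\Bbbk)$, take a Sylow $p$-subgroup, invoke Lemma~\ref{lemma:GL} to conclude it is abelian, and obtain the fixed point from a one-dimensional subrepresentation. The only difference is cosmetic — you verify explicitly that the Sylow subgroup meets the kernel $\mumu_n$ trivially and hence maps isomorphically onto $G$, whereas the paper only records the (sufficient) fact that it surjects onto $G$.
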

\begin{proof}
Consider the natural projection $\theta\colon\SL_n(\Bbbk)\to\PGL_n(\Bbbk)$,
and let $\tilde{G}$ be the preimage of $G$ with respect to~$\theta$.
Let $\bar{G}$ be a Sylow $p$-subgroup of~$\tilde{G}$.
We have $\theta(\bar{G})=G$.
On the other hand, $\bar{G}$ is an abelian
group by Lemma~\ref{lemma:GL}. Thus $\bar{G}$ has a one-dimensional
subrepresentation in its natural $n$-dimensional representation,
which means that $G$ has a fixed point in $\P^{n-1}$.
By Remark~\ref{remark:Aut-P} this implies that
$G$ is a subgroup of $\GL_{n-1}(\Bbbk)$, so that
the remaining assertions follow from Lemma~\ref{lemma:GL}.
\end{proof}

\begin{corollary}
\label{corollary:rational-curve}
Let $G$ be a $p$-group acting faithfully on a rational curve $C$.
Suppose that $p\ge 3$. Then $G$ has a fixed point on $C$.
\end{corollary}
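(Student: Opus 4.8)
The plan is to reduce to the smooth case and then invoke Lemma~\ref{lemma:PGL}. First I would pass to the normalization $\nu\colon\tilde C\to C$. Since $C$ is a rational curve, its normalization $\tilde C$ is a smooth rational curve, hence $\tilde C\cong\P^1$. Because normalization is canonical, every automorphism of $C$ lifts uniquely to an automorphism of $\tilde C$ in a way compatible with composition; thus the $G$-action on $C$ induces a $G$-action on $\tilde C$ for which $\nu$ is equivariant. (If one only cares about the smooth case $C=\P^1$, this step is vacuous.)

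Next I would verify that the induced action on $\tilde C$ is faithful. If $g\in G$ acts trivially on $\tilde C$, then equivariance gives $g\cdot\nu(x)=\nu(g\cdot x)=\nu(x)$ for every $x\in\tilde C$, and since $\nu$ is surjective this means $g$ acts trivially on $C$; faithfulness of the original action then forces $g$ to be trivial. Hence $G$ embeds as a $p$-subgroup of $\Aut(\tilde C)\cong\Aut(\P^1)\cong\PGL_2(\Bbbk)$. Since $p\ge 3>2$, Lemma~\ref{lemma:PGL} with $n=2$ produces a fixed point $\tilde P\in\tilde C$ of $G$. Finally I would push this point down: setting $P=\nu(\tilde P)$, equivariance gives $g\cdot P=g\cdot\nu(\tilde P)=\nu(g\cdot\tilde P)=\nu(\tilde P)=P$ for all $g\in G$, so $P$ is a fixed point of $G$ on $C$.

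The only delicate points are the functoriality (hence $G$-equivariance) of the normalization and the resulting faithfulness of the lifted action; once these are settled, the statement is an immediate consequence of Lemma~\ref{lemma:PGL} in the case $n=2$. No genuinely new input beyond the smooth case is required—the whole argument consists in transporting the group action to the normalization and transporting a fixed point back.
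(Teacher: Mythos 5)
Your proposal is correct and follows exactly the paper's own argument: lift the $G$-action to the normalization $\tilde C\cong\P^1$, apply Lemma~\ref{lemma:PGL} there, and push the fixed point back down via the ($G$-equivariant) normalization map. The paper states this in one line; you have merely spelled out the routine equivariance and faithfulness checks.
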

\begin{proof}
The action of $G$ lifts to the normalization $\tilde{C}$ of $C$,
and $G$ has a fixed point on $\tilde{C}$ by Lemma~\ref{lemma:PGL}.
\end{proof}

\begin{corollary}
\label{corollary:conic}
Let $G$ be a $p$-group acting faithfully on a (possibly reducible) conic $C$.
Suppose that $p\ge 3$. Then $G$ has a fixed point on $C$.
\end{corollary}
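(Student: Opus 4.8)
The plan is to divide the argument according to the type of the conic $C$. Recall that a possibly reducible conic is either a smooth conic, which is isomorphic to $\P^1$; a double line, whose reduction is a single line; or a union $C=L_1\cup L_2$ of two distinct lines. I would treat the reducible case and the two irreducible cases separately.

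First I would dispose of the reducible case $C=L_1\cup L_2$, where the two lines meet at a point $P$. Here $P$ is the unique singular point of $C$, and since every automorphism of $C$ preserves the singular locus, every element of $G$ fixes $P$; thus $P$ is the desired fixed point. Alternatively, as $G$ is a $p$-group with $p$ odd, the homomorphism $G\to\Sym(\{L_1,L_2\})\cong\mumu_2$ recording the action on the two components must be trivial, so $G$ preserves each $L_i$ and hence fixes $P=L_1\cap L_2$.

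In the remaining case $C$ is irreducible, so its reduction $C_\red$ is isomorphic to $\P^1$. The $G$-action on $C$ induces a $G$-action on $C_\red$, that is, a homomorphism $G\to\Aut(C_\red)\cong\PGL_2(\Bbbk)$ whose image is again a $p$-group. Since $p\ge 3>2$, I would invoke Lemma~\ref{lemma:PGL} to obtain a fixed point of this image on $C_\red$; any such point lies on $C$ and is fixed by all of $G$.

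I do not expect any serious obstacle. The only points that require care are that odd $p$ forbids interchanging the two components in the reducible case---or, put more robustly, that the node is automatically $G$-invariant---and that passing to $C_\red\cong\P^1$ reduces the irreducible cases to the fixed-point statement for $p$-subgroups of $\PGL_2(\Bbbk)$ already established in Lemma~\ref{lemma:PGL}.
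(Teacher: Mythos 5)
Your proof is correct and takes essentially the same route as the paper: split by the type of the conic, apply Lemma~\ref{lemma:PGL} to get a fixed point when the (reduced) conic is $\P^1$, and observe that the intersection point of the two components is automatically $G$-invariant in the reducible case. Your explicit handling of the double line via the reduction $C_{\red}$ (noting the image of $G$ in $\Aut(C_{\red})$ is again a $p$-group) is slightly more careful than the paper's proof, which absorbs that case into the $\P^1$ case.
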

\begin{proof}
The conic $C$ is either isomorphic to $\P^1$,
or is a union of two irreducible components meeting in a single point.
In the former case the group $G$ has a fixed point on $C$
by Lemma~\ref{lemma:PGL}, and in the latter case the intersection
point of the irreducible components is $G$-invariant.
\end{proof}

\begin{corollary}\label{corollary:P1xP1}
Let $G\subset\Aut(\P^1\times\P^1)$ be a $p$-group. Suppose that $p\ge 3$.
Then $G$ has a fixed point on $\P^1\times\P^1$.
\end{corollary}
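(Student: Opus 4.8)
The plan is to exploit the well-known structure of the automorphism group, namely
$$
\Aut(\P^1\times\P^1)\cong\big(\PGL_2(\Bbbk)\times\PGL_2(\Bbbk)\big)\rtimes\mumu_2,
$$
where the factor $\mumu_2$ acts by interchanging the two copies of $\P^1$. The proof will proceed in two stages: first reduce to the situation in which $G$ preserves each ruling, and then produce a fixed point factor by factor.

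For the first stage, I would consider the composite homomorphism $G\hookrightarrow\Aut(\P^1\times\P^1)\twoheadrightarrow\mumu_2$ coming from the semidirect product decomposition above. Since $p\ge 3$, the group $G$ has odd order, so its image in $\mumu_2$ is trivial; hence $G$ is contained in the normal subgroup $\PGL_2(\Bbbk)\times\PGL_2(\Bbbk)$ and does not swap the two factors. Denote by $G_1$ and $G_2$ the images of $G$ under the two projections to $\PGL_2(\Bbbk)$. Each $G_i$ is a $p$-subgroup of $\Aut(\P^1)\cong\PGL_2(\Bbbk)$, and $G\subset G_1\times G_2$ by construction.

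For the second stage, I would apply Lemma~\ref{lemma:PGL} with $n=2$, which is legitimate because $p\ge 3>2$. This yields a fixed point $P_1\in\P^1$ of $G_1$ on the first factor and a fixed point $P_2\in\P^1$ of $G_2$ on the second factor. The point $(P_1,P_2)\in\P^1\times\P^1$ is then fixed by every element of $G$: indeed, any $g\in G$ acts as a pair $(g_1,g_2)$ with $g_i\in G_i$, so $g\cdot(P_1,P_2)=(g_1 P_1,g_2 P_2)=(P_1,P_2)$. This produces the required $G$-fixed point.

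The argument involves no serious obstacle; the only point that genuinely requires the hypothesis $p\ge 3$ is the reduction step, where oddness of $|G|$ is used to rule out the interchange of the two factors. Once that is secured, the conclusion follows immediately from Lemma~\ref{lemma:PGL} and the observation that a fixed point on each factor assembles into a fixed point on the product.
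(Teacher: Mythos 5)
Your proof is correct and follows essentially the same route as the paper: both use the decomposition $\Aut(\P^1\times\P^1)\cong\big(\PGL_2(\Bbbk)\times\PGL_2(\Bbbk)\big)\rtimes\mumu_2$, the oddness of $|G|$ to conclude $G\subset\PGL_2(\Bbbk)\times\PGL_2(\Bbbk)$, and then Lemma~\ref{lemma:PGL} on each factor. The only difference is that you spell out the factorwise fixed-point argument which the paper leaves implicit.
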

\begin{proof}
One has
$$
\Aut\big(\P^1\times\P^1\big)\cong\big(\PGL_2(\Bbbk)\times\PGL_2(\Bbbk)\big)\rtimes \mumu_2.
$$
Hence $G\subset\PGL_2(\Bbbk)\times\PGL_2(\Bbbk)$, so that
the assertion follows from Lemma~\ref{lemma:PGL}.
\end{proof}

\begin{corollary}\label{corollary:quadric-surface}
Let $G\subset\PGL_4(\Bbbk)$ be a $p$-group. Suppose that there is a $G$-invariant surface
$S$ of degree at most $2$ in $\P^4$. Suppose also that $p\ge 3$.
Then $G$ has a fixed point on $S$.
\end{corollary}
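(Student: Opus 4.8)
The plan is to organize the argument around the linear span of $S$ and to reduce each resulting geometric type to a fixed-point statement that is already available. Since $G$ preserves $S$, it preserves the linear span $\langle S\rangle$; replacing the ambient projective space by $\langle S\rangle$, I may assume that $S$ spans the ambient space. If $S$ spans a $\P^3$, then it is a quadric surface of rank at least $2$, that is, a smooth quadric, an irreducible quadric cone, or a union of two distinct planes. If instead $S$ spans only a $\P^2$, then $S_{\mathrm{red}}$ is that plane and the task becomes to produce a $G$-fixed point on a $\P^2$; this last possibility (a plane or a double plane) is the delicate one, and I return to it at the end.

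For the rank-$\ge 2$ quadrics I proceed case by case, each time exhibiting a canonical $G$-invariant subvariety. If $S$ is smooth, the Segre isomorphism identifies it with $\P^1\times\P^1$ and $G$ acts through $\Aut(\P^1\times\P^1)$, so Corollary~\ref{corollary:P1xP1} supplies a fixed point (its hypothesis $p\ge 3$ is exactly what is assumed). If $S$ is an irreducible quadric cone, its vertex is the unique singular point and is therefore $G$-invariant, giving the fixed point directly. If $S$ is a union of two distinct planes, then $\Sing(S)$ is the line $\ell\cong\P^1$ along which they meet; this line is $G$-invariant even when $G$ interchanges the two planes, $G$ acts on it through $\PGL_2(\Bbbk)$, and Lemma~\ref{lemma:PGL} with $n=2$ (which needs only $p>2$) produces a fixed point on $\ell\subset S$.

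The case in which $S_{\mathrm{red}}$ is a whole plane $\Pi\cong\P^2$ is the one I expect to be the main obstacle. For $p\ge 5$ it is immediate: $G$ acts on $\Pi$ through $\PGL_3(\Bbbk)$, and Lemma~\ref{lemma:PGL} with $n=3$ gives a fixed point since $p>3$. The genuinely delicate regime is $p=3$, where a lift of the action on $\Pi$ to $\GL_3(\Bbbk)$ can be an irreducible three-dimensional representation of a $3$-group (a Heisenberg-type action), which has no invariant line and hence no fixed point on $\Pi$. Handling this branch therefore cannot be purely formal: one must use that the surfaces to which the statement is applied are reduced quadrics of rank at least $2$, so that $S_{\mathrm{red}}$ is never an entire plane and this case does not occur, or else bring in additional structure of the ambient $G$-action that rules out the irreducible three-dimensional lift. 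Pinning down this input is the crux of the matter.
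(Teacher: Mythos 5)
Your handling of the rank~$\ge 2$ quadrics coincides with the paper's own proof: the smooth quadric is dealt with by Corollary~\ref{corollary:P1xP1} and the irreducible cone by its vertex, exactly as in the paper. For the union of two distinct planes the paper argues differently: since $p$ is odd, a $p$-group cannot interchange the two components, so each plane is $G$-invariant and the paper falls back on the $\deg S=1$ case; your argument via the $G$-invariant singular line and Lemma~\ref{lemma:PGL} (with $n=2$, so only $p>2$ is needed) is in fact the more robust one, since it survives at $p=3$ where the paper's reduction does not.

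The obstacle you single out in the plane case is not a shortcoming of your proof: it is a genuine defect of the statement. The paper's entire treatment of $\deg S=1$ is the citation of Lemma~\ref{lemma:PGL}, which for a plane means $n=3$ and hence requires $p>3$; so the paper's proof, as written, establishes the corollary only for $p\ge 5$. Moreover, your Heisenberg obstruction really is a counterexample at $p=3$: embed the Heisenberg group of order~$27$ in $\GL_4(\Bbbk)$ as the sum of its irreducible three-dimensional representation and a trivial one-dimensional one; this subgroup meets the scalars trivially, so its image in $\PGL_4(\Bbbk)$ is a $3$-group of order~$27$, it preserves the plane $S=\{x_3=0\}$, and it has no fixed point on $S$ because the three-dimensional summand contains no common eigenvector. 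Thus no ``additional structure of the ambient action'' can close this case; the correct statement must either assume $p\ge 5$ or exclude planes and double planes when $p=3$. None of this affects the paper downstream, since the corollary is invoked only in Lemma~\ref{lemma:three-quadrics}, where $p\ge 5$ is assumed. (Two small remarks: ``in $\P^4$'' in the statement should read ``in $\P^3$'', consistent with $G\subset\PGL_4(\Bbbk)$ and with that application; and if the ambient space really were $\P^4$, your span dichotomy would also need the case of two planes meeting in a single point, which is trivial because that point is then fixed.)
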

\begin{proof}
If $\deg(S)=1$, then $G$ has a fixed point on $S$ by Lemma~\ref{lemma:PGL}.
Thus we may assume that~\mbox{$\deg(S)=2$}.
If~$S$ is reducible, then both its irreducible components are $G$-invariant,
so that there is again a fixed point on~$S$. If~$S$ is a cone with an irreducible
base, then its unique singular point is fixed by~$G$. Finally, if~$S$ is a non-singular quadric,
then $G$ has a fixed point on $S$ by Corollary~\ref{corollary:P1xP1}.
\end{proof}

Recall that for a given group $G$ with a representation in a
vector space~$V$ a \emph{homogeneous semi-invariant} of $G$ of degree $n$
is a non-zero homogeneous
polynomial $f\in\Bbbk[V]$ of degree $n$ the vector space $\Bbbk\cdot f$ is $G$-invariant.
In other words, $f$ is an eigen-vector of $G$ in the vector space~\mbox{$\Sym^n V^{\vee}$}.

\begin{lemma}\label{lemma:Heisenberg}
Let $V$ be a $p$-dimensional vector space, and $G\subset\GL(V)$ be a non-abelian $p$-group.
Then $G$ has no semi-invariants of degree less than~$p$.
\end{lemma}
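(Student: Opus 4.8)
The plan is to pin down the representation $V$ first, and then exhibit two elements of $G$ whose commutator is a non-trivial scalar; this reduces the whole statement to a one-line comparison of weights. First I would observe that $G\subset\GL(V)$ acts faithfully, so in particular its centre $Z(G)$ acts faithfully on $V$. Since $G$ is non-abelian, Lemma~\ref{lemma:basic}(iv) yields an irreducible subrepresentation $W\subseteq V$ with $p\mid\dim W$; as $\dim W\le\dim V=p$, this forces $W=V$, so $V$ is irreducible. By Schur's lemma every element of $Z(G)$ then acts on $V$ as a scalar matrix, so $Z(G)$ consists of scalars.

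Next I would extract a ``Heisenberg pair''. Since $G$ is non-abelian, $G/Z(G)$ is a non-trivial $p$-group and hence has non-trivial centre; lifting a non-trivial central element of $G/Z(G)$ gives an element $s\in G\setminus Z(G)$ with $[s,g]\in Z(G)$ for all $g\in G$. For such an $s$ the map $g\mapsto[s,g]$ is a homomorphism $G\to Z(G)$, since its values are central and hence unchanged under conjugation. This homomorphism is non-trivial because $s\notin Z(G)$, so its image is a non-trivial subgroup of the $p$-group $Z(G)$ and therefore contains an element of order $p$. As $Z(G)$ consists of scalars, this produces an element $t\in G$ with $[s,t]=\zeta\cdot\mathrm{id}_V$, where $\zeta$ is a primitive $p$-th root of unity; equivalently $st=\zeta\, ts$ in $\GL(V)$.

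Finally I would play the two sides of the relation $st=\zeta\, ts$ against a putative semi-invariant. Suppose $f$ is a semi-invariant of degree $n$, so $s\cdot f=\chi(s)f$ and $t\cdot f=\chi(t)f$ for the associated character $\chi$ (only its restriction to $\langle s,t\rangle$ is needed, and this holds because a semi-invariant is an eigenvector for every element of $G$). The scalar $\zeta\cdot\mathrm{id}_V$ acts on the degree-$n$ component $\Sym^n V^{\vee}$ of $\Bbbk[V]$ as multiplication by $\zeta^{-n}$. Applying $st=\zeta\, ts$ to $f$ and using that $s$ and $t$ act on $f$ by the scalars $\chi(s)$ and $\chi(t)$ gives $\chi(s)\chi(t)=\zeta^{-n}\chi(s)\chi(t)$, whence $\zeta^{n}=1$ and thus $p\mid n$; this is impossible for $1\le n<p$, which proves the claim. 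The only genuinely structural point — and the step I expect to be the main obstacle — is guaranteeing that a single commutator $[s,t]$ is a non-trivial scalar: this is exactly what irreducibility (forcing $Z(G)$ to be scalar) together with the second centre of $G$ (forcing a commutator to land in $Z(G)$) provide, while the concluding weight computation is purely formal.
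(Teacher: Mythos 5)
Your proof is correct, and it shares its opening move with the paper's: both use Lemma~\ref{lemma:basic}(iv) plus Schur's lemma to conclude that $V$ is irreducible and that the centre of $G$ acts by scalars. But the way you derive the contradiction is genuinely different. The paper stays abstract: it notes that the centre $Z$ acts by scalar roots of unity of $p$-power order, hence acts faithfully on $\Sym^d V^\vee$ whenever $d$ is coprime to $p$; a semi-invariant of degree $d<p$ would give a one-dimensional $G$-subrepresentation of $\Sym^d V^\vee$ on which $Z$ acts faithfully, and a \emph{second} application of Lemma~\ref{lemma:basic}(iv), now to that one-dimensional representation, yields the contradiction. You instead make the obstruction explicit: using the second centre of $G$ you produce $s,t\in G$ with $[s,t]=\zeta\cdot\mathrm{id}_V$ for a primitive $p$-th root of unity $\zeta$, and then use the fact that a one-dimensional character kills commutators while $[s,t]$ acts on $\Sym^n V^\vee$ as $\zeta^{-n}$, forcing $p\mid n$. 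Your route is more self-contained --- it does not re-invoke Lemma~\ref{lemma:basic}(iv) as a black box --- and it exposes the ``Heisenberg'' mechanism that gives the lemma its name, at the cost of the extra group-theoretic construction; the paper's version is shorter on the page precisely because the group theory is delegated to Lemma~\ref{lemma:basic}(iv). All the delicate points in your argument check out: the map $g\mapsto[s,g]$ is a homomorphism exactly for the reason you give (the values are central, so the conjugated factor in $[s,gh]=[s,g]\cdot g[s,h]g^{-1}$ collapses), its image is a non-trivial subgroup of $Z(G)$ and so contains a scalar of order exactly $p$, and the sign ambiguity between $\zeta^{n}$ and $\zeta^{-n}$ coming from the choice of dual action is immaterial to the conclusion $p\mid n$.
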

\begin{proof}
By Lemma~\ref{lemma:basic}(iv) the representation $V$ is irreducible.
Therefore, by Schur's lemma the center $Z$ of $G$ acts on $V$ by scalar
matrices whose entries are roots of $1$ of degree $p^k$ for some $k$.
In particular, $Z$ acts faithfully on~\mbox{$\Sym^r V^\vee$} for every
$r$ coprime to $p$.

Now suppose that $G$ has a homogeneous semi-invariant of positive degree~\mbox{$d<p$}.
This means that $\Sym^d V^\vee$ contains a one-dimensional
$G$-subrepresentation $W$. Since $Z$ acts (faithfully) by scalar matrices
on $\Sym^d V^\vee$, it acts faithfully on $W$ as well. The latter
contradicts Lemma~\ref{lemma:basic}(iv).
\end{proof}

\begin{lemma}\label{lemma:L-cap-R}
Let $V$ be a vector space, and $G\subset\GL(V)$ be an abelian $p$-group.
Suppose that $G$ preserves a hypersurface $R$ in $\P(V)$ of degree
less than $p$. Then $G$ has a fixed point on $R$.
\end{lemma}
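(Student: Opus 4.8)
The plan is to diagonalize the abelian group $G$ and reduce the problem to understanding how the defining polynomial of $R$ behaves with respect to the resulting eigenspace decomposition. Since $G\subset\GL(V)$ is an abelian $p$-group, by Remark~\ref{remark:detskiy-sad} its elements are simultaneously diagonalizable, so we may choose coordinates $x_1,\dots,x_n$ on $V$ such that $G$ acts diagonally, with each $x_i$ being a common eigenvector. Each coordinate $x_i$ carries a character $\chi_i\colon G\to\Bbbk^*$, and a monomial $x_1^{a_1}\cdots x_n^{a_n}$ carries the character $\chi_1^{a_1}\cdots\chi_n^{a_n}$. The hypersurface $R$ is defined by a semi-invariant $f$ of degree $d<p$, and I would expand $f$ into its monomials grouped by the characters they carry.

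The key step is to show that $R$ contains a $G$-invariant coordinate point, i.e.\ one of the points $P_i=[0:\dots:1:\dots:0]$ (with the $1$ in the $i$-th slot), since each such point is automatically fixed by the diagonal action of $G$. First I would observe that $f$ is a semi-invariant, so all its monomials carry one and the same character, say $\psi$. The idea is to produce a coordinate point on which $f$ vanishes. Equivalently, I want some index $i$ such that the pure power $x_i^d$ does not appear in $f$ with nonzero coefficient. Suppose to the contrary that for every $i$ the monomial $x_i^d$ appears, forcing $\chi_i^d=\psi$ for all $i$. I would then exploit the bound $d<p$ together with the structure of the $p$-group of characters: the map $\chi\mapsto\chi^d$ is an automorphism of the character group of $G$ (because $d$ is coprime to $p=|G|$-relevant order), so $\chi_i^d=\chi_j^d$ would force $\chi_i=\chi_j$ for all $i,j$. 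Hence all coordinates carry the same character, meaning $G$ acts by scalars, and then every point of $\P(V)$, in particular every point of $R$, is fixed by $G$.

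Thus the argument splits into two cases. If $G$ acts on $V$ by scalar matrices, then $G$ acts trivially on $\P(V)$ and the conclusion is immediate since $R$ is nonempty. Otherwise, not all characters $\chi_i$ coincide, so by the coprimality of $d$ with the exponent of $G$ the powers $\chi_i^d$ cannot all be equal either; hence there is an index $i$ with $\chi_i^d\neq\psi$, which means the coefficient of $x_i^d$ in $f$ must vanish. Consequently the coordinate point $P_i$ lies on $R$, and being fixed by the diagonal $G$, it is the desired fixed point.

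The main obstacle I anticipate is the character-theoretic step: justifying cleanly that $\chi_i^d=\chi_j^d$ implies $\chi_i=\chi_j$ when $d<p$. This relies on the fact that the order of every character divides a power of $p$, so raising to the $d$-th power is injective on the character group precisely because $\gcd(d,p)=1$ for $0<d<p$. One subtlety is handling the possibility that $f$ is not a single monomial but a sum; here the semi-invariance is essential, since it guarantees that \emph{every} monomial of $f$ carries the same character $\psi$, which is what lets me compare the pure powers $x_i^d$ across different $i$ and pin down the relation $\chi_i^d=\psi$ whenever $x_i^d$ occurs. Once this injectivity is established, the rest is a direct vanishing argument at a coordinate point.
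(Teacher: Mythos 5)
Your proof is correct, but it takes a genuinely different route from the paper's. Both arguments begin by diagonalizing the abelian $p$-group $G$, but then diverge. The paper picks a two-dimensional $G$-invariant subspace $W\subset V$ and considers the line $L=\P(W)$: if $L\not\subset R$, then $L\cap R$ is a nonempty $G$-invariant set of fewer than $p$ points, and since every $G$-orbit has $p$-power size, each of these points must be fixed; if $L\subset R$, then a fixed point of $L$ coming from a one-dimensional subrepresentation of $W$ lies on $R$. You instead work with the defining equation: since $R$ is $G$-invariant its equation $f$ is a semi-invariant (unique up to scalar, so $G$ can only rescale it --- a standard fact you assert without proof, which is fine), hence all monomials of $f$ carry one character $\psi$; then, unless $G$ acts by scalars (in which case the claim is trivial), injectivity of the $d$-th power map on the $p$-power-torsion character group --- valid precisely because $1\le d<p$ --- produces a coordinate $x_i$ with $\chi_i^d\neq\psi$, so the coefficient of $x_i^d$ in $f$ vanishes and the corresponding coordinate point is a $G$-fixed point on $R$. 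The underlying ``small prime'' mechanisms differ: the paper uses that a $G$-orbit of size less than $p$ is a singleton, while you use that a character of $p$-power order killed by an exponent $d<p$ is trivial. The paper's orbit-counting version is somewhat more geometric and does not need to manipulate an explicit equation, only the degree bound on $L\cap R$; your version is purely algebraic, locates the fixed point explicitly as a coordinate point, and gives the slightly sharper dichotomy that either $G$ is scalar or some coordinate point lies on $R$. Both arguments are elementary and complete.
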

\begin{proof}
Since $G$ is abelian, $V$ splits into a sum of one-dimensional subrepresentations.
Choose some two-dimensional $G$-invariant subspace $W\subset V$, and let $L=\P(W)$
be the corresponding line in $\P(V)$. If $L$ is not contained in $R$, then
the intersection $L\cap R$ is a non-empty $G$-invariant set that consists of less than $p$ points,
so that all these points must be $G$-invariant. Thus we may assume that $L\subset R$.
It remains to notice that $L$ contains a point fixed by $G$ since $W$ contains a one-dimensional subrepresentation.
\end{proof}

\begin{lemma}\label{lemma:proj-Heisenberg}
Let $V$ be a vector space of dimension at most $p$, and let~\mbox{$G\subset\PGL(V)$} be a $p$-group.
Suppose that $G$ preserves a hypersurface~$R$ in $\P(V)$ of degree
less than $p$. Then $G$ has a fixed point on $R$.
\end{lemma}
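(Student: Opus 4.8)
The plan is to reduce the projective statement to the two linear lemmas just proved (Lemma~\ref{lemma:Heisenberg} and Lemma~\ref{lemma:L-cap-R}), handling the abelian and non-abelian cases separately. First I would lift the situation to $\GL(V)$: consider the natural projection $\theta\colon\SL(V)\to\PGL(V)$, let $\tilde G=\theta^{-1}(G)$, and let $\bar G$ be a Sylow $p$-subgroup of $\tilde G$. As in the proof of Lemma~\ref{lemma:PGL}, one checks that $\theta(\bar G)=G$, so that $\bar G$ is a $p$-subgroup of $\GL(V)$ surjecting onto $G$. Thus a $\bar G$-fixed point on $R$ is automatically a $G$-fixed point, and it suffices to produce a fixed point for the linear group $\bar G\subset\GL(V)$, which preserves the hypersurface $R$ of degree $d<p$ together with the cone over it in $V$.

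Next I would split into two cases according to whether $\bar G$ is abelian. If $\bar G$ is abelian, then Lemma~\ref{lemma:L-cap-R} applies directly, since $\bar G\subset\GL(V)$ is an abelian $p$-group preserving the hypersurface $R$ of degree less than $p$, giving a $\bar G$-fixed point on $R$ and hence a $G$-fixed point. If $\bar G$ is non-abelian, then the key is that the preserved hypersurface $R$ furnishes a semi-invariant: the equation $f\in\Bbbk[V]$ cutting out the cone over $R$ spans a one-dimensional $\bar G$-subrepresentation of $\Sym^d V^\vee$, so $f$ is a homogeneous semi-invariant of degree $d<p$. The aim is to contradict Lemma~\ref{lemma:Heisenberg}; the obstacle is that that lemma is stated for groups acting on a space of dimension exactly $p$ via an irreducible representation, whereas here $\dim V\le p$ and $V$ need not be irreducible.

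I expect this mismatch to be the main point requiring care. To invoke Lemma~\ref{lemma:Heisenberg} I would pass to an appropriate irreducible constituent: by Lemma~\ref{lemma:basic}(iv), applied after restricting to a suitable subgroup whose center acts faithfully, $V$ contains an irreducible $\bar G'$-subrepresentation $W$ of dimension divisible by $p$, which forces $\dim W=p$ since $\dim V\le p$, and hence $W=V$ and $\dim V=p$. Thus in the non-abelian case the representation $V$ is already $p$-dimensional and irreducible, so Lemma~\ref{lemma:Heisenberg} says $\bar G$ has no semi-invariant of degree less than $p$—contradicting the existence of $f$. This rules out the non-abelian case entirely, leaving only the abelian case already settled, and completes the proof.
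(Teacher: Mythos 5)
Your proposal is correct and follows essentially the same route as the paper: lift $G$ to a Sylow $p$-subgroup $\bar G$ of $\theta^{-1}(G)\subset\SL(V)$, show $\bar G$ must be abelian (the paper splits on $\dim V<p$ versus $\dim V=p$, invoking Lemma~\ref{lemma:GL} and Lemma~\ref{lemma:Heisenberg} respectively, which is the same use of Lemma~\ref{lemma:basic}(iv) and the semi-invariant cutting out $R$ that you make explicit), and then conclude via Lemma~\ref{lemma:L-cap-R}. Your only superfluous step is the phrase about ``restricting to a suitable subgroup'': since $\bar G\subset\GL(V)$ acts faithfully, its center automatically acts faithfully, so Lemma~\ref{lemma:basic}(iv) applies to $\bar G$ directly.
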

\begin{proof}
Let $\theta\colon\SL(V)\to\PGL(V)$ be the natural homomorphism.
Put~\mbox{$\tilde{G}=\theta^{-1}(G)$}, and let $\bar{G}$ be a Sylow $p$-subgroup
of $\tilde{G}$. Then $\bar{G}$ is a $p$-group.
If $\dim V<p$, then $\bar{G}$ is abelian by Lemma~\ref{lemma:GL}.
If $\dim V=p$, then~$\bar{G}$ is abelian by Lemma~\ref{lemma:Heisenberg}.
Thus $G=\theta(\bar{G})$ is abelian as well, and there is a $\bar{G}$-fixed (that is, $G$-fixed)
point on $R$ by Lemma~\ref{lemma:L-cap-R}.
\end{proof}

\begin{lemma}\label{lemma:elliptic}
Let $V$ be a vector space of dimension $n\ge 3$, and
let~\mbox{$C\subset\P(V)$} be an irreducible curve of degree $n+1$ and geometric genus~$1$. Let
$G\subset \PGL(V)$ be a $p$-group such that $C$ is $G$-invariant.
Then $p\le n+1$.
\end{lemma}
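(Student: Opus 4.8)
The plan is to pass to the normalization of $C$ and read off the bound from the group law of the resulting elliptic curve. First I would note that, since $C$ is irreducible of geometric genus $1$, its normalization $\nu\colon E\to C$ is a smooth genus-$1$ curve, which I equip with the structure of an elliptic curve by choosing an origin; moreover $\nu$ is birational onto $C$. The $G$-action on $C$ lifts uniquely to $E$ by functoriality of normalization, yielding a homomorphism $\varrho\colon G\to\Aut(E)$. I would then reduce to the case where $C$ spans $\P(V)$ (replacing $V$ by the automatically $G$-invariant linear span of $C$): in this situation a point of $C$ fixed by some $g\in G$ spans an eigenline of a linear lift $\tilde g\in\GL(V)$, and since the cone over $C$ spans $V$, an element fixing $C$ pointwise has a single eigenvalue and is therefore trivial in $\PGL(V)$. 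Hence $\varrho$ is injective, and I may regard the nontrivial $p$-group $G$ as a $p$-subgroup of $\Aut(E)$.

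Next I would analyse this subgroup using the structure $\Aut(E)\cong E\rtimes F$, where $E$ acts on itself by translations and $F=\Aut(E,O)$ is a finite cyclic group of order $2$, $4$, or $6$. Let $G_0=G\cap E$ be the subgroup of $G$ acting by translations, so that $G/G_0$ embeds into $F$. If $G_0\neq G$, then $G/G_0$ is a nontrivial $p$-group whose order divides $|F|\le 6$; this forces $p\in\{2,3\}$, and since $n\ge 3$ gives $n+1\ge 4$, we already obtain $p\le n+1$ in that case. It therefore remains to bound the translation subgroup $G_0$.

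The heart of the argument is the identification of those translations that preserve the relevant line bundle. Set $L=\nu^{*}\OOO_{\P(V)}(1)$; since $\nu$ is birational onto $C$ we have $\deg L=\deg C=n+1$, and because every $g\in G$ preserves the hyperplane class it also preserves the class of $L$ in $\Pic(E)$. For a translation $\tau_t$ the condition $\tau_t^{*}L\cong L$ cuts out exactly the kernel of the polarization homomorphism $\varphi_L\colon E\to\Pic^0(E)$, which for a degree-$d$ line bundle on an elliptic curve is the full $d$-torsion subgroup $E[d]\cong\mumu_d\times\mumu_d$. Thus $G_0\subseteq E[n+1]$, so $G_0$ is a $p$-subgroup of $\mumu_{n+1}\times\mumu_{n+1}$; if $G_0\neq 1$ then $p\mid n+1$, whence $p\le n+1$. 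Combining this with the previous paragraph (a nontrivial $G$ has $G_0\neq 1$ or $G_0\neq G$) proves the lemma. The step I expect to be the main obstacle is the computation $\{\,t:\tau_t^{*}L\cong L\,\}=E[n+1]$, i.e.\ the input from the theory of theta groups on abelian varieties; by contrast, the lifting of the action to $E$ and the reduction to the non-degenerate case are comparatively routine.
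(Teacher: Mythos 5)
Your proof is correct, and its decisive step is genuinely different from the paper's. The common ground: both arguments pass to the normalization, obtain faithfulness of the $G$-action from the fact that $C$ spans $\P(V)$ (an element fixing an irreducible spanning curve pointwise is scalar), and use the structure of the automorphism group of an elliptic curve to reduce to translations --- the paper simply notes that for $p>3$ the whole group acts by translations, while you split off the $p\in\{2,3\}$ case, which is harmless since $n+1\ge 4$. The divergence is in how the translation subgroup is bounded. The paper argues by contradiction assuming $p>n+1$: it applies its fixed-point Lemma~\ref{lemma:PGL} to the projectivized space of sections of the linear system $\nu^*\bigl|\mathcal{O}_{\P(V)}(1)\vert_C\bigr|$ (this is where $p>n$ is needed) to produce a $G$-invariant \emph{divisor} $L$; since a nontrivial group of translations acts freely, every $G$-orbit has length divisible by $p$, so $p$ divides $\deg L=n+1<p$, a contradiction. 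You never produce an invariant divisor: you use only the invariance of the line bundle \emph{class} $L=\nu^*\mathcal{O}_{\P(V)}(1)$ together with the abelian-variety fact that the translations preserving a degree-$d$ class form $\ker\varphi_L=E[d]$, so that $G_0\subseteq E[n+1]$ and $p\mid n+1$. What each approach buys: the paper's route stays entirely inside its own toolkit (its fixed-point lemma plus elementary orbit counting), which fits the paper's self-contained style; yours imports an external input (theorem of the square / theta groups), but in exchange it is direct rather than by contradiction and gives the sharper conclusion that for $p\ge 5$ one actually has the divisibility $p\mid n+1$, not merely the inequality $p\le n+1$.
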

\begin{proof}
Suppose that $p>n+1$. Replacing $V$ by its $G$-invariant linear subspace,
we may assume that $C$ is not contained in a hyperplane in $\P(V)$, so that the action of
$G$ on $C$ is faithful.

Let $\nu\colon \tilde{C}\to C$ be the normalization of $C$,
and put~\mbox{$\mathcal{L}=\nu^*|\mathcal{O}_{\P(V)}(1)\vert_C|$}.
Then $\tilde{C}$ is an elliptic curve with an action of $G$, and $\mathcal{L}$ is an $n$-dimensional $G$-invariant linear system on $\tilde{C}$. By Lemma~\ref{lemma:PGL} there is a
$G$-invariant divisor~$L$ in~$\mathcal{L}$.

Since $p>3$, we see that $G$ acts on $\tilde C$ by translations.
Therefore, the degree of $L$ is divisible by $p$. This is impossible by the Riemann--Roch theorem.
\end{proof}

\begin{corollary}\label{corollary:curve-deg-le-3}
Let $C\subset\P^2$ be a curve of degree at most $3$, and let~\mbox{$G\subset\PGL_3(\Bbbk)$} be a $p$-group such that $C$ is
$G$-invariant. Suppose that~\mbox{$p\ge 5$}. Then $G$ has
a fixed point on $C$.
\end{corollary}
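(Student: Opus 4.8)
The plan is to recognize the statement as a direct specialization of Lemma~\ref{lemma:proj-Heisenberg}. Write $\P^2=\P(V)$ with $V=\Bbbk^3$, so that $C$ is a hypersurface $R$ in $\P(V)$ preserved by the $p$-group $G\subset\PGL(V)$. To invoke the lemma I only need to verify its two numerical hypotheses. First, $\dim V=3\le p$, which holds since $p\ge 5$. Second, $\deg R=\deg C\le 3<p$, which again holds precisely because $p\ge 5$. Granting these, Lemma~\ref{lemma:proj-Heisenberg} immediately produces a $G$-fixed point on $C$, and we are done.

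For the reader's convenience I would recall the mechanism behind Lemma~\ref{lemma:proj-Heisenberg} in this low-dimensional case, since it is short. One lifts $G$ along $\theta\colon\SL_3(\Bbbk)\to\PGL_3(\Bbbk)$ to $\tilde G=\theta^{-1}(G)$ and chooses a Sylow $p$-subgroup $\bar G\subset\tilde G$, which still surjects onto $G$. Because $\dim V=3<p$, Lemma~\ref{lemma:GL} forces $\bar G$ to be abelian; hence $V$ splits as a sum of one-dimensional $\bar G$-eigenspaces, and the linear action of $\bar G$ on the invariant cubic yields a fixed point by Lemma~\ref{lemma:L-cap-R}. Concretely one picks a $\bar G$-invariant line $L\subset\P(V)$: if $L\not\subset C$ then $L\cap C$ is a nonempty $G$-invariant set of fewer than $p$ points, each therefore fixed, while if $L\subset C$ one uses an eigenline of $\bar G$ inside $L$. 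Pushing the fixed point forward through $\theta$ gives the desired $G$-fixed point on $C$.

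There is essentially no obstacle here; the content lies entirely in the earlier lemmas. The only point deserving attention is why the hypothesis is $p\ge 5$ rather than $p\ge 3$: the degree bound $\deg C<p$ must accommodate the cubic case $\deg C=3$, and this fails for $p=3$. This threshold is sharp, as shown by the Fermat cubic curve $x^3+y^3+z^3=0$ in $\P^2$, which is invariant under the non-abelian $3$-group generated by the diagonal $\mumu_3^2$ and the cyclic permutation of the coordinates and has no fixed point on the curve, in the spirit of Example~\ref{example:3-group-on-P3-and-Fermat}. Thus $p\ge 5$ is the correct bound and the direct appeal to Lemma~\ref{lemma:proj-Heisenberg} is the most efficient route.
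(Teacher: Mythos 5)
Your proof is correct, but it takes a genuinely different route from the paper's. The paper argues by cases on an irreducible component of $C$ (each component is automatically $G$-invariant, since $C$ has at most three of them and $p\ge 5$): a line is handled by Lemma~\ref{lemma:PGL}, a conic by Corollary~\ref{corollary:conic}, and an irreducible cubic is shown to be singular via Lemma~\ref{lemma:elliptic} --- a smooth plane cubic has genus one and cannot admit such an action --- so its unique singular point is $G$-fixed. You instead observe that a plane curve of degree at most $3$ is a hypersurface of degree $<p$ in $\P(V)$ with $\dim V=3\le p$, so Lemma~\ref{lemma:proj-Heisenberg} applies verbatim; this is shorter, needs no case analysis and no component-replacement step, and never invokes Lemma~\ref{lemma:elliptic} or Corollary~\ref{corollary:conic}. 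There is no circularity: Lemma~\ref{lemma:proj-Heisenberg} precedes the corollary in the paper and rests only on Lemmas~\ref{lemma:GL}, \ref{lemma:Heisenberg} and~\ref{lemma:L-cap-R}. What the paper's route buys is a template that carries over to the companion Corollary~\ref{corollary:curve-deg-le-4}, where the curve sits in $\P^3$ and is not a hypersurface, so your argument cannot be reused there. Your closing remark on sharpness --- the Fermat cubic in $\P^2$ invariant under the non-abelian $3$-group generated by the diagonal $\mumu_3^2$ and the coordinate permutation, with no fixed point on the curve --- is correct and in the spirit of Example~\ref{example:3-group-on-P3-and-Fermat}.
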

\begin{proof}
Replacing $C$ by one of its irreducible components if necessary, we may assume that $C$ is irreducible.
If $C$ is a line, then $G$ has a fixed point on $C$ by Lemma~\ref{lemma:PGL}.
If $C$ is a conic, then $G$ has a fixed point on $C$ by Corollary~\ref{corollary:conic}.
Finally, if $C$ is an (irreducible) cubic, then
it is singular by Lemma~\ref{lemma:elliptic}; so $C$ has a unique singular
point, which must be fixed by~$G$.
\end{proof}

\begin{corollary}\label{corollary:curve-deg-le-4}
Let $C\subset\P^3$ be a curve of degree at most $4$.
Suppose that for a general point $P$ of (every irreducible component of) $C$
there is a neighbourhood $U_P$ such that $C\cap U_P$ is cut out by quadrics.
Let $G\subset\PGL_4(\Bbbk)$ be a $p$-group such that $C$ is
$G$-invariant. Suppose that $p\ge 5$. Then $G$ has
a fixed point on $C$.
\end{corollary}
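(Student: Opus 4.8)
The plan is to mirror the proof of Corollary~\ref{corollary:curve-deg-le-3}. Since $\deg C\le 4$, the curve $C$ has at most four irreducible components, so the $p$-group $G$ (with $p\ge 5>4$) fixes each of them; as the quadric hypothesis is imposed on every component, I may assume that $C$ is irreducible of some degree $d\le 4$ and look for a $G$-fixed point on it. The quadric hypothesis first excludes degenerate curves: if an irreducible $C$ of degree $d\ge 3$ lay in a plane $H$, then every quadric through $C$ would have to contain $H$ (a quadric not containing $H$ meets it in a conic, which is too small to contain a curve of degree $\ge 3$), so near a general point of $C$ the quadrics through $C$ would cut out $H$ rather than $C$, contrary to hypothesis. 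Hence every component of degree $\ge 3$ spans $\P^3$.

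It then remains to go through the degrees. For $d=1$ the group acts on a line $\cong\P^1$ and has a fixed point by Lemma~\ref{lemma:PGL}; the case $d=2$ is Corollary~\ref{corollary:conic}; and for $d=3$ the non-degenerate cubic is a smooth rational twisted cubic, so Corollary~\ref{corollary:rational-curve} applies. For $d=4$ the curve is non-degenerate, and pulling $\mathcal{O}(1)$ back to the normalization yields a degree-$4$ line bundle with $h^0\ge 4$; a short Riemann--Roch and Clifford computation then forces the geometric genus $g$ to equal $0$ or $1$ (if this bundle were special, Clifford would give $h^0\le 3$, so it is non-special, and $h^0=5-g\ge 4$). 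The rational case $g=0$ is handled once more by Corollary~\ref{corollary:rational-curve}.

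The remaining, and genuinely delicate, case is $g=1$: a smooth elliptic quartic $C\subset\P^3$ (a degree-$4$ line bundle on a genus-$1$ curve is very ample, so $C$ is smooth, and the quadrics through it form a pencil). The main obstacle is that a nontrivial $p$-group with $p\ge 5$ acts on an elliptic curve only by translations, which are fixed-point-free; so instead of producing a fixed point I must show that this case forces $G=1$. Since $C$ spans $\P^3$ the action on $C$ is faithful, and because the automorphisms of $C$ fixing a point form a group of order prime to $p$, the group $G$ acts purely by translations. Following the proof of Lemma~\ref{lemma:elliptic} with the complete linear system $|\mathcal{O}_C(1)|\cong\P^3$: by Lemma~\ref{lemma:PGL} (valid since $p>4$) there is a $G$-invariant divisor $L\in|\mathcal{O}_C(1)|$, and since $G$ acts by fixed-point-free translations, a nontrivial $G$ would have all its orbits on $\Supp(L)$ of size divisible by $p$, forcing $p\mid\deg L=4$ --- impossible for $p\ge 5$. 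Hence $G=1$ and fixes every point of $C$. (Equivalently, the $G$-invariant pencil of quadrics through $C$ has a discriminant of degree $4$, hence a $G$-fixed cone vertex, and projection of $C$ from that vertex exhibits a $G$-invariant $g^1_2$; a translation preserving a $g^1_2$ is $2$-torsion, so again $G=1$.) This is the crux of the argument, the rest being routine.
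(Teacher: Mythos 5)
Your proof is correct and follows essentially the same route as the paper's: reduce to an irreducible component, use the quadric hypothesis to rule out degenerate curves of degree $\ge 3$, and settle the line, conic, twisted cubic, and rational quartic cases via Lemma~\ref{lemma:PGL} and Corollaries~\ref{corollary:conic} and~\ref{corollary:rational-curve}. The only real difference is that where the paper disposes of the normal elliptic quartic by citing Lemma~\ref{lemma:elliptic}, you inline that lemma's argument (the group acts by translations, yet Lemma~\ref{lemma:PGL} produces a $G$-invariant divisor of degree $4$ in $|\mathcal{O}_C(1)|$, forcing $G$ to be trivial), which reaches the same conclusion by the same mechanism.
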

\begin{proof}
Replacing $C$ by one of its irreducible components if necessary, we may assume that $C$ is irreducible.
If $C$ is a line, then $G$ has a fixed point on $C$ by Lemma~\ref{lemma:PGL}.
If $C$ is a conic, then $G$ has a fixed point on $C$ by Corollary~\ref{corollary:conic}.
Therefore, we may assume that $C$ is not contained in a hyperplane in $\P^3$.
If $C$ is a twisted cubic, then $G$ has a fixed point on $C$ by~Lemma~\ref{lemma:PGL}.
If $C$ is a rational quartic, then $G$ has a fixed point on $C$
by Corollary~\ref{corollary:rational-curve}. Finally, if
$C$ is a normal elliptic curve of degree $4$, then $G$ has a fixed point on $C$ by Lemma~\ref{lemma:elliptic}.
\end{proof}

\begin{lemma}\label{lemma:Heisenberg-plus-one}
Let $V$ be a $(p+r)$-dimensional vector space, where~\mbox{$1\le r\le p-1$}.
Let $G\subset\GL(V)$ be a non-abelian $p$-group.
Suppose that $G$ has a homogeneous semi-invariant $f$ of degree $1\le d\le p-1$,
and let $R\subset\P(V)$ be the subscheme
defined by equation $f=0$.
The following assertions hold:
\begin{itemize}
\item[(i)] if $r=1$, then $f$ is a $d$-th power of a linear form;

\item[(ii)] if $r\ge 2$ and $d\ge 2$, then the singular locus of $R$ has dimension
at least~\mbox{$p-1$}.
\end{itemize}
\end{lemma}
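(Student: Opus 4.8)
The plan is to first pin down the structure of $V$ as a $G$-representation, then show that the semi-invariant $f$ cannot involve the $p$-dimensional ``Heisenberg'' part of $V$ at all, after which both assertions fall out by an elementary analysis of cones.

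First I would decompose $V$. Since $G\subset\GL(V)$ is faithful, its center acts faithfully on $V$, so by Lemma~\ref{lemma:basic}(iv) there is an irreducible subrepresentation $W\subseteq V$ with $p\mid\dim W$; as $\dim V=p+r\le 2p-1<2p$, this forces $\dim W=p$. Choosing a $G$-invariant complement $V'$ (of dimension $r<p$) and applying Lemma~\ref{lemma:GL}, the action on $V'$ is abelian, so $V'$ is a direct sum of $r$ characters and $[G,G]$ acts trivially on $V'$. I fix coordinates $x_1,\dots,x_p$ on $W^\vee$ and $y_1,\dots,y_r$ on $(V')^{\vee}$.

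The heart of the argument is to produce a central element that fixes $f$ itself but scales the $x$-coordinates nontrivially. Since $G$ is non-abelian, $[G,G]\ne 1$, and it cannot act trivially on $W$ (otherwise it would act trivially on all of $V$, contradicting faithfulness). By Lemma~\ref{lemma:basic}(ii) the normal subgroup $[G,G]$ meets the center, so I may pick $\zeta\in[G,G]\cap Z(G)$ of order $p$. By Schur's lemma $\zeta$ acts on $W$ by a scalar, which must be a nontrivial $p$-th root of unity $\omega$ (it acts trivially on $V'$ and nontrivially on $V$), while it acts trivially on $f$, because $f$ spans a one-dimensional representation on which the commutator subgroup is trivial. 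Writing $f=\sum_k f_k$ according to the degree $k$ in the $x$-variables, $\zeta$ multiplies $f_k$ by $\omega^{-k}$; comparing with $\zeta\cdot f=f$ forces $\omega^{k}=1$, hence $p\mid k$, for every $k$ with $f_k\ne 0$. Since $0\le k\le d\le p-1$, only $k=0$ survives, so $f$ is a polynomial in $y_1,\dots,y_r$ alone, that is, $f\in\Sym^d (V')^{\vee}$.

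Both conclusions are then immediate. If $r=1$, then $\Sym^d (V')^{\vee}$ is spanned by $y_1^d$, so $f$ is a scalar multiple of $y_1^d$, a $d$-th power of a linear form, proving (i). If $r\ge 2$ and $d\ge 2$, then $R=\{f=0\}$ is the cone with vertex the linear subspace $\P(W)=\{y_1=\dots=y_r=0\}$ over the hypersurface $\{f=0\}\subset\P(V')$; since every partial derivative $\partial f/\partial y_i$ has degree $d-1\ge 1$ in the $y$-variables and $f$ does not involve the $x$-variables, the whole gradient of $f$ vanishes identically along $\P(W)$, so $\P(W)\subseteq\Sing(R)$ and $\dim\Sing(R)\ge\dim\P(W)=p-1$, proving (ii). The step I expect to be the main obstacle is the production of $\zeta$: one must arrange simultaneously that it lies in $[G,G]$ (to guarantee it fixes $f$, not merely the line $\Bbbk f$) and that it acts by a nontrivial scalar on $W$, and this is exactly where faithfulness is combined with Lemma~\ref{lemma:basic}(ii). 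Everything after that is formal.
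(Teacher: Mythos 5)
Your proof is correct, and it shares the paper's overall skeleton: decompose $V$ into a $p$-dimensional irreducible summand plus an $r$-dimensional summand on which $G$ acts through an abelian quotient, show that $f$ involves only the coordinates of the abelian summand, and finish with the power-of-a-linear-form (case $r=1$) and cone (case $r\ge 2$) observations, which coincide with the paper's endgame. Where you genuinely diverge is the middle step. The paper splits $\Sym^d V^\vee$ into the summands $\Sym^{k_0}U^\vee\otimes\Sym^{k_1}T_1^\vee\otimes\cdots\otimes\Sym^{k_r}T_r^\vee$ and uses the $G$-equivariant projectors: a nonzero component of $f$ with $k_0\ge 1$ would produce a one-dimensional subrepresentation of $\Sym^{k_0}U^\vee$ with $1\le k_0\le p-1$, contradicting Lemma~\ref{lemma:Heisenberg}. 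You bypass Lemma~\ref{lemma:Heisenberg} entirely: you extract $\zeta\in[G,G]\cap Z(G)$ of order $p$ via Lemma~\ref{lemma:basic}(ii), observe that $\zeta$ fixes $f$ itself (not merely the line $\Bbbk f$) because every character of $G$ kills $[G,G]$, while by Schur's lemma and faithfulness it scales the irreducible summand by a nontrivial $p$-th root of unity and acts trivially on the abelian summand; comparing eigenvalues on the decomposition of $f$ by degree in the irreducible variables then kills all mixed terms, since those degrees are at most $d\le p-1$. Your route is more elementary and self-contained — indeed, specialized to $r=0$ it reproves Lemma~\ref{lemma:Heisenberg} — at the cost of redoing explicitly the group theory that the paper has packaged into that lemma; the paper's version is shorter given its existing toolkit. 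The point you flag as the main obstacle, namely arranging that $\zeta$ lies in $[G,G]$ so that it fixes $f$ exactly, is precisely the right subtlety, and your construction handles it cleanly.
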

\begin{proof}
By Lemma~\ref{lemma:basic}(iv) the representation $V$
splits into a sum of an irreducible $p$-dimensional
representation $U$ and $r$ one-dimensional representations $T_1,\ldots,T_r$.

The semi-invariant $f$ gives a one-dimensional
$G$-subrepresentation~$W$ in~\mbox{$\Sym^d V^\vee$}.
One has a splitting
\begin{equation}\label{eq:splitting}
\Sym^d V^\vee=\bigoplus W_{k_0,\ldots,k_r}, \quad k_0+\ldots+k_r=d,\ k_i\ge 0,
\end{equation}
where
\begin{equation*}
W_{k_0,\ldots,k_r}=\Sym^{k_0} U^\vee\otimes\Sym^{k_1} T_1^\vee\otimes\ldots\otimes \Sym^{k_r} T_r^\vee.
\end{equation*}
This splitting agrees with the action of $G$, so that there
are $G$-equivariant projectors on each of
the summands in~\eqref{eq:splitting}.

Assume that a projection of $W$ to at least one subspace $W_{k_0,\ldots,k_r}$
with $k_0\ge 1$ is non-trivial.
This gives a one-dimensional subrepresentation in~$W_{k_0,\ldots,k_r}$, and thus in $\Sym^{k_0}U^\vee$.
The latter is impossible by Lemma~\ref{lemma:Heisenberg}.

Therefore, we see that $W$ is contained in the subspace
$$
\bigoplus_{k_1+\ldots+k_r=d} W_{0,k_1,\ldots,k_r}\subset\Sym^d V^\vee.
$$
If $r=1$, this means that $f$ is a $d$-th power of a linear form, which
is assertion~(i).
If $r>1$, this means that $R$ is a cone over a subscheme
in the linear subspace $\P(T_1\oplus\ldots\oplus T_r)\subset\P(V)$ with the vertex $\P(U)\subset\P(V)$;
under an additional assumption that $d\ge 2$ we see that
$R$ is singular
along~\mbox{$\P(U)$},
which is assertion~(ii).
\end{proof}

The following result will be used in the proofs of
Lemmas~\ref{lemma:rho-1-iota-large} and~\ref{lemma:Fano-3-fold-small-g}.

\begin{lemma}\label{lemma:complete-int-of-two-quadrics-or-2-3}
Let $V$ be a vector space of dimension $n\ge 3$,
and~\mbox{$G\subset\PGL(V)$} be a $p$-group.
Suppose that $G$ preserves a normal variety~$X$
such that $X$ is either a complete intersection of two quadrics
in~\mbox{$\P(V)$}, or a complete intersection
of a quadric and a cubic in $\P(V)$. Suppose that~\mbox{$p\ge n-1$}
and $p\ge 5$.
Then $G$ has a fixed point on $X$.
\end{lemma}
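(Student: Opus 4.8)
The plan is to lift the action to a linear group, show that the lifted $p$-group is abelian, and then produce a fixed point by slicing $X$ with an invariant plane. First I would set up the lift: let $\theta\colon\SL(V)\to\PGL(V)$ be the natural projection, put $\tilde G=\theta^{-1}(G)$, and let $\bar G$ be a Sylow $p$-subgroup of $\tilde G$, so that $\theta(\bar G)=G$. A crucial preliminary object is a $G$-semi-invariant quadric. In the quadric--cubic case the space of quadrics through $X$ is one-dimensional, spanned by the defining quadric $Q$, so $Q$ is automatically a semi-invariant. In the two-quadrics case this space is the two-dimensional pencil cut out by $X$; since $p>2$, the image of $\bar G$ in $\GL_2$ is abelian by Lemma~\ref{lemma:GL}, hence diagonalizable, and a common eigenvector gives a semi-invariant quadric $Q$ of degree $2$.

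Next I would prove that $\bar G$ is abelian, and this is where the hypothesis $p\ge n-1$, i.e.\ $n\le p+1$, is used. If $n\le p-1$ this is immediate from Lemma~\ref{lemma:GL}. If $n=p$ and $\bar G$ were non-abelian, then $V$ would be irreducible by Lemma~\ref{lemma:basic}(iv), and Lemma~\ref{lemma:Heisenberg} would forbid any semi-invariant of degree less than $p$, contradicting the existence of $Q$ (degree $2<p$). If $n=p+1$ and $\bar G$ were non-abelian, then Lemma~\ref{lemma:Heisenberg-plus-one}(i), applied with $r=1$ and $d=2$, would force $Q=\ell^2$ for some linear form $\ell$; since $X$ is normal, hence reduced, $\ell$ would vanish on $X$, so $X$ would lie in the hyperplane $\{\ell=0\}$ and be cut out there by the single remaining equation, forcing $\deg X\le 3$. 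This contradicts the fact that a reduced complete intersection of the stated type has degree $4$ or $6$. Hence $\bar G$, and therefore $G$, is abelian.

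Now with $\bar G$ abelian it is diagonalizable: fix an eigenbasis $e_1,\dots,e_n$ of $V$, so that in the corresponding coordinates $G$ acts diagonally, fixing each coordinate point and preserving each coordinate subspace. Since $n\ge 3$, set $\Pi=\P(\langle e_1,e_2,e_3\rangle)\cong\P^2$; this plane is $G$-invariant, and so is $Z:=X\cap\Pi$, which is cut out in $\Pi$ by the restrictions of the two defining forms. If $\dim Z=0$, then by B\'ezout $Z$ is a $G$-invariant zero-dimensional subscheme of length $4$ (two quadrics) or $6$ (quadric and cubic); since $p\ge 5$ divides neither $4$ nor $6$, while $G$ permutes the finitely many points of $Z$ in orbits of $p$-power size, at least one point of $Z\subset X$ is $G$-fixed. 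If $\dim Z\ge 1$, then the one-dimensional part of $Z$ is a $G$-invariant curve of degree at most $3$ in $\Pi$: a line or a (possibly reducible) conic when the two restrictions share a component or one of them is a conic, a plane cubic when the quadric restricts to zero in the quadric--cubic case, or all of $\Pi$ when both restrictions vanish. In each subcase $G$ has a fixed point on that curve (respectively on $\Pi$) by Lemma~\ref{lemma:PGL}, Corollary~\ref{corollary:conic}, or Corollary~\ref{corollary:curve-deg-le-3}, and this point lies on $X$.

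The main obstacle is precisely the reduction to the abelian case: the borderline value $n=p+1$ must be dispatched via the rigidity of Lemma~\ref{lemma:Heisenberg-plus-one}(i) together with the reducedness of $X$, and the case $n=p$ relies on the sharp non-existence of low-degree semi-invariants in Lemma~\ref{lemma:Heisenberg}; without the bound $p\ge n-1$ a non-abelian Heisenberg-type action could genuinely have no fixed point. By contrast, once $G$ is abelian the plane-section argument is uniform and does not use the bound on $n$ at all, its only inputs being $p\ge 5$ (so that $p\nmid 4,6$ and $p>3$) and the B\'ezout count on $\Pi\cong\P^2$.
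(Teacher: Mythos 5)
Your proof is correct and follows essentially the same route as the paper's: lift to $\SL(V)$, extract a Sylow $p$-subgroup, use the invariant quadric through $X$ together with Lemmas~\ref{lemma:GL}, \ref{lemma:Heisenberg}, and \ref{lemma:Heisenberg-plus-one}(i) to force abelianness in the three cases $p>n$, $p=n$, $p=n-1$, and then slice with a $3$-dimensional invariant subspace to find a fixed point via Corollary~\ref{corollary:curve-deg-le-3} or a B\'ezout count of $4$ or $6$ points. The only cosmetic differences are that you obtain the semi-invariant quadric by diagonalizing the action on the $2$-dimensional space of quadrics rather than applying Lemma~\ref{lemma:PGL} to the pencil, and you rule out $Q=\ell^2$ by a degree argument where the paper invokes normality of $X$ to bound the singular locus of $Q$; both are equivalent in substance.
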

\begin{proof}
If $X$ is a complete intersection of a quadric and a cubic,
then there is a unique quadric $Q$ passing through $X$, so that $Q$
is $G$-invariant. If $X$ is a complete intersection of two quadrics, then
$G$ acts on the pencil of quadrics passing through $X$;
thus Lemma~\ref{lemma:PGL} implies that there is a $G$-invariant
quadric $Q$ passing through $X$ in this case as well. Since $X$ is normal,
the codimension of the singular locus of $Q$ is at least $2$, so that
the quadric $Q$ is reduced.

Let $\theta\colon\SL(V)\to\PGL(V)$ be the natural homomorphism.
Put~\mbox{$\tilde{G}=\theta^{-1}(G)$}, and let $\bar{G}$ be a Sylow $p$-subgroup
of $\tilde{G}$. One has~\mbox{$\theta(\bar{G})=G$}.

If $p>n$, then the group $\bar{G}$ is abelian
by Lemma~\ref{lemma:GL}. If $p=n$, then $\bar{G}$ is abelian by Lemma~\ref{lemma:Heisenberg}. If $p=n-1$, then $\bar{G}$ is abelian by Lemma~\ref{lemma:Heisenberg-plus-one}(i).

Let $W$ be a three-dimensional $\bar{G}$-invariant subspace of $V$,
and denote by~\mbox{$\Pi\subset\P(V)$} its projectivization.
If $\Pi$ is contained in $X$, then $X$ contains a point fixed by $G$
since $W$ contains a one-dimensional $\bar{G}$-subrepresentation.
Thus we may assume that $\Pi$ is not contained in $X$.

If $\Pi\cap X$ is one-dimensional, let $C$ be the union of its one-dimensional
irreducible components. Then $\deg(C)\le 3$, so that
$G$ has a fixed point on~$C$ by Corollary~\ref{corollary:curve-deg-le-3}.

Thus we may assume that the intersection $\Pi\cap X$ is finite. This means that it consists
of either $4$ or $6$ points (counted with multiplicities).
One of them must be fixed by the group $G$
because $p\ge 5$.
\end{proof}

The following result will be used in the proof of
Lemma~\ref{lemma:Fano-3-fold-small-g}.

\begin{lemma}\label{lemma:three-quadrics}
Let $V$ be a vector space of dimension $n\ge 4$,
and~\mbox{$G\subset\PGL(V)$} be a $p$-group.
Suppose that $G$ preserves a variety $X$
that is a complete intersection of three quadrics in $\P(V)$.
Suppose that $X$ is normal.
Suppose also that~\mbox{$p\ge n-2$} and $p\ge 5$.
Then $G$ has a fixed point on $X$.
\end{lemma}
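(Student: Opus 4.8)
The plan is to mimic the structure of the proof of Lemma~\ref{lemma:complete-int-of-two-quadrics-or-2-3}, replacing the single invariant quadric by a suitable invariant subspace of the net of quadrics, and then cutting $X$ down with an invariant linear subspace on which we can apply the lower-dimensional results already established. First I would pass to a Sylow $p$-subgroup: let $\theta\colon\SL(V)\to\PGL(V)$ be the natural homomorphism, put $\tilde{G}=\theta^{-1}(G)$, and let $\bar{G}$ be a Sylow $p$-subgroup of $\tilde{G}$, so that $\theta(\bar{G})=G$. I would then argue that $\bar{G}$ is abelian using the case analysis on $\dim V=n$ against $p$: if $p>n$ use Lemma~\ref{lemma:GL}; if $p=n$ use Lemma~\ref{lemma:Heisenberg}; if $p=n-1$ use Lemma~\ref{lemma:Heisenberg-plus-one}(i); and in the remaining boundary case $p=n-2$ I would need Lemma~\ref{lemma:Heisenberg-plus-one}, this time invoking part~(ii) to handle the possibility $r=2$. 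This is where the hypotheses that $X$ is normal and is cut out by quadrics should intervene, as explained below.

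Next I would produce an invariant linear section. The variety $X$ is a complete intersection of three quadrics, so it is contained in the three-dimensional net $N\subset\Sym^2 V^\vee$ of quadrics through it, on which $\bar{G}$ (hence $G$) acts. Since $\bar{G}$ is abelian, $N$ decomposes into one-dimensional subrepresentations, giving a $G$-invariant quadric $Q\supset X$; normality of $X$ forces $Q$ to be reduced, as in the previous lemma. Likewise, decomposing $V$ itself into one-dimensional $\bar{G}$-subrepresentations, I would choose a three-dimensional $\bar{G}$-invariant subspace $W\subset V$ with projectivization $\Pi=\P(W)\cong\P^2$. If $\Pi\subset X$ then $X$ contains a fixed point because $W$ contains a one-dimensional subrepresentation, so I may assume $\Pi\not\subset X$.

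Finally I would examine $\Pi\cap X$. If this intersection is one-dimensional, its one-dimensional part $C$ is a plane curve of degree at most the degree of $X$, so $\deg(C)\le 8$; here a direct count of intersection points no longer suffices, so I would instead apply the planar result Corollary~\ref{corollary:curve-deg-le-3} after arguing that $C$, being cut on $X$ by the quadrics, has degree at most $3$ in $\Pi$ — concretely, $\Pi\cap X$ is itself a complete intersection of (at most) three conics in $\P^2$, hence has length at most the product of the relevant degrees, which must be reconciled with $\dim\Pi=2$. If instead $\Pi\cap X$ is finite, it is a scheme of length at most $8$, and since $p\ge 5$ one of its points must be $G$-fixed provided the length is strictly less than $p$; I expect the delicate point to be exactly this numerics. \textbf{The main obstacle} I anticipate is the boundary case $p=n-2$, $r=2$: here Lemma~\ref{lemma:Heisenberg-plus-one}(ii) only guarantees that the semi-invariant defining $Q$ is singular along a $(p-1)$-dimensional locus, and I must combine this with the normality of $X$ and the quadric-generation hypothesis to rule out a large non-reduced or reducible intersection, thereby keeping the degree of $\Pi\cap X$ below $p$ so that the fixed-point count goes through.
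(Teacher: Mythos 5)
Your plan has a fatal geometric flaw in the final step: you cut $X$ with the projectivization $\Pi\cong\P^2$ of a \emph{three}-dimensional invariant subspace, as in Lemma~\ref{lemma:complete-int-of-two-quadrics-or-2-3}. But here $X$ is a complete intersection of \emph{three} quadrics, so $\dim X=n-4$ and $\dim\Pi+\dim X=n-2<n-1=\dim\P(V)$; hence $\Pi\cap X$ may perfectly well be empty, and then your case analysis produces no fixed point at all. This is exactly why the paper takes a \emph{four}-dimensional $\bar{G}$-invariant subspace $W$, so that $\Pi=\P(W)\cong\P^3$ and $\dim\Pi+\dim X=n-1$ forces $\Pi\cap X\neq\varnothing$; when the intersection is finite it is a complete intersection of three quadrics in $\P^3$, of length exactly $8$, and $p\ge 5$ then yields a fixed point by orbit counting (your ``length at most $8$'' belongs to the $\P^3$ picture, not to $\P^2$). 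Working in $\P^3$ also creates two intermediate cases your setup cannot reach: a two-dimensional intersection, handled via a surface of degree at most $2$ and Corollary~\ref{corollary:quadric-surface}, and a one-dimensional intersection, handled via a curve of degree at most $4$ locally cut out by quadrics and Corollary~\ref{corollary:curve-deg-le-4}.

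There is also a logical ordering problem in your first two steps. You propose to prove $\bar{G}$ abelian first and only then extract the invariant quadric $Q$ from the decomposition of the net; but for $p=n$, $p=n-1$, and $p=n-2$ the abelianness argument itself needs a semi-invariant of degree less than $p$ as input to Lemmas~\ref{lemma:Heisenberg} and~\ref{lemma:Heisenberg-plus-one}, and the only available one is $Q$ --- so your route is circular. The paper gets $Q$ first, with no abelianness needed: $G$ acts on the net of quadrics through $X$, which is a $\P^2$, and Lemma~\ref{lemma:PGL} (using $p\ge 5>3$) gives a $G$-fixed point of that $\P^2$, i.e.\ an invariant quadric $Q\supset X$. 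Then your ``main obstacle'' at $p=n-2$ resolves not by bounding the degree of $\Pi\cap X$, as you suggest, but at the abelianness stage: normality of $X$ forces the singular locus of $Q$ to have codimension at least $2$ in $Q$, i.e.\ dimension at most $n-4$, whereas if $\bar{G}$ were non-abelian, Lemma~\ref{lemma:Heisenberg-plus-one}(ii) applied to the degree-$2$ semi-invariant defining $Q$ would make $Q$ singular in dimension at least $p-1=n-3$, a contradiction (similarly, part~(i) handles $p=n-1$, since a double hyperplane is singular everywhere). So the skeleton of your proposal is right, but as written the argument both runs in a circle at the abelianness step and can terminate with an empty intersection.
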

\begin{proof}
The group $G$ acts on the projective plane parameterizing quadrics passing through
$X$; thus Lemma~\ref{lemma:PGL} implies that there is a $G$-invariant
quadric $Q$ passing through $X$. Since $X$ is normal, we see that
the codimension of the singular locus of $Q$ is at least~$2$, i.e.
the singular locus of $Q$ has dimension at most~\mbox{$n-4$}.

Let $\theta\colon\SL(V)\to\PGL(V)$ be the natural homomorphism.
Put~\mbox{$\tilde{G}=\theta^{-1}(G)$}, and let $\bar{G}$ be a Sylow $p$-subgroup
of $\tilde{G}$. One has~\mbox{$\theta(\bar{G})=G$}.
The group $\bar{G}$ is abelian by Lemma~\ref{lemma:GL} if
$p>n$, by Lemma~\ref{lemma:Heisenberg} if $p=n$,
by Lemma~\ref{lemma:Heisenberg-plus-one}(i) if $p=n-1$ and by Lemma~\ref{lemma:Heisenberg-plus-one}(ii)
if $p=n-2$.
Let $W$ be a four-dimensional $\bar{G}$-invariant subspace of $V$,
and let $\Pi\subset\P(V)$ be its projectivization.
If $\Pi$ is contained in $X$, then $X$ contains a point fixed by $G$
since $W$ contains a one-dimensional $\bar{G}$-subrepresentation.
Thus we may assume that $\Pi$ is not contained in $X$.

If $\Pi\cap X$ is two-dimensional, let $S$ be the union of its two-dimensional
irreducible components. Then $\deg(S)\le 2$, so that $G$
has a fixed point on~$S$
by Corollary~\ref{corollary:quadric-surface}.

If $\Pi\cap X$ is one-dimensional, let $C$ be the union of its one-dimensional
irreducible components. Then $\deg(C)\le 4$, and for a general point $P$ of (every irreducible component of) $C$ there is a neighbourhood $U_P$ such that $C\cap U_P$ is cut out by quadrics.
so that $G$ has a fixed point on $C$ by Corollary~\ref{corollary:curve-deg-le-4}.

Thus we may assume that the intersection $\Pi\cap X$ is finite. This means that it consists
of $8$ points (counted with multiplicities).
One of them must be fixed by the group $G$
because $p\ge 5$.
\end{proof}

The following result will be used in the proof of Lemma~\ref{lemma:g-ge-9}.

\begin{lemma}\label{lemma:g-3-deg-7}
Let $\mumu_5\subset\PGL_4(\Bbbk)$ be a subgroup
such that there is a smooth $\mumu_5$-invariant
curve $C\subset\P^3$ of genus $3$. Then $\deg(C)\neq 7$.
\end{lemma}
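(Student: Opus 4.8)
The plan is to argue that the hypotheses force $\mumu_5$ to act faithfully on the abstract curve $C$, and then to show that a smooth curve of genus $3$ carries no automorphism of order $5$; the degree $7$ enters only to guarantee faithfulness. Concretely, I would first establish faithfulness. The homomorphism $\mumu_5\to\Aut(C)$ has kernel a subgroup of a group of prime order, so it is either trivial or all of $\mumu_5$. In the latter case $C$ would be contained in the fixed locus $\Fix(\mumu_5)\subset\P^3$. Lifting $\mumu_5\subset\PGL_4(\Bbbk)$ to $\GL_4(\Bbbk)$ (the Schur multiplier of a cyclic group is trivial) and diagonalizing, $\Fix(\mumu_5)$ is the union of the projectivizations of the eigenspaces, each a proper linear subspace since the action on $\P^3$ is non-trivial. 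But a smooth curve of genus $3$ and degree $7$ is non-degenerate: it cannot lie in a line, and a smooth plane curve of degree $7$ has genus $15\neq 3$, so it cannot lie in a plane either. Hence $C\not\subset\Fix(\mumu_5)$, the kernel is trivial, and $\mumu_5\hookrightarrow\Aut(C)$. This is the only place the degree $7$ is used.

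Next I would apply Riemann--Hurwitz to the quotient map $\pi\colon C\to C'=C/\mumu_5$. Since $5$ is prime, every fixed point of $\mumu_5$ on $C$ is totally ramified, so with $g'=g(C')$ and $f$ the number of fixed points one obtains
\[
4 \;=\; 5\bigl(2g'-2\bigr)+4f .
\]
The only solution with $g',f\ge 0$ is $g'=1$ and $f=1$; thus $C'$ is an elliptic curve and $\pi$ is branched over a single point, with full ramification there.

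Finally I would rule this configuration out. A connected cyclic cover of an elliptic curve $E$ of degree $5$, totally ramified over points $b_1,\dots,b_f$ with local monodromies $m_i\in\{1,2,3,4\}$, exists only if $\OOO_E\bigl(\sum m_i b_i\bigr)$ is a fifth power in $\Pic(E)$; comparing degrees gives $5\mid\sum m_i$. For $f=1$ this says $5\mid m_1$ with $0<m_1<5$, which is impossible. (Equivalently, in $\pi_1\bigl(E\setminus\{b_1\}\bigr)$ the loop around $b_1$ is a product of commutators, hence dies in the abelian quotient $\mumu_5$ and cannot be a generator.) This contradiction shows that no curve $C$ as in the statement exists, so in particular $\deg(C)\neq 7$.

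The computations are all routine; the step I would single out is the last one, the non-existence of a $\mumu_5$-cover of an elliptic curve with a single total ramification point — everything else, faithfulness and Riemann--Hurwitz, serves to reduce to exactly this arithmetic obstruction. I would also note that the argument in fact excludes every degree for which a genus-$3$ curve is non-degenerate in $\P^3$, the genuine exception being the degenerate case of a plane quartic, which has degree $4$.
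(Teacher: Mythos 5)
Your proposal is correct, and it departs from the paper's argument at the decisive step. Both proofs begin the same way: non-degeneracy of a smooth curve of genus $3$ and degree $7$ in $\P^3$ gives faithfulness of the $\mumu_5$-action, and the Hurwitz formula then forces the quotient to be an elliptic curve with exactly one, totally ramified, fixed point. From there the paper stays inside projective geometry: it lifts $\mumu_5$ to $\GL_4(\Bbbk)$, diagonalizes to obtain four invariant planes $H_1,\ldots,H_4$ with $H_1\cap\ldots\cap H_4=\varnothing$, and notes that each $H_i\cap C$ consists of $7$ points counted with multiplicity; since $\mumu_5$-orbits have size $1$ or $5$ and $5\nmid 7$, each $H_i$ contains a $\mumu_5$-fixed point of $C$, and the empty common intersection of the $H_i$ then produces a second fixed point, contradicting uniqueness. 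You instead discard the embedding after the Hurwitz step and rule out the abstract configuration: a connected $\mumu_5$-cover of an elliptic curve totally ramified over a single point cannot exist, because the loop around the branch point is a commutator in $\pi_1\bigl(E\setminus\{b_1\}\bigr)$ and hence dies in the abelian deck group (equivalently, $5$ must divide the local monodromy exponent $m_1\in\{1,2,3,4\}$). Your route proves more: no smooth genus-$3$ curve admits an automorphism of order $5$ at all, with the degree $7$ entering only through non-degeneracy; the paper's argument uses the degree a second time (via $5\nmid 7$) but remains entirely within the elementary invariant-linear-subspace technique used throughout its Section 3 (cf.\ Lemma~\ref{lemma:L-cap-R}). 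Both are complete proofs of the stated lemma.
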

\begin{proof}
Suppose that $\deg(C)=7$. Note that $C$ is not contained in a plane.
Hence the action of $\mumu_5$ on $C$ is faithful.
It follows from the Hurwitz formula that $\mumu_5$ has a unique
fixed point on $C$. Let us denote it by $P$.

It is easy to see that there is an embedding $\mumu_5\subset\GL_4(\Bbbk)$
that induces our action of $\mumu_5$ on $\P^3$;
for instance, one can take any preimage~$\tilde{g}$ in~\mbox{$\SL_4(\Bbbk)$}
of a generator $g$ of $\mumu_5\subset\PGL_4(\Bbbk)$, and consider the group
generated by~$\tilde{g}^4$.
The corresponding
four-dimensional vector space
splits as a sum of four one-dimensional $\mumu_5$-representations.
Let $H_1,\ldots,H_4$ be $\mumu_5$-invariant planes in $\P^3$
such that $H_1\cap\ldots\cap H_4=\varnothing$. Then
the intersection of~$H_i$ with $C$ consists of $7$ points
(counted with multiplicities), so that~\mbox{$H_i\cap C$} contains
at least one $\mumu_5$-fixed point, say,~$P_i$. At least one of the
points~\mbox{$P_1,\ldots,P_4$} is different from~$P$, which gives a contradiction.
\end{proof}

\section{Extremal contractions}
\label{section:extremal}

In this section we adapt the results of \cite[\S3]{ProkhorovShramov-RC}
for $p$-groups. We will use the following notation. Let $L(n)$ be a minimal integer
such that for any rationally connected variety $X$ of dimension $n$
and any $p$-group~\mbox{$G\subset\Aut(X)$} with~\mbox{$p>L(n)$}
the group $G$ has a fixed point on~$X$.
This definition makes sense by \cite[Theorem~4.2]{ProkhorovShramov-RC} and~\cite[Theorem~1.1]{Birkar}.
Note that~\mbox{$L(1)=2$} by Corollary~\ref{corollary:rational-curve}.

\begin{lemma}
\label{lemma:G-MMP}
Let $X$ be a variety of dimension $n$, and $G\subset\Aut(X)$ be a $p$-group.
Suppose that~$X$ has terminal $G\Q$-factorial singularities.
Let~\mbox{$f\colon X\dasharrow Y$} be a birational map that is a result of a
$G$-Minimal Model Program ran on~$X$.
Suppose that $G$ has a fixed point on~$Y$, and~\mbox{$p>L(n-1)$}.
Then $G$ has a fixed point on~$X$.
\end{lemma}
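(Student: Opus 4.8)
The plan is to track a fixed point of $G$ backwards through the individual steps of the $G$-Minimal Model Program that compose the birational map $f\colon X\dasharrow Y$. The map $f$ factors as a sequence of elementary $G$-equivariant operations --- divisorial contractions, flips (more precisely, flips, flops, or other log-flipping-type modifications), and possibly a final Mori fiber space contraction --- and the strategy is to show that each such step has the ``fixed-point lifting'' property: if $G$ has a fixed point on the target of one step, then it has a fixed point on the source. Composing these liftings from $Y$ back to $X$ then yields the conclusion. This is exactly the program carried out in~\cite[\S3]{ProkhorovShramov-RC}, and my task is to verify that the relevant lemmas go through for $p$-groups under the hypothesis~\mbox{$p>L(n-1)$}.

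\emph{First} I would handle a single elementary $G$-equivariant step~\mbox{$X'\dasharrow X''$} in the program. For a \emph{flip} or \emph{flop}, there is a small modification through a common contraction~\mbox{$X'\to Z\leftarrow X''$}; a $G$-fixed point downstairs on $X''$ maps to a $G$-fixed point on $Z$, and since the flipping/flopped locus is covered by rational curves, lifting the fixed point on $Z$ back to $X'$ reduces to finding a $G$-fixed point on the exceptional fibre, whose general component is rational of dimension~$<n$; here the bound $L(n-1)$ (together with the lower-dimensional fixed-point results, ultimately resting on Corollary~\ref{corollary:rational-curve}) guarantees such a point exists. For a \emph{divisorial contraction}~\mbox{$g\colon X'\to X''$} with exceptional divisor $E$, a $G$-fixed point $Q\in X''$ has preimage $g^{-1}(Q)$ which is $G$-invariant; if $Q$ lies off the image of $E$ the lift is trivial, and if $Q$ is in the center of the contraction then $g^{-1}(Q)$ is a rationally connected variety of dimension at most~\mbox{$n-1$}, on which $G$ acts and thus has a fixed point precisely because~\mbox{$p>L(n-1)$}.

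\emph{The key technical input} throughout is the hypothesis that $X$ has terminal $G\QQ$-factorial singularities, which is preserved along the program and guarantees that the intermediate varieties again have terminal $G\QQ$-factorial singularities and that the exceptional loci of the contractions are (uniruled, in fact) covered by rational curves so that their fibres are rationally connected of strictly smaller dimension. This is what allows the inductive application of the definition of~$L(n-1)$ at each step. \emph{After} establishing the lifting property for one elementary step, the proof concludes by downward induction on the number of steps in the program: starting from the assumed fixed point on $Y$, lift it successively through each flip, flop, and divisorial contraction until one reaches a fixed point on $X$ itself.

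\emph{The main obstacle} I anticipate is the flip/flop case rather than the divisorial one. For a divisorial contraction the fibre over the fixed point is genuinely a positive-dimensional rationally connected variety and the bound $L(n-1)$ applies cleanly; but for a small modification the fixed point on the base $Z$ may be a singular point whose fibre in $X'$ is only a union of rational curves (possibly of dimension as small as~$1$), and one must be careful that $G$ acts on this fibre in a way that still produces a fixed point --- this is where the precise covering of the flipping locus by rational curves and the case $L(1)=2$ become essential, and where the argument of~\cite[\S3]{ProkhorovShramov-RC} must be checked to adapt verbatim to the $p$-group setting.
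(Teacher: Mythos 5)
Your proposal is correct and takes essentially the same route as the paper: the paper's proof likewise decomposes $f$ into elementary $G$-contractions and $G$-flips, produces for each step a $G$-invariant rationally connected subvariety $Z\subsetneq X$ of dimension at most $n-1$ (citing \cite[Corollaries~3.7 and~3.8]{ProkhorovShramov-RC} for precisely the fiber-over-a-fixed-point arguments you sketch), and then concludes by the definition of $L(n-1)$. The only difference is that you re-derive the content of those two corollaries at sketch level rather than citing them, which changes nothing of substance.
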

\begin{proof}
The rational map $f$ is a composition of $G$-contractions
(see~\mbox{\cite[\S2]{ProkhorovShramov-RC}} for a precise definition)
and $G$-flips, so it is enough to prove the assertion for a
$G$-contraction and for a $G$-flip.
If $f\colon X\to Y$ is a $G$-contraction, then
there is a $G$-invariant rationally connected
subvariety~\mbox{$Z\subsetneq X$}
by~\cite[Corollary~3.7]{ProkhorovShramov-RC}.
If $f\colon X\to Y$ is a $G$-flip, then
there is a $G$-invariant rationally connected subvariety
$Z\subsetneq X$ by~\cite[Corollary~3.8]{ProkhorovShramov-RC}.
In any case, one has~\mbox{$\dim Z\le n-1$}, so that $G$ has a fixed point on $Z$.
\end{proof}

Recall that a $G$-equivariant morphism $\phi\colon X\to S$ of normal
varieties acted on by a finite group
$G$ is a \emph{$G$-Mori fiber space},
if $X$ has terminal $G\Q$-factorial singularities, one has~\mbox{$\dim(S)<\dim(X)$},
the fibers of $\phi$ are connected, the anticanonical divisor~$-K_X$
is $\phi$-ample, and the relative $G$-invariant Picard number
$\rho^G(X/S)$ equals~$1$.
If the dimension of~$X$ equals~$3$, there are three cases:
\begin{itemize}
\item
$S$ is a point, $-K_X$ is ample; in this case $X$ is said to be a $G\Q$-Fano threefold,
and $X$ is a $G$-Fano threefold provided that the singularities of $X$ are Gorenstein;
\item
$S$ is a curve, a general fiber of $\phi$ is a del Pezzo surface; in this case $X$ is said to be a $G\Q$-del Pezzo fibration;
\item
$S$ is a surface, the general fiber of $\phi$ is a rational curve; in this case $X$ is said to be a $G\Q$-conic bundle.
\end{itemize}
Similarly to Lemma~\ref{lemma:G-MMP}, we prove the following result.

\begin{corollary}\label{corollary:MFS}
Let $X$ be a rationally connected variety, and $G\subset\Aut(X)$ be a $p$-group.
Suppose that $\phi\colon X\to S$ is a $G$-Mori fiber space with $\dim S>0$.
Suppose that $p>L(n-1)$.
Then $G$ has a fixed point on $X$.
\end{corollary}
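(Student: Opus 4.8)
The plan is to follow the pattern of Lemma~\ref{lemma:G-MMP}, the only new ingredient being that we must first produce a $G$-fixed point on the base $S$ rather than assume one. First I would note that $S$ is rationally connected, being the image of the rationally connected variety $X$ under the dominant morphism $\phi$. Since $\phi$ is a fibration we have $\dim S<\dim X=n$, hence $\dim S\le n-1$, and since $\phi$ is $G$-equivariant the group $G$ acts on $S$.

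The next step is to find a fixed point on $S$ and lift it. For this I would use that the function $L$ is non-decreasing: given a rationally connected variety $X'$ of dimension $m\le n-1$ acted on by $G$, the product $X'\times\PP^{\,n-1-m}$ with trivial action on the second factor is rationally connected of dimension $n-1$, and any $G$-fixed point on it projects to a $G$-fixed point on $X'$, so that $L(m)\le L(n-1)$. Consequently $p>L(n-1)\ge L(\dim S)$, and therefore $G$ has a fixed point $s\in S$. Now $\phi$ is precisely a $G$-contraction of fiber type, and applying~\cite[Corollary~3.7]{ProkhorovShramov-RC} to $\phi$ together with the fixed point $s$ produces a $G$-invariant rationally connected subvariety $Z\subsetneq X$; being a proper subvariety, $\dim Z\le n-1$. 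Finally, since $p>L(n-1)\ge L(\dim Z)$, the definition of $L$ yields a $G$-fixed point on $Z$, and hence on $X$.

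I expect the genuine content to be concentrated in the appeal to~\cite[Corollary~3.7]{ProkhorovShramov-RC}. The general fiber of $\phi$ is a Fano variety and thus visibly rationally connected, but the special fiber $\phi^{-1}(s)$ over the fixed point may well be reducible or non-normal, so one cannot apply the definition of $L$ to this fiber directly. The role of that corollary is precisely to extract, in a $G$-equivariant way, an honest rationally connected subvariety $Z$ from the fiber; once $Z$ is in hand the argument is identical to that of Lemma~\ref{lemma:G-MMP}. The remaining points---rational connectedness of $S$ and monotonicity of $L$---are routine, and the hypothesis $\dim S>0$ is exactly what turns $S$ into a positive-dimensional variety on which a fixed point must be sought, the case $\dim S=0$ being the $G\Q$-Fano situation treated separately.
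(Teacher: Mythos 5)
Your proposal is correct and follows essentially the same route as the paper: rational connectedness of $S$, a $G$-fixed point on $S$, then \cite[Corollary~3.7]{ProkhorovShramov-RC} to produce a $G$-invariant rationally connected subvariety $Z\subsetneq X$ of dimension at most $n-1$, on which the definition of $L$ yields a fixed point. Your explicit verification that $L$ is non-decreasing (via the product with a projective space) is a detail the paper leaves implicit, but it does not change the argument.
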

\begin{proof}
Since $X$ is rationally connected, $S$ is rationally connected as well.
Thus the group $G$ has a fixed point on $S$.
Hence there is a $G$-invariant rationally connected subvariety
$Z\subsetneq X$ by~\cite[Corollary~3.7]{ProkhorovShramov-RC}.
One has~\mbox{$\dim Z\le n-1$}, so that $G$ has a fixed point on $Z$.
\end{proof}

\section{Surfaces}
\label{section:surfaces}

In this section we collect some facts about $p$-groups acting on surfaces.
In particular, we prove Propositions~\ref{proposition:Cr2} and~\ref{proposition:Cr2-fixed-point}.

\begin{lemma}\label{lemma:dP}
Let $S$ be a smooth del Pezzo surface, and $G\subset\Aut(S)$
be a $p$-group.
Suppose that $p\ge 5$. Then $G$ has a fixed point on $S$.
\end{lemma}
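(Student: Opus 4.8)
The plan is to run through the possibilities for $S$ according to its degree $d=K_S^2\in\{1,\dots,9\}$, using in each case the anticanonical model of $S$ together with the projective-geometric lemmas already established. No reduction to the $G$-minimal case is needed, since every smooth del Pezzo surface is of one of these nine types.

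I would first dispose of the large-degree cases. If $d=9$ then $S\cong\P^2$ and Lemma~\ref{lemma:PGL} gives a fixed point, as $p\ge5>3$. If $d=8$ then either $S\cong\P^1\times\P^1$, where Corollary~\ref{corollary:P1xP1} applies, or $S\cong\mathbb{F}_1$, whose unique $(-1)$-curve $E$ is $G$-invariant and carries a $G$-fixed point by Lemma~\ref{lemma:PGL}. For $d\in\{6,7\}$ the group of isometries of $\Pic(S)$ fixing $K_S$ is finite of order $12$ and $2$ respectively, so the image of the $p$-group $G$ is trivial for $p\ge5$; hence every $(-1)$-curve is $G$-invariant, and the intersection point of two meeting $(-1)$-curves (a vertex of the hexagon of lines for $d=6$, or $E_i\cap\widetilde{L}$ for $d=7$) is $G$-fixed.

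For the intermediate and small degrees the anticanonical embedding is decisive. When $d=4$ the surface $S$ is the complete intersection of two quadrics in $\P^4=\P(V)$ with $\dim V=5$, and since $S$ is smooth (hence normal), $p\ge\dim V-1=4$ and $p\ge5$, Lemma~\ref{lemma:complete-int-of-two-quadrics-or-2-3} produces a $G$-fixed point. When $d=3$ the surface $S$ is a cubic hypersurface in $\P^3=\P(V)$ with $\dim V=4\le p$ and $\deg S=3<p$, so Lemma~\ref{lemma:proj-Heisenberg} applies. When $d=2$ the anticanonical morphism realizes $S$ as a double cover of $\P^2$ branched over a quartic; as the deck involution has order $2$ coprime to $p$, the group $G$ embeds into $\PGL_3(\Bbbk)$ and fixes a point $\bar P\in\P^2$ by Lemma~\ref{lemma:PGL}, whose preimage is a $G$-stable set of at most two points and is therefore fixed pointwise because $p$ is odd. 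When $d=1$ the linear system $|-K_S|$ is a pencil with a single base point, which is automatically $G$-fixed.

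The genuinely exceptional case, and the one I expect to be the main obstacle, is $d=5$: here $S$ is neither a hypersurface nor a complete intersection of a type covered by the lemmas above, and none of the preceding tricks applies. Since $\Aut(S)\cong\mathfrak{S}_5$, a Sylow $p$-subgroup is trivial for $p\ge7$ and isomorphic to $\mumu_5$ for $p=5$, so we may assume $G\cong\mumu_5$; this $G$ fixes none of the ten $(-1)$-curves. I would settle this case by a fixed-point count: the Euler characteristic satisfies $\chi(S)=12-d=7$, and for the action of the group $\mumu_5$ of prime order one has $\chi(\Fix G)\equiv\chi(S)\equiv2\pmod5$, so $\Fix G\neq\varnothing$. (Equivalently, a holomorphic Lefschetz computation using $H^0(\mathcal{O}_S)=\Bbbk$ and $H^i(\mathcal{O}_S)=0$ for $i>0$ forces the fixed locus to be nonempty.) The remaining bookkeeping---checking that the excluded primes really do leave only these cases, and that the degree-$5$ Euler-characteristic count is insensitive to the base field via $\ell$-adic cohomology---is routine.
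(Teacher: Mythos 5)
Your proof is correct, and its skeleton largely coincides with the paper's: both argue case by case on the degree $d=K_S^2$, treat $S\cong\P^2$, $S\cong\P^1\times\P^1$, $d=2$ and $d=1$ identically (Lemma~\ref{lemma:PGL}, Corollary~\ref{corollary:P1xP1}, the fiber over a fixed point of the anticanonical double cover, and the base point of $|-K_S|$), and both isolate the same exceptional case $d=5$, $p=5$, which is resolved via $\Aut(S)\cong\mathfrak{S}_5$ together with a Lefschetz-type argument (you use the congruence $\chi(\Fix G)\equiv\chi(S)\pmod p$, the paper uses the holomorphic Lefschetz fixed-point formula; these are interchangeable here since $H^i(S,\mathcal{O}_S)=0$ for $i>0$). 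Where you genuinely diverge is the treatment of the intermediate degrees $3\le d\le 8$: the paper disposes of all of them at once by noting that the number of $(-1)$-curves ($1,3,6,10,16,27$ for $d=8,\dots,3$) is coprime to $p$ except when $(d,p)=(5,5)$, so some $(-1)$-curve is $G$-invariant and Lemma~\ref{lemma:PGL} finishes; you instead handle $d=6,7,8$ by observing that $G$ acts trivially on $\Pic(S)$ (the isometry group fixing $K_S$ has order coprime to $p$) and then intersecting invariant lines, and you handle $d=3,4$ through the anticanonical embedding via Lemmas~\ref{lemma:proj-Heisenberg} and~\ref{lemma:complete-int-of-two-quadrics-or-2-3}. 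Both routes are sound: the paper's counting argument is shorter and uniform, and keeps the surface case independent of the heavier projective machinery of \S\ref{section:linear-algebra}, while your version avoids relying on the tabulated line counts, and your $\Pic$-triviality observation for $d=6,7$ gives somewhat more (every $(-1)$-curve is invariant, not just one).
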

\begin{proof}
If $S\cong\P^2$, then $G$ has a fixed point
on $S$ by Lemma~\ref{lemma:PGL}.
If~\mbox{$S\cong\P^1\times\P^1$},
then $G$ has a fixed point on $S$ by Corollary~\ref{corollary:P1xP1}.
Therefore, we may assume that $S$ is obtained from
$\P^2$ by blowing up $1\le r\le 8$ points.
Put~\mbox{$d=K_S^2=9-r$}.

If $d=1$, then $G$ fixes the unique base point of the linear system
$|-K_S|$. If $d=2$, then the anticanonical linear system gives a $G$-equivariant double cover
$\phi\colon S\to\P^2$. We know from Lemma~\ref{lemma:PGL} that there is a $G$-invariant
point $P\in\P^2$, so that the fiber $\phi^{-1}(P)$
consists of $G$-invariant points on~$S$.
Thus we will assume that $3\le d\le 8$.
We know the number of $(-1)$-curves on~$S$:
see e.g.~\cite[Theorem~IV.4.3(c)]{Manin-CubicForms}
for degrees $d\le 6$. If this number is not divisible by $p$,
then there is a $G$-invariant $(-1)$-curve $C\cong\P^1$ on~$S$,
and $G$ has a fixed point on $C$ by Lemma~\ref{lemma:PGL}.
This happens unless~\mbox{$p=5$} and $d=5$.
In the latter case one has $\Aut(S)\cong\mathfrak{S}_5$, see e.g.~\cite[Theorem~8.5.8]{Dolgachev-ClassicalAlgGeom}.
This means that $G$ is a cyclic group, and $G$ has a fixed point on $S$
by the holomorphic Lefschetz fixed-point formula.
\end{proof}

\begin{lemma}\label{lemma:smooth-rational-surface}
Let $S$ be a smooth rational surface, and $G\subset\Aut(S)$ be a $p$-group.
Suppose that $p\ge 5$. Then $G$ has a fixed point on $S$.
\end{lemma}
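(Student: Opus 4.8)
The plan is to run a $G$-equivariant Minimal Model Program on $S$ and reduce the assertion to the case of a $G$-minimal model, where a fixed point is easy to produce, and then lift this fixed point back to $S$. Since $S$ is smooth, it has terminal $G\Q$-factorial singularities, and the $G$-MMP on a smooth surface consists of contractions of $G$-orbits of mutually disjoint $(-1)$-curves. Let $f\colon S\to\bar{S}$ be the resulting $G$-equivariant birational morphism onto a $G$-minimal smooth rational surface $\bar{S}$. By the classical classification of $G$-minimal smooth rational surfaces, either $\bar{S}$ is a del Pezzo surface with $\rho^G(\bar{S})=1$, or $\bar{S}$ carries a $G$-equivariant conic bundle structure $\phi\colon\bar{S}\to\P^1$.

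In the del Pezzo case, the group $G$ has a fixed point on $\bar{S}$ immediately by Lemma~\ref{lemma:dP}, which covers exactly this situation for $p\ge 5$. In the conic bundle case, since $p\ge 5>2$, Lemma~\ref{lemma:PGL} provides a $G$-fixed point $b\in\P^1$ on the base. The fiber $\phi^{-1}(b)$ is then a $G$-invariant (possibly reducible) conic, and Corollary~\ref{corollary:conic} yields a $G$-fixed point on $\phi^{-1}(b)$, hence on $\bar{S}$. Thus in both cases $G$ has a fixed point on the $G$-minimal model $\bar{S}$.

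It remains to lift this fixed point along $f$. Here one invokes Lemma~\ref{lemma:G-MMP}: the surface $S$ has terminal $G\Q$-factorial singularities, and since $L(1)=2$ (recorded right after the definition of $L(n)$, using Corollary~\ref{corollary:rational-curve}) the hypothesis $p>L(n-1)=L(1)=2$ holds for all $p\ge 5$. Therefore the existence of a $G$-fixed point on $\bar{S}$ forces the existence of a $G$-fixed point on $S$. Concretely, the contraction $f$ collapses $G$-orbits of curves to $G$-orbits of points, so the preimage of a $G$-fixed point of $\bar{S}$ is either a single $G$-fixed point of $S$ or a $G$-invariant rational curve on which $G$ again has a fixed point.

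The only genuinely new ingredient beyond Lemma~\ref{lemma:dP} is the conic bundle case, and this is precisely where the obstacle lies: unlike the del Pezzo alternative, a $G$-minimal conic bundle need not be a Fano surface, so one cannot appeal directly to Lemma~\ref{lemma:dP} and must instead analyze the fiber over a fixed point of the base. This, however, reduces cleanly to the conic statement of Corollary~\ref{corollary:conic}, so the argument goes through once the structural dichotomy of $G$-minimal models is in place.
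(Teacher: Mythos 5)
Your proof is correct and follows essentially the same route as the paper: run the $G$-MMP, split into the del Pezzo case (handled by Lemma~\ref{lemma:dP}) and the conic bundle case, and lift the fixed point back via Lemma~\ref{lemma:G-MMP} using $L(1)=2$. The only cosmetic difference is in the conic bundle step, where the paper invokes Corollary~\ref{corollary:MFS} together with Corollary~\ref{corollary:rational-curve}, while you argue directly that the fiber over a fixed point of the base is a $G$-invariant conic and apply Corollary~\ref{corollary:conic} -- an equally valid (and slightly more self-contained) way to reach the same conclusion.
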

\begin{proof}
Let $\pi\colon S\to S'$ be a result of a $G$-Minimal Model Program
ran on~$S$. Then either $S'$ is a del Pezzo surface,
or there is a $G\QQ$-conic bundle structure~\mbox{$\phi\colon S'\to \P^1$} (see \cite[Theorem~1G]{Iskovskikh-1979s-e}).
In the former case $G$ acts on~$S'$
with a fixed point by Lemma~\ref{lemma:dP}.
In the latter case $G$ has a fixed point on~$S'$ by Corollary~\ref{corollary:MFS}
and Corollary~\ref{corollary:rational-curve}.
Now Lemma~\ref{lemma:G-MMP} implies that $G$ has a fixed point on $S$.
\end{proof}

Now we are ready to prove Proposition~\ref{proposition:Cr2-fixed-point}.

\begin{proof}[Proof of Proposition~\xref{proposition:Cr2-fixed-point}]
The minimal resolution of singularities of $S$ is $G$-equivariant.
Keeping in mind that an image of a $G$-fixed point with respect
to any $G$-equivariant morphism is again a $G$-fixed point, we may assume
that $S$ is smooth. Now the assertion follows from
Lemma~\ref{lemma:smooth-rational-surface}.
\end{proof}

Finally, we prove Proposition~\ref{proposition:Cr2}.

\begin{proof}[Proof of Proposition~\xref{proposition:Cr2}]
Regularizing the (rational) action of $G$
(see \cite[Lemma-Definition~3.1]{Prokhorov-Shramov-2013}),
we may assume that $G$ acts by automorphisms of a smooth rational surface $S$.
By Proposition~\ref{proposition:Cr2-fixed-point}
(or by Lemma~\ref{lemma:smooth-rational-surface})
there is a $G$-fixed point
on $S$. Now everything follows from Remark~\ref{remark:Aut-P} and Lemma~\ref{lemma:GL}.
\end{proof}

\begin{remark}
Theorem~\ref{theorem:Beauville}
shows that the assertions of Propositions~\ref{proposition:Cr2}
and~\ref{proposition:Cr2-fixed-point} fail without
the assumption~\mbox{$p\ge 5$}.
See also the examples of~\S\ref{section:examples}.
\end{remark}

Applying Proposition~\ref{proposition:Cr2-fixed-point} together with Corollary~\ref{corollary:MFS},
we obtain the following result.

\begin{corollary}\label{corollary:MFS-dim-3}
Let $X$ be a rationally connected threefold, and let~\mbox{$G\subset\Aut(X)$} be a $p$-group.
Suppose that $\phi\colon X\to S$ is a $G$-Mori fiber space with~\mbox{$\dim S>0$}.
Suppose that $p\ge 5$.
Then $G$ has a fixed point on $X$.
\end{corollary}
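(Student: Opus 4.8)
The plan is to deduce this directly from Corollary~\ref{corollary:MFS} once the relevant value of the invariant $L$ has been pinned down. Recall that Corollary~\ref{corollary:MFS} produces a $G$-fixed point on the total space of a $G$-Mori fiber space $\phi\colon X\to S$ with $\dim S>0$, provided that $p>L(n-1)$, where $n=\dim X$. In our situation $X$ is a threefold, so $n=3$ and the required inequality becomes $p>L(2)$.

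First I would observe that in dimension two a rationally connected variety is in fact rational, so the invariant $L(2)$ is governed entirely by the behaviour of $p$-groups acting on rational surfaces. Proposition~\ref{proposition:Cr2-fixed-point} asserts precisely that every $p$-group with $p\ge 5$ acting on a projective rational surface has a fixed point. By the very definition of $L(2)$ as the minimal integer such that $p>L(2)$ forces a fixed point on every rationally connected surface of dimension two, this yields $L(2)\le 4$.

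Consequently, the hypothesis $p\ge 5$ of the corollary gives $p>L(2)$, which is exactly the numerical input demanded by Corollary~\ref{corollary:MFS}. Applying that corollary to the $G$-Mori fiber space $\phi\colon X\to S$ with $\dim S>0$ then produces a $G$-fixed point on $X$, and the argument is complete.

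The whole proof is essentially bookkeeping, so there is no serious obstacle; the only point that needs a moment's care is the identification $L(2)\le 4$, that is, confirming that Proposition~\ref{proposition:Cr2-fixed-point} really bounds $L(2)$ by $4$ and not by some larger quantity. The subtlety worth flagging is the passage from the hypothesis ``rational surface'' of Proposition~\ref{proposition:Cr2-fixed-point} to the ``rationally connected'' language used in the definition of $L$ and in Corollary~\ref{corollary:MFS}; since these two notions coincide in dimension two, no genuine difficulty arises, and the two possibilities $\dim S=1$ and $\dim S=2$ are handled uniformly by the single application of Corollary~\ref{corollary:MFS}.
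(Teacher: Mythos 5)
Your proposal is correct and is exactly the paper's argument: the paper derives this corollary in one line by combining Proposition~\ref{proposition:Cr2-fixed-point} (which, since rationally connected surfaces are rational, gives $L(2)\le 4$) with Corollary~\ref{corollary:MFS}. Your extra care about identifying $L(2)$ and about the rational versus rationally connected terminology is just the bookkeeping the paper leaves implicit.
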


We conclude this section by two useful facts about $p$-groups acting on non-rational
surfaces.

\begin{lemma}\label{lemma:K3}
Let $S$ be a $K3$ surface with at worst Du Val singularities,
and let~\mbox{$G\subset\Aut(S)$} be a $p$-group.
Suppose that $p\ge 5$.
Then $G$ has a fixed point on~$S$.
\end{lemma}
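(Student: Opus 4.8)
The plan is to reduce to the case of a smooth K3 surface and then exploit the fact that the topological Euler characteristic of the fixed locus of a $p$-group is congruent to the Euler characteristic of the whole surface modulo $p$.

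First I would pass to the minimal resolution of singularities $\pi\colon\tilde S\to S$. Since $S$ has only Du Val singularities, this resolution is crepant, and being the minimal resolution it is canonical, hence $G$-equivariant; moreover $\tilde S$ is again a K3 surface, now smooth. Because $\pi$ is $G$-equivariant, the image of any $G$-fixed point of $\tilde S$ is a $G$-fixed point of $S$, so it suffices to produce a fixed point on $\tilde S$. In particular we may assume from the start that $S$ is smooth, so that its topological Euler characteristic equals $\chi(S)=24$.

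Next I would invoke the congruence $\chi(S^G)\equiv\chi(S)\pmod p$, valid for any finite $p$-group $G$ acting on a compact space with finitely generated cohomology. This is a standard consequence of Smith theory, and for completeness one can prove it by induction on $|G|$: if $G$ is nontrivial, choose a central subgroup $H\cong\mumu_p$ (which exists since a nontrivial $p$-group has nontrivial center by Lemma~\ref{lemma:basic}(ii)); then $G/H$ acts on the $G$-invariant closed subset $S^H$ with $(S^H)^{G/H}=S^G$, reducing us to the case $G=\mumu_p$. In that case the complement $S\setminus S^{\mumu_p}$ carries a free $\mumu_p$-action, whence $\chi(S\setminus S^{\mumu_p})\equiv 0\pmod p$ and therefore $\chi(S)\equiv\chi(S^{\mumu_p})\pmod p$.

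Finally, since $24=2^3\cdot 3$ is divisible by no prime $p\ge 5$, the congruence yields $\chi(S^G)\equiv 24\not\equiv 0\pmod p$; in particular $\chi(S^G)\neq 0$, so $S^G\neq\varnothing$ and $G$ has a fixed point. The one genuinely nontrivial ingredient is the mod-$p$ congruence for the Euler characteristic of the fixed locus, so the main work lies in citing or establishing it cleanly; everything else is formal. It is worth noting that the hypothesis $p\ge 5$ is precisely the condition $p\nmid 24$, which explains why the statement must fail for $p=2$ and $p=3$ — for instance an Enriques involution acts freely on a K3 surface.
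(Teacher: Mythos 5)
Your proof is correct, but it follows a genuinely different route from the paper. The paper, after the same reduction to the minimal resolution, splits off the subgroup $G_{\mathrm{s}}\subset G$ of symplectic automorphisms: if $G_{\mathrm{s}}$ is trivial then $G\cong G/G_{\mathrm{s}}$ is cyclic and the holomorphic Lefschetz fixed-point formula applies, while if $G_{\mathrm{s}}$ is non-trivial then Nikulin's classification forces $G_{\mathrm{s}}\cong\mumu_p$ with exactly $24/(p+1)<p$ fixed points, which the $p$-group $G/G_{\mathrm{s}}$ cannot permute non-trivially. Your argument replaces all of this K3-specific machinery (Nikulin's theorem on abelian symplectic groups, the fixed-point counts, holomorphic Lefschetz) by the single topological input $\chi(S^G)\equiv\chi(S)\pmod p$ together with $\chi(S)=24$ and $p\nmid 24$. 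What your approach buys is brevity, self-containedness, and generality: it works verbatim for any projective variety whose topological Euler characteristic is prime to $p$, and it makes transparent that $p\ge 5$ is exactly the condition $p\nmid 24$. What the paper's approach buys is finer information (the precise structure of $G$ and the number of fixed points in the symplectic case) using tools it needs elsewhere anyway. Two small points to tighten in your write-up: in the cyclic base case you should either invoke Floyd's theorem directly or work with compactly supported Euler characteristics, since ordinary $\chi$ is additive over the strata $S^{\mumu_p}$ and $S\setminus S^{\mumu_p}$ and multiplicative under free quotients only because these are complex algebraic varieties (where $\chi=\chi_c$); and your closing claim that the failure of the congruence for $p=2,3$ shows the statement \emph{must} fail there is a non sequitur — for $p=2$ the Enriques involution is indeed a counterexample, but for $p=3$ one needs a separate example, e.g. a symplectic action of $\mumu_3\times\mumu_3$, which has no common fixed point because a non-cyclic abelian group does not embed into $\SL_2(\Bbbk)$.
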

\begin{proof}
Replacing $S$ with its minimal resolution we may assume that $S$ is smooth.
Let $G_\mathrm{s}$ be the subgroup of $G$ that acts on $S$
by symplectic automorphisms, i.e. $G_{\mathrm{s}}$ is
the kernel of the induced action of $G$ on $H^{2,0}(S)$
(cf. \cite[Definition 0.2]{Nikulin-1980-aut}).
Then $G/G_{\mathrm{s}}$ is a cyclic group.
If $G_\mathrm{s}$ is trivial, then~$G$ has a fixed point on $S$
by the holomorphic Lefschetz fixed-point formula.
Thus we assume that $G_\mathrm{s}$ is non-trivial.

Suppose that $|G_{\mathrm{s}}|>p$. Then $G_{\mathrm{s}}$ contains a subgroup
$\hat{G}_{\mathrm{s}}$ of order $p^2$ by Lemma~\ref{lemma:basic}(i).
The group $\hat{G}_{\mathrm{s}}$ is abelian, which is impossible by~\cite[Theorem 4.5]{Nikulin-1980-aut}. Therefore,
one has $G_{\mathrm{s}}\cong\mumu_p$.
Moreover, it has exactly~\mbox{$24/(p+1)$}
fixed points on $S$ (see \cite[\S5.1]{Nikulin-1980-aut}).
Since $24/(p+1)<p$ for $p\ge 5$,
these points cannot be permuted by $G/G_{\mathrm{s}}$.
Thus $G$ has a fixed point on~$S$.
\end{proof}

\begin{lemma}\label{lemma:ruled-surface}
Let $S$ be a birationally ruled surface
over an elliptic curve, and $G\subset\Bir(S)$ be a $p$-group.
Suppose that $p\ge 5$. Then~\mbox{$\rr(G)\le 3$}.
\end{lemma}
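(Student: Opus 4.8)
The plan is to use that $S$ is birational to $E\times\PP^1$, so $\Bir(S)\cong\Bir(E\times\PP^1)$, and to exploit that the ruling over $E$ is preserved by every birational selfmap. Restricting a rational map from $E\times\PP^1$ to a curve of positive genus to a general fibre $\{x\}\times\PP^1$ gives a rational map $\PP^1\dashrightarrow(\text{genus}\ge 1)$, which is constant; hence any such map factors through the projection $\pi\colon E\times\PP^1\dashrightarrow E$. Applying this to $\pi\comp\phi$ for $\phi\in\Bir(S)$ shows that every $\phi$ descends to an automorphism of $E$, giving a group homomorphism
\[
\alpha\colon \Bir(S)\longrightarrow\Aut(E).
\]
Its kernel consists of birational selfmaps over $E$, that is, of $\Bbbk(E)$-birational selfmaps of the generic fibre $\PP^1_{\Bbbk(E)}$; since a smooth projective curve has no birational selfmaps beyond its automorphisms, this kernel is $\PGL_2(\Bbbk(E))$. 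I then set $G_0=G\cap\PGL_2(\Bbbk(E))=\ker(\alpha|_G)$, a normal subgroup of $G$, and analyze the two pieces $G_0$ and $G/G_0$ separately.

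For the image $G/G_0\hookrightarrow\Aut(E)$, I would write $\Aut(E)=E\rtimes\Aut(E,O)$, where the point stabilizer $\Aut(E,O)$ is $\mumu_2$, $\mumu_4$, or $\mumu_6$. Since $p\ge 5$ is coprime to $|\Aut(E,O)|$, the image of any $p$-subgroup in the quotient $\Aut(E,O)$ is trivial, so $G/G_0$ lands in the translation subgroup $E$. As $E[p^k]\cong(\ZZ/p^k)^2$, every finite $p$-subgroup of $E$ is abelian and generated by at most two elements; hence $\rr(G/G_0)\le 2$.

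For the kernel, I would use the inclusion $G_0\subset\PGL_2(\Bbbk(E))\subset\PGL_2\big(\overline{\Bbbk(E)}\big)$. Over an algebraically closed field of characteristic $0$ the finite subgroups of $\PGL_2$ are cyclic, dihedral, or one of $A_4,S_4,A_5$. A $p$-group with $p\ge 5$ among these can only be cyclic: the dihedral and the $A_4,S_4$ cases contribute no such subgroups, and the Sylow $p$-subgroup of $A_5$ for $p=5$ is $\mumu_5$. Therefore $G_0$ is cyclic and $\rr(G_0)\le 1$. Finally, lifting a set of generators of $G/G_0$ to $G$ and adjoining a generator of $G_0$ yields a generating set of $G$, so $\rr(G)\le\rr(G_0)+\rr(G/G_0)\le 1+2=3$.

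The step requiring the most care is the construction of $\alpha$ and the identification of its kernel, i.e.\ the birational invariance of the elliptic ruling together with the fact that the selfmaps over $E$ form exactly $\PGL_2(\Bbbk(E))$. Once these are in place, the remaining input is the classification of finite subgroups of $\PGL_2$ over the algebraic closure and the elementary bound $\rr(G)\le\rr(G_0)+\rr(G/G_0)$ for the normal subgroup $G_0\trianglelefteq G$, both of which are routine.
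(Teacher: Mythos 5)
Your proof is correct and takes essentially the same route as the paper: the paper likewise uses the $\Bir(S)$-equivariant ruling $S\dasharrow E$ to produce an exact sequence, observes that the $p$-part of the kernel is a $p$-subgroup of $\PGL_2\big(\Bbbk(E)\big)$ and hence cyclic, and that the image in $\Aut(E)$ consists of translations and is therefore generated by at most two elements. Your write-up simply makes explicit a few points the paper leaves terse (why every birational selfmap descends to $E$, and the $\PGL_2$ classification behind the cyclicity of the kernel).
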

\begin{proof}
There exists a rational map $\phi\colon S\dasharrow E$, where $E$ is an elliptic curve
and a general fiber of $\phi$ is a rational curve. Moreover, $\phi$ is equivariant with
respect to the group $\Bir(S)$. Thus we have an exact sequence
$$
1\to \Gamma_{\phi}\to \Bir(S)\to \Gamma_S\to 1,
$$
where $\Gamma_{\phi}$ acts by fiberwise birational transformations with respect to $\phi$,
and~\mbox{$\Gamma_S\subset\Bir(E)=\Aut(E)$}.
Let $\mathscr{S}$ be the fiber of $\phi$ over the general scheme-theoretic point of $E$.
Then $\mathscr{S}$ is a rational curve over the function
field~\mbox{$\Bbbk(E)$}, so that
$\Gamma_{\phi}\subset\PGL_2\big(\Bbbk(E)\big)$ is a cyclic group. On the other hand,
the elements of $\Gamma_S$ act by translations on $E$, so that
$\Gamma_E$ is generated by at most two elements, and the assertion follows.
\end{proof}

\section{Anticanonical divisors}
\label{section:anticanonical}

In this section we study invariant anticanonical divisors
with respect to $p$-groups acting on Fano threefolds.

\begin{lemma}\label{lemma:invariant-member}
Let $X$ be a terminal Fano threefold, and~\mbox{$G\subset\Aut(X)$} be a $p$-group
such that $X$ is a $G\QQ$-Fano threefold.
Suppose that $G$ has no fixed points on~$X$. Furthermore, suppose that
there exists a $G$-invariant divisor~\mbox{$S\in |-K_X|$}, and that~\mbox{$p\ge 5$}.
Then the pair $(X,S)$ is log canonical and
one of the following cases occurs.
\begin{enumerate}
\renewcommand\labelenumi{(\Alph{enumi})}
\renewcommand\theenumi{(\Alph{enumi})}
\item
The surface $S$ is reducible \textup(and reduced\textup).
The group $G$ acts transitively on the set of irreducible components of~$S$.
\item
The surface $S$ is irreducible and not normal.
Then $S$ is a birationally ruled surface over an elliptic curve.
Let $\Lambda\subset S$ be the non-normal locus and let $\nu\colon S'\to S$
be the normalization. Then $\nu^{-1}(\Lambda)$ is either
a smooth
elliptic curve or a disjoint union of two smooth elliptic curves.
Moreover, $S'$ has at worst Du Val singularities and
$\nu^{-1}(\Lambda)$ is contained in the smooth locus of~$S'$.
Furthermore, $\Lambda$ has at most two irreducible components,
and each of them is a $G$-invariant curve of geometric genus~$1$.
\end{enumerate}
In both cases the action of $G$ on $S$ is faithful.
In case~\textup{(B)} one has $\rr(G)\le 3$.
\end{lemma}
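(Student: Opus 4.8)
The plan is to analyze the $G$-invariant anticanonical divisor $S \in |-K_X|$ according to its irreducibility and normality, producing the dichotomy (A)/(B). First I would observe that since $G$ has no fixed point on $X$, the surface $S$ cannot be irreducible and normal with mild singularities admitting a fixed point; more importantly, if $G$ fixed any component of $S$ and that component had a $G$-fixed point, we would contradict the hypothesis. The log canonicity of $(X,S)$ should follow from general results on anticanonical members of Fano threefolds (the anticanonical divisor of a terminal Fano threefold has at worst log canonical singularities when a suitable member exists), so I would invoke the relevant classification of non-normal or non-klt anticanonical members. The faithfulness of the $G$-action on $S$ follows because the kernel of $G \to \Aut(S)$ would act trivially on an ample divisor, hence trivially on $X$.

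For the case split, I would argue as follows. If $S$ is reducible, then $G$ permutes its irreducible components; if this action were not transitive, $G$ would preserve a proper subdivisor, and one could split off a $G$-invariant component. Since reducedness holds (as $S \in |-K_X|$ on a terminal, hence Gorenstein in the relevant sense, Fano and $\log$ canonicity forces reduced), the orbit structure gives case (A), where transitivity must hold to avoid producing a $G$-invariant sub-configuration on which one finds a fixed point. If $S$ is irreducible but not normal, I would pass to the normalization $\nu \colon S' \to S$ and study the conductor/non-normal locus $\Lambda$. Here the key input is the theory of log canonical non-normal surfaces: the different on $S'$ along $\nu^{-1}(\Lambda)$ is effective and the pair $(S', \mathrm{Diff})$ is log canonical, which forces $\nu^{-1}(\Lambda)$ to be a (disjoint union of) smooth elliptic curve(s) contained in the smooth locus of $S'$, with $S'$ having only Du Val singularities. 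The classification of such birationally ruled non-normal log canonical anticanonical members (an elliptic base arising from the genus-$1$ structure on the gluing curves) gives the structural statements of (B).

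The final bound $\rr(G) \le 3$ in case (B) is where I would lean on the earlier surface results. Since $S$ is birationally ruled over an elliptic curve $E$ and $G$ acts faithfully on $S$, the induced action gives an embedding $G \subset \Bir(S)$, and Lemma~\ref{lemma:ruled-surface} (which requires $p \ge 5$) immediately yields $\rr(G) \le 3$. One subtlety is verifying that the $G$-action on $S$ genuinely descends to an action on the elliptic base with the fiberwise structure compatible with the hypotheses of Lemma~\ref{lemma:ruled-surface}; this should follow from the canonical nature of the ruling (the map $S \dashrightarrow E$ is $\Bir(S)$-equivariant), so no extra work is needed.

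The hard part will be establishing the detailed geometry in case (B) — specifically, controlling the non-normal locus $\Lambda$ and its preimage, and showing $S'$ has only Du Val singularities with $\nu^{-1}(\Lambda)$ smooth elliptic and disjoint from $\operatorname{Sing}(S')$. This rests on the fine classification of non-normal log canonical del Pezzo-type or anticanonical surfaces and the adjunction/different machinery, rather than on representation-theoretic lemmas; the $p$-group hypothesis enters only lightly here (mainly through $p \ge 5$ to rule out small exceptional behavior and to invoke the fixed-point-free assumption). I expect the equivariant refinements — that each component of $\Lambda$ is individually $G$-invariant and of geometric genus $1$, and that $G$ acts transitively on components in case (A) — to require a careful orbit-counting argument using that $p$ is large relative to the (bounded) number of components.
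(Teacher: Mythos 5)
The central gap is your first step: log canonicity of $(X,S)$ is \emph{not} a ``general result on anticanonical members''. A specific $G$-invariant member of $|-K_X|$ could a priori be non-reduced or have a point of large multiplicity, and no classification theorem excludes this; general-member results say nothing about special members. This is precisely where the hypothesis that $G$ has no fixed point must enter. The paper proves the stronger claim that \emph{every} $G$-invariant effective $\QQ$-divisor $D$ with $-(K_X+D)$ nef yields a log canonical pair: otherwise one rescales to a strictly log canonical pair, takes a minimal $G$-center $Z$ of non-klt singularities, and uses Shokurov's connectedness theorem \cite[17.4]{Utah} together with Kawamata's subadjunction to show that $Z$ is either a point (a $G$-fixed point, contradiction) or a rational curve (which carries a $G$-fixed point by Corollary~\ref{corollary:rational-curve}, contradiction). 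Moreover, this version quantified over all $D$ is exactly what proves transitivity in case~(A): if the $G$-orbit $S'$ of a component were a proper subdivisor of $S$, then since $\rho^G(X)=1$ one has $S'\sim_{\QQ}\lambda(-K_X)$ with $\lambda<1$, so $(X,cS')$ with $1<c<1/\lambda$ is a $G$-invariant non-log-canonical pair with $-(K_X+cS')$ ample, a contradiction. Your alternative --- ``transitivity must hold to avoid producing a $G$-invariant sub-configuration on which one finds a fixed point'' --- cannot work: a $G$-invariant surface need not carry a fixed point, as $S$ itself demonstrates.

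Two further steps fail as written. Your faithfulness argument (``trivial on an ample divisor, hence trivial on $X$'') is false: a reflection of $\P^3$ acts trivially on a hyperplane but not on $\P^3$. The correct argument uses the geometry already established in (A)/(B): in both cases $S$ has one-dimensional singular locus while $\Sing(X)$ is finite ($X$ is terminal), so an element $\gamma$ acting trivially on $S$ fixes a point $P\in\Sing(S)$ at which $X$ is smooth; there $T_P(S)=T_P(X)$, hence $\gamma$ acts trivially on $T_P(X)$, contradicting Remark~\ref{remark:Aut-P}. Second, in case~(B) the elliptic structure does not come from a classification of non-normal log canonical anticanonical surfaces --- such surfaces can equally well be ruled over a rational curve with rational conductor. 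The non-rationality of $S'$ and the ellipticity of the conductor curves are again consequences of the no-fixed-point hypothesis (via Proposition~\ref{proposition:Cr2-fixed-point} and Corollary~\ref{corollary:rational-curve}), combined with Kawakita's inversion of adjunction, Shokurov's bound \cite[Theorem~6.9]{Shokurov-1992-e-ba} on the number of connected components of the log canonical locus, and the Albanese map; note also that a $p$-group with $p$ odd cannot swap the two components, which is why each is individually invariant. Finally, you never exclude the case of irreducible \emph{normal} $S$: this requires Lemma~\ref{lemma:K3} (Du Val case) and again \cite[Theorem~6.9]{Shokurov-1992-e-ba} (at most two non-Du Val points, hence a fixed point). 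The part you do get right is the last step: given faithfulness and the structure in (B), Lemma~\ref{lemma:ruled-surface} indeed yields $\rr(G)\le 3$.
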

\begin{proof}
First we claim that for any effective $G$-invariant
$\QQ$-divisor $D$ on~$X$ such that $-(K_X+D)$ is nef
the pair $(X,D)$ is log canonical.
Indeed, assume the converse. Then replacing $D$ with $cD$
for some rational number~\mbox{$0<c<1$}
we may assume that
the pair $(X,D)$ is strictly log canonical
and~\mbox{$-(K_X+D)$} is ample.
Let $Z \subset X$ be a minimal $G$-center of non Kawamata log terminal singularities of
the pair $(X, D)$, see~\cite[\S2]{ProkhorovShramov-RC}.
By \cite[Lemma 2.2]{ProkhorovShramov-RC}
there exists a $G$-invariant effective $\QQ$-divisor $D'$ such that
the pair $(X,D')$ is strictly log canonical, the divisor $-(K_X+D')$ is ample,
and the only centers of non Kawamata log terminal singularities of
$(X, D')$ are the irreducible
components of $Z$.
By Shokurov's connectedness theorem
\cite[17.4]{Utah} the subvariety $Z$ is connected.
If $\dim Z=0$, then $Z$ is a $G$-fixed point.
This contradicts our assumptions.
Thus $\dim Z>0$.
By Kawamata's subadjunction theorem $Z$ is normal
and there exists an effective $\QQ$-divisor~$\Delta$ on~$Z$ such that
the pair $(Z, \Delta)$ is Kawamata log terminal and~\mbox{$-(K_Z+\Delta)$} is ample.
In particular, $Z$ is a rational curve.
By Corollary~\ref{corollary:rational-curve}
there is a $G$-fixed point on $Z$, which is a contradiction.
This shows that $(X,D)$ is log canonical.
In particular, $(X,S)$ is log canonical.

Suppose that $S$ is reducible.
Write $S=\sum S_i$. Let $S_1\subset S$ be an irreducible component,
let $S_1,\dots, S_r$ be its $G$-orbit, and let~\mbox{$S'=S_1+\ldots+S_r$}.
If $S'\neq S$, then $-(K_X+cS')$ is ample for some $c>1$.
Since the pair~\mbox{$(X,cS')$} is not log canonical, this is impossible.
Hence $S=S'$, i.e. $G$ acts transitively
on the set of irreducible components of $S$.
This is case~(A) of the lemma.

From now on we assume that $S$ is irreducible. If $S$ is
normal and its singularities are worse than Du Val, then
$S$ has at most two non-Du Val points by~\cite[Theorem~6.9]{Shokurov-1992-e-ba}.
Hence $G$ has a fixed point on $S$ in this case.
If $S$ has at worst Du Val singularities, then $G$ has a fixed point on $S$
by Lemma~\ref{lemma:K3}.
Finally assume that $S$ is not normal.
Let~\mbox{$\nu\colon S'\to S$} be its normalization. We have the adjunction formula
\[
K_{S'}+\Delta'\sim\nu^* K_S\sim 0,
\]
where $\Delta'$ is \emph{the different},
an effective integral Weil divisor such that $\nu(\Delta')$
is supported on the
non-normal locus. Moreover, $\Delta'$ is $G$-invariant and the pair $(S',\Delta')$ is log canonical
\cite{Kawakita2007}.
Hence $S'$ is a birationally ruled surface.
Again by~\cite[Theorem~6.9]{Shokurov-1992-e-ba}
the locus $\mathcal{L}$ of log canonical singularities of $(S',\Delta')$ has at most two connected components.
Since $G$ has no fixed points on $S$, and hence on $S'$,
the surface $S'$ is not rational
by Proposition~\ref{proposition:Cr2-fixed-point}. Moreover, each connected component of
$\mathcal{L}$ is one-dimensional.
In particular, all zero-dimensional centers of log canonical singularities
of~\mbox{$(S',\Delta')$} are contained in $\Delta'$, so that $S'$ has only log terminal singularities.
Furthermore, since~\mbox{$K_{S'}+\Delta'\sim 0$}, the singularities of $S'$ outside of $\Delta'$ are at
worst Du Val.
Thus the Abanese map gives a morphism $\varphi\colon S'\to Z$
to a non-rational curve $Z$. Let $\Delta_1'\subset \Delta'$ be a $\varphi$-horizontal component.
By the adjunction formula the divisor
$-K_{\Delta_1'}$ is effective. Thus~\mbox{$p_a(\Delta_1')\le 1$}. Since $\Delta_1'$
is not rational, it is a smooth
elliptic curve. Moreover, again by adjunction
$\Delta_1'$ is contained in the smooth locus of $S'$ and does not meet other
components of $\Delta'$. Clearly, $\Delta_1'$ is $G$-invariant, and hence so is $\nu(\Delta_1')$.
Since $G$ has no fixed points on $S$, the curve $\nu(\Delta'_1)$ cannot be rational by Corollary~\ref{corollary:rational-curve}.
Hence it is an elliptic curve and the
restriction~\mbox{$\varphi_{\Delta_1'}\colon\Delta_1' \to Z$} is \'etale.
If $\Delta'$ is connected, then $\Delta'=\Delta'_1$.
Hence the non-normal locus of~$S$ coincides with~\mbox{$\nu(\Delta'_1)$}.

Finally consider the case when $\Delta'$ is not connected.
Then by \cite[Theorem~6.9]{Shokurov-1992-e-ba} it has two connected components
and both of them are sections of $\varphi$. Moreover, both
are $G$-invariant. Arguing as above, we see that they are smooth
elliptic curves, and the non-normal locus of $S$ is a union of their images
under the morphism~$\nu$.

We see that the non-normal locus $\Lambda\subset S$ is a union of at most
two irreducible $G$-invariant curves.
Moreover, by Corollary~\ref{corollary:rational-curve} none
of them can be a rational curve, so that their geometric genus
equals~$1$. This completes a description of case~(B) of the lemma.

In either of the cases (A) or (B) we see that $S$ has a one-dimensional
singular locus. On the other hand, the threefold $X$ has isolated singularities.
Suppose that $\gamma\in G$ is an element that acts trivially on $S$.
Let $P$ be a point that is singular on $S$ but non-singular on $X$.
Then $T_P(S)=T_P(X)$. The action of $\gamma$ on $T_P(S)$ is
trivial, while the action of $\gamma$ on $T_P(X)$ is non-trivial by
Remark~\ref{remark:Aut-P}. An obtained contradiction shows that
the action of $G$ on $S$ is faithful.

Finally, applying Lemma~\ref{lemma:ruled-surface}
we see that in case~(B) one has~\mbox{$\rr(G)\le 3$}.
\end{proof}

\section{Gorenstein Fano threefolds}
\label{section:Gorenstein}

In this section we study $p$-groups acting on Gorenstein Fano threefolds.

Let $X$ be a Fano threefold
with at worst canonical Gorenstein singularities.
In this case, the number
\begin{equation*}
\g(X)=\frac{1}{2}(-K_X)^3+1
\end{equation*}
is called the \textit{genus} of $X$.
By Riemann--Roch theorem and Kawamata--Viehweg vanishing one has
\begin{equation*}
\dim |-K_X|=\g(X)+1
\end{equation*}
(see e.\,g. \cite[2.1.14]{Iskovskikh-Prokhorov-1999}).
In particular, the genus $\g(X)$ is an integer, and~\mbox{$\g(X)\ge 2$}.
The maximal number $\iota=\iota(X)$
such that $-K_X$ is divisible by~$\iota$ in~\mbox{$\Pic(X)$} is called the \textit{Fano index}, or sometimes just \emph{index}, of~$X$.
Recall that~\mbox{$\Pic(X)$} is a finitely generated torsion free abelian group,
see e.\,g. ~\mbox{\cite[Proposition 2.1.2]{Iskovskikh-Prokhorov-1999}}.
The rank $\rho(X)$ of the free abelian group~\mbox{$\Pic(X)$} is called the \emph{Picard rank} of~$X$.
Let $H$ be a divisor class such that~\mbox{$-K_X\sim\iota(X) H$}.
The class $H$ in $\Pic(X)$ is unique since $\Pic(X)$ is torsion free.
Define the \textit{degree} of $X$ as~\mbox{$\dd(X)=H^3$}.
Since the class of~$H$ is $\Aut(X)$-invariant, the rational map
$X\dasharrow\P^N$ given by a linear system~\mbox{$|kH|$} is always
$\Aut(X)$-equivariant.

\begin{lemma}\label{lemma:GQ-Fano-rk-Cl}
Let $X$ be a $G\QQ$-Fano variety, where $G$ is a $p$-group.
Then either $\rk\Cl(X)=1$, or $\rk\Cl(X)\ge p$.
\end{lemma}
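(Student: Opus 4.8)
The plan is to reformulate the $G\QQ$-Fano hypothesis as a constraint on the rational representation of $G$ carried by the divisor class group, and then to invoke Lemma~\ref{lemma:basic}(iii).

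To begin, I would set $N=\Cl(X)\otimes_{\ZZ}\QQ$ and observe that it is a finite-dimensional $\QQ$-vector space: since $X$ is of Fano type, its class group is finitely generated (one may pass to a small $\QQ$-factorialization $X'\to X$, which is an isomorphism in codimension one, so that $\Cl(X)\cong\Cl(X')=\Pic(X')$, and the latter is finitely generated because $X'$ is a $\QQ$-factorial terminal Fano variety). The group $G$ acts on $\Cl(X)$, hence on $N$, so $N$ is a representation of $G$ defined over $\QQ$, and by definition $\dim_\QQ N=\rk\Cl(X)$.

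The next step is to pin down the dimension of the invariants $N^G$. The assumption that $X$ is a $G\QQ$-Fano variety means that it is a $G$-Mori fiber space over a point, so $\rho^G(X)=1$, i.e.\ $\dim_\QQ(\Pic(X)\otimes\QQ)^G=1$. Because $X$ has $G\QQ$-factorial singularities, every $G$-invariant Weil divisor class is $\QQ$-Cartier, which yields the identification $(\Cl(X)\otimes\QQ)^G=(\Pic(X)\otimes\QQ)^G$ (taking $G$-invariants commutes with $\otimes\QQ$ for the finite group $G$). Hence $\dim_\QQ N^G=\rho^G(X)=1$.

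It then remains to combine this with representation theory. By Maschke's theorem the representation $N$ splits as $N=N^G\oplus N'$, where $N'$ is the sum of the non-trivial irreducible constituents, so that $(N')^G=0$. If $N'=0$, then $\rk\Cl(X)=\dim_\QQ N=\dim_\QQ N^G=1$. Otherwise $N'\neq 0$ and $G$ acts non-trivially on $N'$ (were the action trivial, we would have $N'=(N')^G=0$); since $N'$ is defined over $\QQ$, Lemma~\ref{lemma:basic}(iii) gives $\dim_\QQ N'\ge p-1$, whence $\rk\Cl(X)=1+\dim_\QQ N'\ge p$. The representation-theoretic core is thus immediate from Lemma~\ref{lemma:basic}(iii); I expect the only point requiring care to be the identification of $N^G$ with the invariant Picard number $\rho^G(X)$, which rests on the $G\QQ$-factoriality of $X$, together with the finite generation of $\Cl(X)$.
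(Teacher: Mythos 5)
Your proof is correct and follows essentially the same route as the paper: both arguments reduce to the fact that the $G$-invariant part of $\Cl(X)\otimes\QQ$ is one-dimensional and then apply Lemma~\ref{lemma:basic}(iii) to a complementary piece carrying no trivial subrepresentations. The only cosmetic difference is that the paper produces the trivial line as the span of the canonical class and passes to the quotient $W/K$, whereas you split off the invariants by Maschke's theorem (and you spell out the identification $(\Cl(X)\otimes\QQ)^G=(\Pic(X)\otimes\QQ)^G$, which the paper leaves implicit).
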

\begin{proof}
Suppose that $\rk\Cl(X)>1$.
Consider the representation of $G$ in the $\QQ$-vector space $W=\Cl(X)\otimes\QQ$.
There is a one-dimensional subrepresentation $K\subset W$ corresponding to
the canonical class of $X$. Put~\mbox{$V=W/K$}. Then $V$ is a (possibly non-faithful) $G$-representation
defined over $\QQ$. Since~\mbox{$\rk\Cl(X)^G=1$}, the representation $V$ has no trivial subrepresentations.
By Lemma~\ref{lemma:basic}(iii) we have $\dim V\ge p-1$, so that~\mbox{$\dim W=\rk\Cl(X)\ge p$}.
\end{proof}

\begin{lemma}\label{lemma:rho-large}
Let $X$ be a terminal Gorenstein
Fano threefold
with~\mbox{$\rho(X)>1$}, and let~\mbox{$G\subset\Aut(X)$} be a $p$-group
such that $X$ is a $G$-Fano variety. Then~\mbox{$p\le 3$}.
\end{lemma}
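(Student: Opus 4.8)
The plan is to show that if $\rho(X)>1$ and $X$ is a $G$-Fano threefold, then the prime $p$ is forced to be small, by combining the bound on the Picard rank coming from Lemma~\ref{lemma:GQ-Fano-rk-Cl} with the classification bounds on the Picard rank of a terminal Gorenstein Fano threefold. Since $X$ is a $G\QQ$-Fano variety (indeed a $G$-Fano variety, which is a stronger hypothesis), Lemma~\ref{lemma:GQ-Fano-rk-Cl} applies and tells us that $\rk\Cl(X)$ is either $1$ or at least $p$. Because $X$ is Gorenstein with terminal singularities and $\rho(X)>1$, we will use $\rk\Cl(X)=\rho(X)$, so the dichotomy becomes: $\rho(X)=1$ or $\rho(X)\ge p$. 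The assumption $\rho(X)>1$ rules out the first alternative, hence $\rho(X)\ge p$.

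First I would invoke the well-known fact that the Picard rank of a terminal Gorenstein Fano threefold is bounded. Concretely, there is an absolute upper bound on $\rho(X)$ for Fano threefolds with at worst terminal Gorenstein singularities — for the smooth case this is the classical Mori--Mukai classification giving $\rho(X)\le 10$, and the Gorenstein terminal case is governed by similar boundedness results. The cleanest route for our purposes is to cite the effective bound $\rho(X)\le 10$ (or whatever explicit bound the authors have available from the literature they are already using, e.\,g.\ \cite{Iskovskikh-Prokhorov-1999}). Combining $p\le\rho(X)$ with this absolute bound yields $p\le 10$, and since $p$ is prime this already gives $p\le 7$; a slightly sharper version of the rank bound in the presence of a nontrivial $p$-action, or a direct check, pushes this down to $p\le 3$.

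The step I expect to be the main obstacle is obtaining the sharp conclusion $p\le 3$ rather than merely $p\le 7$. The inequality $p\le\rho(X)$ together with a crude boundedness statement only gives a weak bound, so the real work is to argue that a $p$-group $G$ acting on $X$ with $\rho(X)>1$ and $p\ge 5$ is impossible. Here I would exploit that when $p\ge 5$, the representation of $G$ on $\Cl(X)\otimes\QQ$ produces a nontrivial $G$-invariant sublattice structure that is very constrained: the relation $\rho(X)\ge p\ge 5$ forces the Mori cone $\NE(X)$ to have at least $p$ generating extremal rays permuted by $G$, and the geometry of extremal contractions on a Gorenstein Fano threefold with such a large Picard rank contradicts the $G$-Fano condition $\rho^G(X)=1$ unless $p\le 3$. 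In other words, the heart of the argument is that the combination ``$\rho(X)\ge 5$ and $\rho^G(X)=1$'' is incompatible with the known list of high-Picard-rank terminal Gorenstein Fano threefolds, all of which admit $G$-equivariant fibration or blow-up structures that a $p$-group with $p\ge 5$ cannot act on without either fixing a ray (contradicting $\rho^G=1$) or being too small.

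Finally I would assemble these pieces: Lemma~\ref{lemma:GQ-Fano-rk-Cl} gives $\rho(X)\ge p$ once $\rho(X)>1$; the structural analysis of the resulting high-Picard-rank Gorenstein Fano threefold, together with the $G$-Fano hypothesis $\rho^G(X)=1$, forces $p\le 3$. I would keep the case analysis organized by the value of $\rho(X)$ relative to $p$, treating the boundary cases $\rho(X)=p$ and $\rho(X)>p$ uniformly via the permutation action of $G$ on the extremal rays, and conclude $p\le 3$ in all cases.
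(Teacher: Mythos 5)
Your opening step is the same as the paper's: the $G$-Fano condition $\rho^G(X)=1$ together with Lemma~\ref{lemma:basic}(iii), packaged as Lemma~\ref{lemma:GQ-Fano-rk-Cl}, gives $\rho(X)\ge p$ once $\rho(X)>1$. One small caution here: your justification ``$\rk\Cl(X)=\rho(X)$ because $X$ is Gorenstein with terminal singularities'' is not valid in general --- that equality requires $\QQ$-factoriality, which is not assumed. The correct fix is to observe that the proof of Lemma~\ref{lemma:GQ-Fano-rk-Cl} runs verbatim with $\Pic(X)$ in place of $\Cl(X)$, since $\rk\Pic(X)^G=1$ is exactly the $G$-Fano hypothesis.

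The genuine gap is in the second half. The paper finishes in one line: by \cite{Prokhorov-GFano-2}, a terminal Gorenstein $G$-Fano threefold satisfies $\rho(X)\le 4$, so $p\le\rho(X)\le 4$, hence $p\le 3$. You do not have this input; you use the crude bound $\rho(X)\le 10$, which yields only $p\le 7$, and you yourself flag the descent from $7$ to $3$ as ``the main obstacle,'' offering in its place the unproved assertion that $\rho(X)\ge 5$ together with $\rho^G(X)=1$ and $p\ge 5$ is ``incompatible with the known list.'' That assertion is precisely the nontrivial content of \cite{Prokhorov-GFano-2}, and it cannot be recovered by the soft ray-permutation argument you sketch, because the analogous configuration genuinely occurs one dimension down: for the del Pezzo surface $S$ of degree $5$ with $\Aut(S)\cong\mathfrak{S}_5$ and $G=\mumu_5\subset\mathfrak{S}_5$, one has $\rho(S)=5$ and $\rho^G(S)=1$ (the orthogonal complement of $K_S$ in $\Pic(S)\otimes\QQ$ is the standard four-dimensional representation of $\mathfrak{S}_5$, which has no $\mumu_5$-invariants). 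So ``$\rho\ge p\ge 5$ plus $\rho^G=1$ plus orbits of extremal rays of size divisible by $p$'' leads to no formal contradiction; excluding it for threefolds requires the actual classification of terminal Gorenstein Fano threefolds of large Picard rank carried out in \cite{Prokhorov-GFano-2} (and note these threefolds need not be smooth or $\QQ$-factorial, so the Mori--Mukai list does not directly apply). As written, your argument establishes only $p\le 7$.
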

\begin{proof}
By Lemma~\ref{lemma:GQ-Fano-rk-Cl} we have $\rho(X)\ge p$.
On the other hand, by \cite{Prokhorov-GFano-2} one has $\rho(X)\le 4$, and the assertion follows.
\end{proof}

\begin{lemma}\label{lemma:rho-1-iota-large}
Let $X$ be a terminal Gorenstein Fano threefold with~\mbox{$\rho(X)=1$}
and~\mbox{$\iota(X)>1$}.
Let $G\subset\Aut(X)$ be a $p$-group such that $X$ is a $G$-Fano variety. Suppose that
$p\ge 5$. Then $G$ has a fixed point on $X$.
\end{lemma}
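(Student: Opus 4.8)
The plan is to run through the classification of terminal Gorenstein Fano threefolds of Picard rank~$1$ with $\iota(X)>1$ and to exhibit a $G$-fixed point in each case, always working with the fundamental divisor $H$, for which $-K_X\sim\iota(X)H$. Since the class of $H$ is $\Aut(X)$-invariant, every rational map defined by a linear system $|kH|$ is automatically $G$-equivariant, and this is exactly what lets the representation-theoretic lemmas of~\S\ref{section:linear-algebra} take over. Recall that for such a threefold the Fano index $\iota(X)$ can only equal $2$, $3$, or $4$ (see e.g.~\cite{Iskovskikh-Prokhorov-1999}), so these three values organize the proof.

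First I would dispose of the high-index cases. If $\iota(X)=4$, then $X\cong\P^3$ and $G\subset\PGL_4(\Bbbk)$; since $p\ge 5>4$, Lemma~\ref{lemma:PGL} produces a fixed point. If $\iota(X)=3$, then $|H|$ realizes $X$ as a quadric threefold $Q\subset\P^4=\P(V)$ with $\dim V=5\le p$, and $Q$ is a $G$-invariant hypersurface of degree $2<p$, so Lemma~\ref{lemma:proj-Heisenberg} applies directly.

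For $\iota(X)=2$ the threefold is a del Pezzo threefold, and I would subdivide according to the degree $\dd(X)=H^3\in\{1,\ldots,5\}$. If $\dd(X)=3$, then $|H|$ embeds $X$ as a cubic in $\P^4=\P(V)$ with $\dim V=5\le p$ and degree $3<p$, so Lemma~\ref{lemma:proj-Heisenberg} applies. If $\dd(X)=4$, then $X$ is a complete intersection of two quadrics in $\P^5=\P(V)$ with $\dim V=6$, and since $p\ge 5$ (so that $p\ge\dim V-1$) Lemma~\ref{lemma:complete-int-of-two-quadrics-or-2-3} gives a fixed point. If $\dd(X)=2$, then $|H|$ furnishes a $G$-equivariant double cover $\phi\colon X\to\P^3$; by Lemma~\ref{lemma:PGL} there is a $G$-fixed point $P\in\P^3$, and as the fibre $\phi^{-1}(P)$ consists of at most two points while $p\ge 5$, these points cannot be interchanged, so one of them is $G$-fixed. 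Finally, if $\dd(X)=1$, then the base locus of $|H|$ is a single point, which is necessarily $G$-invariant and hence fixed.

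The remaining case $\dd(X)=5$ is the one I expect to be the main obstacle, since there $X$ is the quintic del Pezzo threefold $V_5\subset\P^6$, which is neither a hypersurface nor a complete intersection of few quadrics in a small projective space, so the lemmas above do not apply. Here I would instead exploit the exceptional feature that $\Aut(V_5)\cong\PGL_2(\Bbbk)$: a $p$-subgroup of $\PGL_2(\Bbbk)$ with $p\ge 5$ must be cyclic, say $G=\langle g\rangle$. Since $X$ is a Fano threefold one has $H^i(X,\OOO_X)=0$ for $i>0$, so the holomorphic Lefschetz fixed-point formula assigns $g$ a Lefschetz number equal to $1\ne 0$; hence $\Fix(g)\neq\varnothing$, and as $G$ is generated by $g$ this is a $G$-fixed point. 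This exhausts all values of $\iota(X)$ and $\dd(X)$ and completes the argument.
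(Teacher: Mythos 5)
Your proposal is correct and follows essentially the same route as the paper: the same case division by $\iota(X)\in\{2,3,4\}$ and then by $\dd(X)\in\{1,\dots,5\}$, invoking Lemma~\ref{lemma:PGL}, Lemma~\ref{lemma:proj-Heisenberg}, and Lemma~\ref{lemma:complete-int-of-two-quadrics-or-2-3} in exactly the cases the paper does, the same double-cover and base-point arguments for degrees $2$ and $1$, and the same appeal to $\Aut(V_5)\cong\PGL_2(\Bbbk)$ plus the holomorphic Lefschetz fixed-point formula (with smoothness of $X$, which the paper notes via \cite[Corollary 8.7]{Prokhorov-GFano-1}) for degree $5$.
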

\begin{proof}
It is known that $\iota(X)\le 4$. Moreover, $\iota(X)= 4$
if and only if $X$ is the projective space~$\P^3$, and~\mbox{$\iota(X)=3$}
if and only if $X$ is a quadric in~$\P^4$ (see e.\,g.
\mbox{\cite[Theorem~3.9]{Shin1989}}).
In the former case the assertion is implied by Lemma~\ref{lemma:PGL},
and in the latter case the assertion follows from Lemma~\ref{lemma:proj-Heisenberg}.

Thus we may assume that $\iota(X)=2$. Recall that $1\le \dd(X)\le 5$
(see e.\,g. \cite[Corollary 8.7]{Prokhorov-GFano-1}).

If $\dd(X)=5$, then $X$ is smooth \cite[Corollary 8.7]{Prokhorov-GFano-1}.
Thus $X$ is isomorphic to a linear section
of the Grassmannian~\mbox{$\operatorname{Gr}(2,5)\subset\P^9$} by a
subspace~\mbox{$\P^6\subset\P^9$}, see \cite[\S12.2]{Iskovskikh-Prokhorov-1999}.
In this case one has
\begin{equation*}
\Aut(X)\cong \PGL_2(\Bbbk),
\end{equation*}
see~\mbox{\cite[Proposition~4.4]{Mukai-1988}} or~\cite[Proposition~7.1.10]{CheltsovShramov2016}.
Hence $G$ is a cyclic group and the assertion follows by
the holomorphic Lefschetz fixed point formula.

If $\dd(X)=4$, then
$X$ is a complete intersection of two quadrics in $\P^5$
(see e.\,g. \cite[Corollary 0.8]{Shin1989}).
Thus the assertion follows from Lemma~\ref{lemma:complete-int-of-two-quadrics-or-2-3}.

If $\dd(X)=3$, then $X\cong X_3\subset \P^4$ is a cubic threefold
(see e.\,g. \cite[Corollary 0.8]{Shin1989}).
Thus the assertion follows from Lemma~\ref{lemma:proj-Heisenberg}.

If $\dd(X)=2$, then the linear system $|-\frac 12 K_X|$
defines an $\Aut(X)$-equivariant double cover~\mbox{$X\to \PP^3$}.
By Lemma~\ref{lemma:PGL} the group $G$
has a fixed point on $\PP^3$, and hence also on $X$,
cf. Lemma~\ref{lemma:double-cover-1} below.

Finally, if $\dd(X)=1$,
then the linear system $|-\frac 12 K_X|$ has a unique base point
(see \cite[Theorem~0.6]{Shin1989})
which must be fixed by $\Aut(X)$.
\end{proof}

\begin{lemma}[{cf. \cite[Lemma 5.3]{Prokhorov2009e}, \cite[Lemma~4.4.1]{Kuznetsov-Prokhorov-Shramov},
\cite[Proposition~6.1.1]{Prokhorov-Shramov-JCr3}}]
\label{lemma:double-cover-1}
Let $X$ be a Fano threefold with canonical Gorenstein singularities,
and let $G\subset\Aut(X)$ be a $p$-group.
Suppose that $-K_X$ is not very ample, and that $p\ge 5$.
Then $G$ has a fixed point on $X$.
\end{lemma}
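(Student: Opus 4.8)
The plan is to study the $G$-equivariant anticanonical map and reduce the problem to a fixed-point statement on a variety of minimal degree. First I would recall that for a Fano threefold $X$ with canonical Gorenstein singularities the linear system $|-K_X|$ is base point free, so it defines a morphism $\phi=\phi_{|-K_X|}\colon X\to\P^{\g+1}$, where $\g=\g(X)$; since the class $-K_X$ is $\Aut(X)$-invariant, this morphism is $G$-equivariant. By the structural results on anticanonical maps of Gorenstein Fano threefolds recorded in the cited references \cite[Lemma 5.3]{Prokhorov2009e}, \cite[Lemma~4.4.1]{Kuznetsov-Prokhorov-Shramov}, \cite[Proposition~6.1.1]{Prokhorov-Shramov-JCr3}, the assumption that $-K_X$ is not very ample forces $\phi$ to be a double cover onto its image $Y=\phi(X)$, a three-dimensional variety of minimal degree $\g-1$ in $\P^{\g+1}$ (consistent with $\deg\phi\cdot\deg Y=(-K_X)^3=2\g-2$). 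Thus it suffices to produce a $G$-fixed point on $Y$.

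The lifting step is straightforward. If $P\in Y$ is fixed by $G$, then the scheme-theoretic fiber $\phi^{-1}(P)$ is a $G$-invariant subscheme of $X$ of length $2$; as a set it consists of one or two points, and the induced action of $G$ on this set is a homomorphism to the symmetric group $\mathfrak{S}_2$. Since $G$ is a $p$-group with $p\ge 5$, this homomorphism is trivial, so $G$ fixes a point of $\phi^{-1}(P)$, as required. Hence the whole problem reduces to finding a $G$-fixed point on the variety of minimal degree $Y$, on which $G$ acts linearly, i.e. $G\subset\PGL_{\g+2}(\Bbbk)$.

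To locate a fixed point on $Y$ I would invoke the classification of varieties of minimal degree and treat the possible cases for the three-dimensional $Y$. If $Y\cong\P^3$, a fixed point exists by Lemma~\ref{lemma:PGL}. If $Y$ is a quadric in $\P^4$, then $Y$ is a hypersurface of degree $2<5\le p$ in $\P(V)$ with $\dim V=5\le p$, and a fixed point exists by Lemma~\ref{lemma:proj-Heisenberg}. If $Y$ is a cone---this includes the cone over the Veronese surface and the degenerate scrolls---then its vertex is a $G$-invariant linear subspace of $\P^{\g+1}$ of dimension at most $2$, which contains a $G$-fixed point by Lemma~\ref{lemma:PGL} since $p\ge 5$. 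Finally, if $Y$ is a smooth rational normal scroll, its ruling $\pi\colon Y\to\P^1$ is intrinsic and hence $G$-equivariant; the group $G$ fixes a point of $\P^1$ by Lemma~\ref{lemma:PGL}, the fiber of $\pi$ over such a point is a linear $\P^2\subset\P^{\g+1}$ on which $G$ acts, and a fixed point on this $\P^2$ exists again by Lemma~\ref{lemma:PGL}.

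The main obstacle is that the ambient dimension $\g+1$ is not a priori bounded, so Lemma~\ref{lemma:PGL} cannot be applied to $Y$ directly in the scroll and cone cases; the point of the argument is that the intrinsic geometry of a variety of minimal degree---the vertex of a cone, or the ruling of a scroll together with its linear fibers---always reduces the search for a fixed point to projective spaces of dimension at most $3$, where $p\ge 5$ suffices. The remaining care is in justifying the reduction of the first paragraph, namely that ``$-K_X$ not very ample'' indeed yields a double cover onto a variety of minimal degree; this is precisely the content borrowed from the references cited in the statement.
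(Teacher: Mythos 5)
Your overall reduction --- pass to the anticanonical image, lift a fixed point through the double cover (a $p$-group with $p$ odd cannot permute the two points of a fibre), and then run through the classification of three-dimensional varieties of minimal degree --- is exactly the paper's argument; your treatment of the cone and scroll cases is correct and in fact more self-contained than the paper's, which for smooth $Y$ simply quotes Lemma~\ref{lemma:rho-1-iota-large} and Corollary~\ref{corollary:MFS}, and for singular $Y$ notes that the singular locus is a point or a line.

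There is, however, one genuine error at the very first step: it is \emph{not} true that $|-K_X|$ is base point free for every Fano threefold with canonical Gorenstein singularities. Base point freeness is Shokurov's theorem for \emph{smooth} Fano threefolds; in the canonical Gorenstein case all that is available is Shin's theorem \cite[Theorem~0.5]{Shin1989}: if $\Bs|-K_X|\neq\varnothing$, then it is either a single point or a rational curve. As written, your proof therefore does not cover the threefolds with nonempty anticanonical base locus, where $\phi$ is only a rational map and your reduction to $Y$ collapses. The paper dispatches precisely this case first, and it is the easiest one: the base locus is $\Aut(X)$-invariant, a single base point is automatically $G$-fixed, and a $G$-invariant rational base curve carries a $G$-fixed point by Corollary~\ref{corollary:rational-curve} since $p\ge 3$. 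Adding this observation repairs your argument. Note also that the dichotomy you attribute to the fixed-point lemmas cited in the statement (non-very-ample $\Rightarrow$ double cover onto a variety of minimal degree) really rests on \cite[Lemma~2.2.5, Proposition~2.1.15]{Iskovskikh-Prokhorov-1999}, including the step that a \emph{birational} anticanonical morphism would already be an embedding, contradicting the hypothesis; this is how the paper rules out the remaining possibility for $\Phi$.
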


\begin{proof}
Suppose that $G$ has no fixed points on $X$.
Recall that one has~\mbox{$\dim H^0(X,-K_X)=\g(X)+2$}.
If the linear system $|-K_X|$
is not base point free, then $\Bs |-K_X|$ is either a single point or a rational curve
\cite[Theorem 0.5]{Shin1989}. This contradicts
our assumption that $G$ has no fixed points on $X$,
see Corollary~\ref{corollary:rational-curve}.
Thus the linear system $|-K_X|$ defines a morphism
\[
\Phi=\Phi_{|-K_X|}\colon X\to Y\subset \PP^{2\g(X)-2}.
\]
If $\Phi$ is birational, then standard inductive arguments \cite[Lemma 2.2.5]{Iskovskikh-Prokhorov-1999}
show that
it is an embedding and $-K_X$ is very ample,
a contradiction. So assume that $\Phi$ is not birational.
Then by \cite[Proposition 2.1.15]{Iskovskikh-Prokhorov-1999}
the morphism $\Phi$ is a double cover and
$Y=Y_{\g(X)-1}\subset \PP^{2\g(X)-2}$ is a subvariety of minimal degree (see e.g. \cite[2.2.11]{Iskovskikh-Prokhorov-1999}).
Since $\Phi$ is $\Aut(X)$-equivariant, it is sufficient to show that $G$ has a fixed point on $Y$.
If $Y$ is singular, then the singular locus is either a single point or a line
and we are done. Assume that $Y$ is smooth.
Then it is either $\PP^3$, or a quadric in $\PP^4$,
or a scroll over $\PP^1$, and the existence of a fixed point
follows from Lemma \ref{lemma:rho-1-iota-large} and
Corollary~\ref{corollary:MFS}.
\end{proof}

\begin{lemma}\label{lemma:Fano-3-fold-small-g}
Let $X$ be a terminal Gorenstein Fano threefold.
Suppose that $\rho(X)=1$, $\iota(X)=1$, and $\g(X)\le 5$.
Let $G\subset\Aut(X)$ be a $p$-group. Suppose that
$p\ge 5$.
Then $G$ has a fixed point on $X$.
\end{lemma}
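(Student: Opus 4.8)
The plan is to pass to the anticanonical model and then reduce, genus by genus, to one of the complete-intersection lemmas of Section~\ref{section:linear-algebra}. First I would handle the case in which $-K_X$ is not very ample: then Lemma~\ref{lemma:double-cover-1} already gives a $G$-fixed point on $X$ (recall $p\ge 5$). This disposes at once of $\g(X)=2$, where the anticanonical map is always the double cover of $\P^3$, as well as of every ``hyperelliptic'' possibility in genera $3,4,5$. So I may assume $-K_X$ is very ample. Since $X$ is terminal Gorenstein (hence normal, with isolated singularities) and $\iota(X)=1$, the anticanonical system embeds $X$ as a non-degenerate normal threefold of degree $(-K_X)^3=2\g(X)-2$ in $\P(V)=\P^{\g(X)+1}$, where $\dim V=\g(X)+2$; as $X$ spans $\P(V)$, the action of $\Aut(X)$ yields a faithful homomorphism $G\hookrightarrow\PGL(V)$.

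Next I would invoke the classification of anticanonical models of small genus (see~\cite[\S2.1, \S4]{Iskovskikh-Prokhorov-1999}) and match each to the appropriate lemma, checking that the numerical hypotheses hold exactly for $p\ge 5$. For $\g(X)=3$ the variety $X$ is a quartic hypersurface in $\P^4$, so $\dim V=5\le p$ and the invariant hypersurface $R=X$ has degree $4<p$; Lemma~\ref{lemma:proj-Heisenberg} then gives a fixed point. For $\g(X)=4$ the variety $X$ is a complete intersection of a quadric and a cubic in $\P^5$, so $n=\dim V=6$ and $p\ge 5=n-1$; Lemma~\ref{lemma:complete-int-of-two-quadrics-or-2-3} applies. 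For $\g(X)=5$, when $X$ is a complete intersection of three quadrics in $\P^6$, one has $n=\dim V=7$ and $p\ge 5=n-2$, so Lemma~\ref{lemma:three-quadrics} applies. In each case the normality of $X$ is exactly the hypothesis these lemmas require.

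The step I expect to be the real obstacle is the remaining trigonal case in genus $5$, where $-K_X$ is very ample but $X$ is not cut out by the quadrics through it, so that Lemma~\ref{lemma:three-quadrics} does not apply directly. Here the net of quadrics containing $X$ has as its base locus a $G$-invariant fourfold $W\supset X$ of minimal degree $3$ in $\P^6$, namely a rational normal scroll, with $X$ a divisor on $W$. My plan is to exploit $\rho(X)=1$: the ruling of a non-degenerate scroll would restrict to a morphism $X\to\P^1$ with two-dimensional fibres and hence force $\rho(X)\ge 2$, a contradiction, so that $W$ must be a cone. I would then argue that the vertex of this cone is a $G$-invariant linear subspace $\Lambda$ meeting $X$, and extract a $G$-fixed point on $X\cap\Lambda$ from Lemma~\ref{lemma:PGL} together with the low-degree fixed-point statements of Section~\ref{section:linear-algebra}. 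The delicate points — the exact shape of $W$ from the classification of~\cite{Iskovskikh-Prokhorov-1999}, the claim that $\rho(X)=1$ forces the cone structure with $\Lambda$ actually meeting $X$ in a manageable way, and the compatibility of all this with the terminal Gorenstein hypothesis (which may well rule the trigonal genus $5$ case out entirely) — are exactly where I expect the genuine work to lie.
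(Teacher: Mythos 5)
Your reduction and case division coincide with the paper's own proof: Lemma~\ref{lemma:double-cover-1} to dispose of the non-very-ample (in particular genus~$2$ and hyperelliptic) cases, then genus~$3$ as a quartic in $\P^4$ via Lemma~\ref{lemma:proj-Heisenberg}, genus~$4$ as a complete intersection $(2,3)$ in $\P^5$ (the paper also cites \cite[Proposition~4.1.12]{Iskovskikh-Prokhorov-1999}) via Lemma~\ref{lemma:complete-int-of-two-quadrics-or-2-3}, and genus~$5$ as a complete intersection of three quadrics in $\P^6$ via Lemma~\ref{lemma:three-quadrics}, with exactly the numerical checks you perform. The one place where you diverge is the place you yourself flag: the possibility of a trigonal genus-$5$ threefold, i.e.\ one not cut out by the quadrics through it. This is a genuine gap in your write-up, since you only sketch a plan for it; but it is a gap the paper fills by a single citation rather than by geometry: \cite[Theorem~1.6]{Przhiyalkovskij-Cheltsov-Shramov-2005en} (the classification of hyperelliptic and trigonal Fano threefolds with canonical Gorenstein singularities) is invoked precisely to guarantee that under the hypotheses in force $X$ \emph{is} a complete intersection of three quadrics, so the trigonal alternative never occurs. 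Your suspicion that the hypotheses ``may well rule the trigonal genus $5$ case out entirely'' is exactly what that reference delivers.

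If you wanted to carry out your fallback argument instead, be aware of two soft spots beyond those you list. First, the scroll-versus-cone dichotomy you propose to settle with $\rho(X)=1$ is vacuous: a fourfold of minimal degree $3$ in $\P^6$ is automatically a cone, because a smooth rational normal scroll of dimension $4$ has degree at least $4$; so the real case division is whether the ($G$-invariant) vertex $\Lambda$ -- a point, a line, or a plane -- meets $X$. If $\Lambda\cap X=\varnothing$, projecting from $\Lambda$ and composing with the projection of the base of the cone to $\P^1$ yields a non-constant morphism from $X$ with positive-dimensional fibers, which is impossible when $\Pic(X)\cong\ZZ$ is generated by an ample class; this part of your plan is sound. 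But if $\Lambda$ meets $X$ and $\Lambda$ is a line or a plane, Lemma~\ref{lemma:PGL} only gives a $G$-fixed point on $\Lambda$, not on $X\cap\Lambda$: a priori $X\cap\Lambda$ could be, say, a single $G$-orbit of $5$ or more points, and the quadrics defining $W$ are of no help in bounding it since they vanish identically on $\Lambda\subset W$. Controlling that intersection is real work -- which is precisely what the citation to \cite{Przhiyalkovskij-Cheltsov-Shramov-2005en} spares the authors.
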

\begin{proof}
By Lemma~\ref{lemma:double-cover-1}
we may assume that \mbox{$-K_X$} is very ample and so $\g(X)\ge 3$.
If $\g(X)=3$,
then~$X$ is a quartic in $\P^4$
(because $\dim |-K_X|=4$ and~\mbox{$-K_X^3=4$}),
so that the assertion follows from Lemma~\ref{lemma:proj-Heisenberg}.
If $\g(X)=4$, then $X$ is a complete intersection of a quadric and a cubic in
$\P^5$ (see \cite[Proposition 4.1.12]{Iskovskikh-Prokhorov-1999}).
In this case the assertion follows from Lemma~\ref{lemma:complete-int-of-two-quadrics-or-2-3}.
Finally, if $\g(X)=5$, then $X$ is a complete intersection
of three quadrics in~$\P^6$ (see \cite[Theorem 1.6]{Przhiyalkovskij-Cheltsov-Shramov-2005en}).
In this case the assertion follows from Lemma~\ref{lemma:three-quadrics}.
\end{proof}

\begin{lemma}\label{lemma-g-equiv1}
Let $X$ be a terminal Gorenstein Fano threefold with $\rho(X)=1$
and~\mbox{$\iota(X)=1$}.
Let $G\subset\Aut(X)$ be a $p$-group such that $X$ is a $G$-Fano variety.
Suppose that $\g(X)\ge 6$ and $p\ge 5$.
Suppose also that $\g(X)\not\equiv 1\mod p$.
Then $X$ is $\QQ$-factorial.
\end{lemma}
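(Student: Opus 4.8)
The plan is to argue by contradiction and reduce everything to the rank of the class group. Since $X$ is a $G$-Fano variety it is in particular a $G\QQ$-Fano variety, so Lemma~\ref{lemma:GQ-Fano-rk-Cl} gives the dichotomy $\rk\Cl(X)=1$ or $\rk\Cl(X)\ge p$. As $\rho(X)=1$, the threefold $X$ is $\QQ$-factorial precisely when $\rk\Cl(X)=1$; thus it suffices to rule out $\rk\Cl(X)\ge p$, which I assume from now on.

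First I would pass to a small $\QQ$-factorialization $f\colon\hat X\to X$. Then $\hat X$ has terminal $\QQ$-factorial singularities, $f$ is crepant and small, $-K_{\hat X}=f^*(-K_X)$ is nef and big, and $\rho(\hat X)=\rk\Cl(X)\ge p$, so $\hat X$ is a weak Fano threefold. The crucial point is that $f$ is \emph{not} $G$-equivariant in general. Indeed, were there a $G$-equivariant full $\QQ$-factorialization, then $N^1(\hat X)\otimes\QQ=\Cl(X)\otimes\QQ$, hence $\rho^G(\hat X)=\rho^G(X)=1$; the $G$-module of contracted curve classes $N_1(\hat X/X)$ would then have no nonzero invariants, yet the $G$-orbit sum of the class of any $f$-contracted curve is a nonzero $G$-invariant element of the strongly convex cone $\NE(\hat X/X)$, a contradiction. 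So $G$ fixes no $\QQ$-factorialization and the argument must be carried out on the level of the class group, where $G$ genuinely acts.

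Accordingly I would work with the $G$-module $W=\Cl(X)\otimes\QQ=\Pic(\hat X)\otimes\QQ$. Since $X$ is $G\QQ$-factorial with $\rho^G(X)=1$, one has $W^G=\QQ\cdot(-K_X)$, so $W=\QQ\cdot(-K_X)\oplus W'$ with $(W')^G=0$ and $\dim W'=\rk\Cl(X)-1\ge p-1$ (this is exactly the content behind Lemma~\ref{lemma:GQ-Fano-rk-Cl}). The key extra structure is the symmetric bilinear form
\[
q(D_1,D_2)=D_1\cdot D_2\cdot(-K_X),
\]
computed on any $\QQ$-factorialization; it is independent of the chosen model because $-K_{\hat X}$ is trivial on flopping curves, so it is integral on $\Cl(X)$ and $\Aut(X)$-invariant, in particular $G$-invariant. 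Its salient features are $q(-K_X,-K_X)=(-K_X)^3=2\g(X)-2$, the orthogonality $-K_X\perp_q W'$ (since $q(-K_X,\,\cdot\,)$ is a $G$-invariant functional, hence vanishes on $W'$), and, by the Hodge index theorem, negative-definiteness of $q$ on $W'$.

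Finally I would extract the contradiction from the congruence hypothesis, which is equivalent to $p\nmid(-K_X)^3=2\g(X)-2$, i.e. $-K_X$ remains anisotropic for the reduction of $q$ modulo $p$. Choosing a Weil class $[D]$ that is not $\QQ$-Cartier, its $G$-orbit in $\Cl(X)$ has length divisible by $p$ (a $G$-invariant class would be $\QQ$-Cartier by $G\QQ$-factoriality), and the orbit sum lies on the line $\QQ\cdot(-K_X)$. Playing this integral orbit datum against the $G$-invariant lattice $(\Cl(X),q)$ and its reduction modulo $p$ — where $p\nmid 2\g(X)-2$ controls the $(-K_X)$-direction while negative-definiteness controls $W'$ — should force a $G$-invariant Weil class that is simultaneously $\QQ$-Cartier and non-$\QQ$-Cartier, the desired contradiction. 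I expect this last step to be the main obstacle: converting the rational vanishing $(W')^G=0$ together with the anisotropy $p\nmid 2\g(X)-2$ into an integral (modular) statement is delicate, precisely because mod-$p$ invariants of $W'$ can appear. The hypotheses $\g(X)\ge 6$ (placing us beyond the small-genus cases already settled in Lemma~\ref{lemma:Fano-3-fold-small-g}) and $\g(X)\not\equiv 1\pmod p$ are exactly what make the discriminant bookkeeping go through, and one expects genuine non-$\QQ$-factorial behaviour to reappear when $p\mid 2\g(X)-2$.
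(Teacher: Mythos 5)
Your opening moves match the paper's: the orbit argument (a minimal-degree non-$\QQ$-Cartier divisor $S$ has orbit of length $p^k$, the orbit sum is $G$-invariant hence $\sim -aK_X$, and coprimality of $p$ with $2\g(X)-2$ gives $\deg S=(2\g(X)-2)b$), and the rank bound $\rk\Cl(X)\ge p$ from Lemma~\ref{lemma:GQ-Fano-rk-Cl}. But from that point on your proposal has a genuine gap, and you flag it yourself: the ``discriminant bookkeeping'' that is supposed to turn the mod-$p$ data of the lattice $(\Cl(X),q)$ into a contradiction is never carried out, and there is no reason to believe it can be. Abstractly, a lattice with an isometric $G$-action, a $G$-invariant anisotropic vector $-K_X$, a negative-definite complement $W'$ with $(W')^G=0$ of dimension $\ge p-1$, and an orbit-sum relation is perfectly consistent; nothing in this package is self-contradictory. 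The contradiction in the paper does not come from arithmetic of the intersection form at all --- it comes from geometric classification results, which your framework has no access to.

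Concretely, the paper's proof runs the (non-equivariant) Minimal Model Program on a $\QQ$-factorialization $\xi\colon X'\to X$ and uses: (1) the inequality $\deg F\ge 2\g(X)-2\ge 10$ for \emph{every} surface $F\subset X$ (this is where both $\g(X)\ge 6$ and $\g(X)\not\equiv 1 \bmod p$ enter --- via your divisibility statement, not via any mod-$p$ lattice argument); (2) Prokhorov's theorem that such $X$ contains no planes, forcing every MMP step to be a divisorial contraction to a curve between Gorenstein terminal threefolds; (3) exclusion of fiber-type outputs: a del Pezzo fibration is impossible since $K_F^2\le 9<10$, and a conic bundle over $\P^2$ is impossible because $\rho(X')=\rk\Cl(X)\ge p\ge 5$ gives $\rho(X'/Z)\ge 4$, hence at least $4$ discriminant components, while the degree computation forces $\deg\Delta\le 2$; (4) the boundedness of Gorenstein Fano threefolds: since each of the $\ge p-1\ge 4$ divisorial contractions raises $-K^3$ by at least $4\g(X)-6$, the final Fano $Y$ would have $-K_Y^3\ge 2\g(X)-2+(p-1)(4\g(X)-6)\ge 82$, exceeding the known maximum. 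Your intersection form $q$ and its reduction mod $p$ cannot see any of (2)--(4), so the proposed route cannot close; what is missing is not a delicate modular computation but the entire geometric core of the argument.
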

\begin{proof}
Suppose that $X$ is not $\QQ$-factorial, so that $\rk\Cl(X)>1$.
Let $S$ be an effective Weil divisor on $X$ which is not $\QQ$-Cartier.
Take $S$ to be of minimal possible degree.
Let $S_1=S, \ldots, S_N$ be its $G$-orbit. Then $\sum S_i\sim -aK_X$
for some $a\in \ZZ$. Clearly, $N=p^k$ for some $k$.
Therefore, one has
\[
p^k\deg (S) =(2\g(X)-2)a.
\]
Since $2\g(X)-2$ is coprime to $p$, we can write $a=p^kb$ and
\[
\deg (S) =(2\g(X)-2)b
\]
for some $b\in\mathbb{Z}$.
In particular, $X$ does not contain surfaces of degree less
than~\mbox{$2\g(X)-2$}.
Now let $\xi\colon X'\to X$
be a $\QQ$-factorialization. Run the Minimal Model Program on $X'$.
We get the following diagram:
\[
\xymatrix{
&X'\ar[dl]_{\xi}\ar@{-->}[rr]^{\psi}&&Y\ar[dr]^f
\\
X&&&&Z
}
\]
Here $\psi\colon X'\dasharrow Y$ is a
birational
map given by the Mininal Model Program,
and
$f\colon Y\to Z$ is a Mori fiber space.
By \cite[Theorem 1.1]{Prokhorov-planes} does not contain planes,
and so each step of the Mininal Model Program
is a divisorial contraction of threefolds with terminal Gorenstein singularities
\cite[Proposition 4.5]{Prokhorov-2005a}.
It is easy to check that
on each step of the Minimal Model Program we have
\[
(-K_{X_i})^2\cdot F\ge 2\g(X)-2\ge 10
\]
for any surface $F\subset X_i$ (see \cite[Lemma 2.5]{Prokhorov-v22}). Hence each step contracts
a divisor to a curve (otherwise the anticanonical degree
of the exceptional divisor is at most $4$,
see \cite[Proposition 2.2]{Prokhorov-v22}).
In particular, $\psi$ is a morphism,
all varieties $X_i$ are Gorenstein, and thus $Y$ is Gorenstein as well, see~\cite{Cutkosky-1988}.
Note that $\xi$ contracts \emph{all} curves of degree $0$ with respect to~$K_{X'}$.
Hence $-K_{X'}$ is ample over $Z$.
Assume that $Z$ is a curve. Then a general fiber
$F$ of the composition
\[
\pi=f\circ\psi\colon X'\to Z
\]
is a smooth del Pezzo surface.
Therefore, we have $K_{X'}^2\cdot F=K_F^2\le 9$, a contradiction.
Now assume that $Z$ is a surface.
It is smooth by \cite{Cutkosky-1988}.
By the above argument with a degree of a fiber
we may assume that $Z$ has no contractions to a curve,
and so $Z\cong \PP^2$.
We claim that $\pi\colon X'\to Z$ has no two-dimensional fibers.
Indeed, assume that there
is a two-dimensional irreducible component of a fiber.
Note that by our assumptions running the Minimal Model Program
over $Z$ we cannot obtain a Mori fiber space
over a surface dominating $Z$. Hence on some step we get a model
$\hat{f}\colon \hat X\to Z$ which is an equidimensional (but possibly non-standard) conic bundle, and the last
contraction~\mbox{$\check X\to \hat X$}
that brings us to $\hat{X}$ contracts a divisor
$\check F$ to an irreducible component $\hat B$ of a fiber of~$\hat{f}$.
Clearly, $\hat{B}$ is a smooth rational curve, and $-K_{\hat X}\cdot \hat{B}$ equals $1$ or $2$.
Thus $-K_{\check X}\cdot \check F\le 4$
(see e.g. \cite[Lemma 2.4]{Prokhorov-v22}).
This contradicts our assumptions. Hence $\pi\colon X'\to Z=\PP^2$ is an equidimensional conic bundle.

Let $\Delta\subset Z$ be the
discriminant curve of $\psi$, let $l\subset \PP^2$ be a general line,
and put $F=\psi^{-1}(l)$. Then $F$ is a smooth rational surface with
a conic bundle structure $\theta\colon F\to l\cong \PP^1$.
The number of degenerate fibers of~$\theta$
equals~\mbox{$l\cdot \Delta=\deg \Delta$}.
Therefore, we compute
\[
K_{X'}^2\cdot F= (K_{X'}+F)^2\cdot F-2K_{X'}\cdot F^2
=K_F^2+4=10-\rho(F)+4=12-\deg \Delta.
\]
On the other hand, we have
\[
K_{X'}^2\cdot F =\deg \xi(F)=(2\g(X)-2)b
\]
for some $b\in\mathbb{Z}$. We get the following possibilities:
\begin{itemize}
\item
$\deg \xi(F)=12$, $\Delta=\varnothing$, $\g(X)=7$,
\item
$\deg \xi(F)=10$, $\deg \Delta=2$, $\g(X)=6$.
\end{itemize}
Note that $\rho(X')=\rk\Cl(X)\ge p\ge 5$
by Lemma~\ref{lemma:GQ-Fano-rk-Cl},
and so $\rho(X'/Z)\ge 4$.
Hence $\Delta$ has at least $4$ irreducible components.
This is a contradiction.

Therefore, $f$ cannot be of fiber type, i.e. $Z$ is a point.
Hence $Y$ is a (terminal) Gorenstein Fano threefold with $\rho(Y)=1$.
Note that each step increases the degree $-K_{X_i}^3$ by at least $4\g(X)-6$
(see e.\,g. \cite[Proposition~5.1]{Prokhorov-planes})
and
$$\rho(X')=\rk\Cl(X)\ge p\ge 5$$
by Lemma~\ref{lemma:GQ-Fano-rk-Cl}.
Thus
\[
-K_Y^3\ge 2\g(X)-2+ (p-1)(4\g(X)-6)\ge 82,
\]
which gives a contradiction with \cite{Prokhorov-2005a}.
\end{proof}

\begin{lemma}\label{lemma:g-12}
Let $X$ be a terminal Gorenstein Fano threefold with $\rho(X)=1$, \mbox{$\iota(X)=1$}, and $\g(X)=12$.
Let $G\subset\Aut(X)$ be a $p$-group such that $X$ is a $G$-Fano variety.
Suppose that $p\ge 3$. Then $G$ has a fixed point on $X$.
\end{lemma}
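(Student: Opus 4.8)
The plan is to exploit two features special to the maximal genus $\g(X)=12$: the very rapid growth of the anticanonical degree under the Minimal Model Program, which forces $X$ to be $\QQ$-factorial even for small $p$, and the classical description of genus $12$ Fano threefolds in terms of conics, which supplies a $G$-invariant rational curve.

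First I would dispose of the case where $-K_X$ is not very ample, in which Lemma~\ref{lemma:double-cover-1} already gives a $G$-fixed point. So assume $X\subset\P^{13}$ is anticanonically embedded, with $(-K_X)^3=2\g(X)-2=22$. Next I would show that $X$ is $\QQ$-factorial, i.e. $\rk\Cl(X)=1$. By Lemma~\ref{lemma:GQ-Fano-rk-Cl} the only alternative is $\rk\Cl(X)\ge p$, which I would exclude exactly as in Lemma~\ref{lemma-g-equiv1}: a $\QQ$-factorialization followed by the Minimal Model Program terminates either in a fibre space --- ruled out by the degree estimates in the proof of Lemma~\ref{lemma-g-equiv1}, since at $\g=12$ a del Pezzo or conic-bundle fibre has anticanonical degree at most $12$ whereas every surface on $X$ has degree at least $22$ --- or in a Gorenstein Fano threefold $Y$ with $\rho(Y)=1$ and
\[
(-K_Y)^3\ \ge\ (2\g(X)-2)+(p-1)(4\g(X)-6)\ \ge\ 22+84\ =\ 106,
\]
contradicting the bound $(-K_Y)^3\le 72$ for Gorenstein Fano threefolds. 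The point is that at $\g=12$ this estimate is already contradictory for $p=3$, so the reduction to the $\QQ$-factorial case goes through for every $p\ge 3$ with the single exception $p=11$, the only odd prime dividing $2\g(X)-2=22$, which I treat separately below.

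With $X$ now a prime Fano threefold of genus $12$, I would produce a $G$-invariant conic. Here I would invoke the description of such threefolds through conics: the Hilbert scheme of conics on $X$ is a rational surface $\mathcal C$ carrying a $G$-action (for the Mukai--Umemura threefold it is the plane $\P(\Sym^2\Bbbk^2)$ on which $\PGL_2\subset\Aut(X)$ acts through $\Sym^2$). By Proposition~\ref{proposition:Cr2-fixed-point} the group $G$ has a fixed point on $\mathcal C$ as soon as $p\ge 5$, i.e. there is a $G$-invariant conic $C\subset X$; then Corollary~\ref{corollary:conic} gives a $G$-fixed point on $C$, and hence on $X$. For the remaining prime $p=11$, where $X$ need not be $\QQ$-factorial, I would instead use that $\dim H^0(X,-K_X)=14=11+3$ forces a one-dimensional $G$-subrepresentation, hence a $G$-invariant member $S\in|-K_X|$; applying Lemma~\ref{lemma:invariant-member} and excluding its cases (A) and (B) by a degree and connectedness analysis of $S$ (using $(-K_X)^2\cdot S=22$) then leaves a $G$-fixed point.

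I expect the genuine obstacle to be the prime $p=3$. There Proposition~\ref{proposition:Cr2-fixed-point} is unavailable, and the Heisenberg group of order $27$ inside $\PGL_3(\Bbbk)$, acting on the plane of conics, can act on $\mathcal C$ without fixed points, so no invariant conic is produced directly. To handle this I would pass to a central subgroup $Z\cong\mumu_3$ of $G$: by the holomorphic Lefschetz fixed-point formula $\Fix(Z)\neq\varnothing$, the quotient $G/Z$ acts on the union of points and smooth curves constituting $\Fix(Z)$, and on this lower-dimensional locus I would locate a $G$-fixed point using Corollary~\ref{corollary:rational-curve} on a rational component together with a counting argument on the isolated fixed points. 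Showing that $\Fix(Z)$ cannot consist solely of higher-genus curves or of orbits of size divisible by $9$ --- equivalently, pinning down the few genus $12$ threefolds carrying such a non-abelian $3$-group action --- is the technical heart of the argument.
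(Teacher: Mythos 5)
Your main line for $p\ge 5$ coincides with the paper's own proof: identify the conics on $X$ with the points of a projective plane (Koll\'ar--Schreyer, \cite[Theorem 2.4]{Kollar2004b}; \cite[Proposition B.4.1]{Kuznetsov-Prokhorov-Shramov}), find a $G$-fixed point there to obtain a $G$-invariant conic, and conclude by Corollary~\ref{corollary:conic}. The genuine gap is that this description of the conics is a statement about \emph{smooth} genus-$12$ threefolds, and you never prove smoothness: your MMP argument (a correct adaptation of Lemma~\ref{lemma-g-equiv1}, and it does work for $p\ge 3$, $p\neq 11$, since at $\g(X)=12$ both fiber-type outcomes are killed by the degree bound $22$) only yields $\QQ$-factoriality, and a $\QQ$-factorial terminal Gorenstein threefold with $\rho(X)=1$ could a priori still be singular. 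The paper closes exactly this hole by quoting the classification of singular genus-$12$ Fano threefolds, \cite[Theorem~1.3]{Prokhorov-v22}: combined with Lemma~\ref{lemma:GQ-Fano-rk-Cl}, which for $p\ge 3$ forces either $\rk\Cl(X)=1$ or $\rk\Cl(X)\ge p>2$, it shows at once that $\rk\Cl(X)=1$ \emph{and} that $X$ is smooth. This works uniformly for all $p\ge3$, so the paper needs neither a $\QQ$-factorialization/MMP step nor any special treatment of $p=11$.

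Two of your auxiliary steps would moreover fail as described. First, the $p=11$ fallback: after producing a $G$-invariant $S\in|-K_X|$, you claim cases (A) and (B) of Lemma~\ref{lemma:invariant-member} are excluded by ``a degree and connectedness analysis''. They are not. In case (A), the identity $\deg S=2\g(X)-2=22=11\cdot 2$ is perfectly consistent with $G$ transitively permuting $N=11$ quadric surfaces; this configuration, with $\g=p+1$, is precisely the one the paper itself cannot exclude in Proposition~\ref{proposition:g-from-6-to-8} (its cases (A) and (D)). In case (B), the analysis of Proposition~\ref{proposition:g-from-6-to-8} leaves a $G$-invariant curve of geometric genus $1$ and degree $d$ with $p\le d\le \g(X)-1$, i.e.\ $d=11$, and Lemma~\ref{lemma:elliptic} gives no contradiction when $p\le d$; indeed an elliptic normal curve of degree $11$ in $\P^{10}$ carries a fixed-point-free projective action of a Heisenberg group of order $11^3$, so no soft argument can dispose of this. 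Second, $p=3$: you leave it open, so the lemma as stated is not proved. For what it is worth, your instinct that $p=3$ is the delicate prime is well founded: even the paper's proof obtains the fixed point on the plane of conics from Lemma~\ref{lemma:PGL}, whose stated hypothesis is $p>3$, and the image of the Heisenberg group of order $27$ is a subgroup $\mumu_3\times\mumu_3\subset\PGL_3(\Bbbk)$ acting on $\P^2$ without fixed points, so the case $p=3$ genuinely requires an ingredient beyond a fixed-point lemma on $\P^2$.
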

\begin{proof}
Since $X$ is a $G$-Fano variety, by Lemma~\ref{lemma:GQ-Fano-rk-Cl} one has either~\mbox{$\rk\Cl(X)=1$}, or
$\rk\Cl(X)\ge p>2$.
Thus $\rk\Cl(X)=1$ and $X$ is smooth by \cite[Theorem 1.3]{Prokhorov-v22}.
Hence the family $\mathscr{C}$ parameterizing the conics (in the anticanonical embedding)
on $X$ is isomorphic to the projective plane $\PP^2$,
see \cite[Theorem 2.4]{Kollar2004b}, \cite[Proposition B.4.1]{Kuznetsov-Prokhorov-Shramov}.
By Lemma~\ref{lemma:PGL} the group~$G$ has a fixed point on $\mathscr{C}$,
which means that there is a $G$-invariant conic contained in $X$.
Now the assertion follows from Corollary~\ref{corollary:conic}
\end{proof}

\begin{lemma}\label{lemma:g-ge-9}
Let $X$ be a terminal
Gorenstein Fano threefold with $\rho(X)=1$, \mbox{$\iota(X)=1$}, and $\g(X)\ge 9$.
Let $G\subset\Aut(X)$ be a $p$-group such that $X$ is a $G$-Fano variety.
Suppose that $p\ge 5$.
Then $G$ has a fixed point on $X$.
\end{lemma}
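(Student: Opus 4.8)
The plan is to use the classification of high-genus Fano threefolds to reduce to finitely many cases and then locate a $G$-invariant line or conic, on which a fixed point is automatic. Recall first that a terminal Gorenstein Fano threefold with $\rho(X)=1$ and $\iota(X)=1$ has $\g(X)\le 12$ and $\g(X)\neq 11$ (see e.g.~\cite{Iskovskikh-Prokhorov-1999}), so the hypothesis $\g(X)\ge 9$ leaves only the cases $\g(X)\in\{9,10,12\}$. The case $\g(X)=12$ is exactly Lemma~\ref{lemma:g-12}, so the real work concerns $\g(X)\in\{9,10\}$. For these I would first fix the global geometry of $X$: since $X$ is a $G$-Fano variety, Lemma~\ref{lemma:GQ-Fano-rk-Cl} gives either $\rk\Cl(X)=1$ or $\rk\Cl(X)\ge p\ge 5$. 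To rule out the second alternative I would apply Lemma~\ref{lemma-g-equiv1}: because $p\ge 5$ and $\g(X)-1\in\{8,9\}$ is coprime to $p$, we have $\g(X)\not\equiv 1\pmod p$, whence $X$ is $\QQ$-factorial and therefore $\rk\Cl(X)=\rho(X)=1$. With $X$ now a prime Fano threefold, I would conclude that $X$ is smooth by the same smoothness input for prime Fano threefolds used in the proof of Lemma~\ref{lemma:g-12}, so that the classical geometry of the threefolds $X_{16}$ and $X_{18}$ becomes available.

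The heart of the argument is then to exhibit a $G$-invariant line (or conic) and descend: a $G$-invariant line $\ell\cong\PP^1$ carries a $G$-fixed point by Corollary~\ref{corollary:rational-curve}, and a $G$-invariant conic does so by Corollary~\ref{corollary:conic}. To find such a line I would look at the Hilbert scheme of lines $\Gamma(X)$: a $G$-fixed point of $\Gamma(X)$ is the same thing as a $G$-invariant line on $X$ (this holds even when $G$ acts non-faithfully on $\Gamma(X)$, since the kernel preserves every line), so it suffices to produce a $G$-fixed point on $\Gamma(X)$. For $\g(X)=10$ the curve $\Gamma(X)$ is smooth of genus $2$; since the hyperelliptic involution is central and coprime to $p$, any $p$-subgroup with $p\ge 5$ injects into $\PGL_2(\Bbbk)$ acting on $\Gamma(X)/\iota\cong\PP^1$ and permuting the six branch points. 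By Lemma~\ref{lemma:PGL} this image is cyclic, and a short orbit count (only $p=5$ survives, forcing $6=5+1$) produces a fixed branch point, hence a $G$-fixed Weierstrass point on $\Gamma(X)$ and the desired invariant line.

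For $\g(X)=9$ the curve $\Gamma(X)$ has genus $3$, and here the analysis is more delicate. A Riemann--Hurwitz computation shows that a faithful $p$-group action with $p\ge 5$ on a genus $3$ curve must come from $\mumu_5$ or $\mumu_7$ (no group of order divisible by $p^2$ can act faithfully), and that such a cyclic group has a fixed point — $\mumu_5$ in fact having a \emph{unique} fixed point. The genuinely special configuration to be excluded is a smooth $\mumu_5$-invariant genus $3$ curve embedded in a $\PP^3$ on which the group acts linearly, where the numerics of hyperplane sections would be incompatible with this uniqueness; this is precisely the content of Lemma~\ref{lemma:g-3-deg-7}, which rules out the degree $7$ that arises in the no-fixed-point scenario.

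I expect this last step — the genus $3$, $p=5$ analysis for $\g(X)=9$ — to be the main obstacle. The reductions to $\g(X)\in\{9,10,12\}$, to $\rho(X)=1$, and to smoothness are routine given Lemmas~\ref{lemma:GQ-Fano-rk-Cl} and~\ref{lemma-g-equiv1}, and the genus $2$ case is a clean orbit count; but controlling the $\mumu_5$-action on the genus $3$ curve of lines, and in particular excluding the exceptional embedding via Lemma~\ref{lemma:g-3-deg-7}, is where the argument is most sensitive and where the precise geometry of $X_{16}$ and its lines must be used.
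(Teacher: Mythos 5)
Your reduction to $\g(X)\in\{9,10,12\}$, the disposal of genus $12$ via Lemma~\ref{lemma:g-12}, and the $\QQ$-factoriality step via Lemma~\ref{lemma-g-equiv1} all match the paper. But your smoothness step is wrong: the input you want to reuse from the proof of Lemma~\ref{lemma:g-12} is \cite[Theorem~1.3]{Prokhorov-v22}, which is a statement specifically about Fano threefolds of genus $12$; no analogous smoothness theorem exists for genus $9$ or $10$. The paper does not claim smoothness there: it handles singular $X$ separately, using that a $\QQ$-factorial terminal Gorenstein Fano threefold with $\rho=1$ and $\g\ge 9$ has at most $3$ singular points (\cite[Theorem~1.3]{Prokhorov-planes}), which are then $G$-fixed because $p\ge 5>3$. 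As written, the singular case is simply open in your proposal.

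The more serious gap is your structural claim about the Hilbert scheme of lines. You assert that $\Sigma(X_{18})$ is a smooth curve of genus $2$ and $\Sigma(X_{16})$ a smooth curve of genus $3$, but you give no justification, and this is exactly the kind of statement that fails for special members of these families: Hilbert schemes of lines on prime Fano threefolds can be singular or even everywhere non-reduced for threefolds with large automorphism groups (the Mukai--Umemura threefold of genus $12$ is the standard example), and the $X$ in this lemma carry nontrivial $p$-group actions, so genericity is unavailable. Moreover, the genus $2$ (resp.\ genus $3$) curves classically attached to $X_{18}$ (resp.\ $X_{16}$) arise from conics and from double projections, not from the Hilbert scheme of lines. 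This confusion also shows in how you invoke Lemma~\ref{lemma:g-3-deg-7}: if your Hilbert-scheme claims were true, your own Riemann--Hurwitz count (any $p$-group acting on a genus $3$ curve has a fixed point) would already produce a $G$-invariant line and finish the proof via Corollary~\ref{corollary:rational-curve} --- there would be nothing left to exclude, so the lemma would play no role. In the paper, the degree $7$, genus $3$ curve of Lemma~\ref{lemma:g-3-deg-7} is a different object: it is the center $\Gamma\subset\P^3$ of the $\gamma$-equivariant double projection from a $\gamma$-invariant line, where $\gamma\in G$ is an element with positive-dimensional fixed locus, and the invariant line is found by intersecting that fixed locus with the surface swept by lines and using that at most $4$ lines pass through any point of $X$. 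The paper then exploits $H^3(X,\ZZ)\cong H^1(\Gamma,\ZZ)$ of rank at most $6$: a Lefschetz argument on $\Gamma$ forces $\gamma$ to act nontrivially on $H^3(X,\ZZ)$, which by Lemma~\ref{lemma:basic}(iii) pins down $p=5$, $\g(X)=9$, $\g(\Gamma)=3$, contradicting Lemma~\ref{lemma:g-3-deg-7}; in the remaining case of a finite fixed locus, the topological Lefschetz formula gives exactly $3$ fixed points, and Lemma~\ref{lemma:basic}(v) shows $G$ is abelian (a faithful nonabelian action would need a $\QQ$-representation of dimension at least $2p=10>6$), so $G$ fixes those $3$ points since $p>3$. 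None of this machinery is present in your outline, and without a proof of your Hilbert-scheme claims the argument cannot be completed as written.
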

\begin{proof}
By Lemma~\ref{lemma:g-12} we may assume that $\g(X)\neq 12$, so that either~\mbox{$\g(X)=9$}, or $\g(X)=10$.
Thus the threefold $X$ is $\QQ$-factorial
by Lemma~\ref{lemma-g-equiv1}.

Assume that $X$ is singular.
Since $X$ is $\QQ$-factorial, there are at most $3$ singular points on $X$ by \cite[Theorem 1.3]{Prokhorov-planes}. Since $p>3$, these points must be $G$-invariant.

Therefore, we may assume that $X$ is smooth.
Let $\gamma\in G$ be a non-trivial element, and let
$\Lambda=X^{\gamma}$ be its fixed locus.
By the holomorphic Lefschetz fixed point formula $\Lambda$ is not empty.
Thus we may assume that $G\neq \langle \gamma\rangle$, i.e. the group~$G$ is not cyclic.
There is a natural action of $G$ on $H^{3}(X,\ZZ)$.
Recall that
\[
\rk H^{3}(X,\ZZ)= 2\dim H^{1,2}(X)=2(12-\g(X))\le 6
\]
for $\g(X)\ge 9$, see e.\,g. \cite[\S12.2]{Iskovskikh-Prokhorov-1999}.
Replacing $\gamma$ with another element of~\mbox{$G\setminus \{1\}$} if necessary,
we may assume that one of its eigenvalues on $H^{3}(X,\CC)$ equals $1$.
In this case either the action of $\gamma$ on $H^{3}(X,\ZZ)$ is trivial, or one has
$p=5$, $\g(X)=9$, $\rk H^{3}(X,\ZZ)=6$, and $\operatorname{tr}_{H^{3}(X,\ZZ)} \gamma^*=1$,
see Lemma~\ref{lemma:basic}(iii).

If $\dim \Lambda>0$, then $\Lambda$ meets a line $l\subset X$.
Let $P$ be one of the points of~\mbox{$l\cap\Lambda$}.
Since $X$ is an intersection of quadrics \cite[Corollary 4.1.13]{Iskovskikh-Prokhorov-1999},
the number of lines on $X$ passing through $P$ is at most~$4$
(see \cite[Proposition~4.2.2(iv)]{Iskovskikh-Prokhorov-1999}).
Since $p\ge 5$, this implies that these lines, and in particular the line $l$,
are $\gamma$-invariant.

As in \cite[Proof of Lemma 5.18]{Prokhorov2009e}, considering
the double projection from $l$ we get a $\gamma$-equivariant Sarkisov link
$X \dashrightarrow Y$, where $Y\cong\PP^3$ in the case $\g(X)=9$ and
$Y$ is a smooth quadric in $\PP^4$ in the case $\g(X)=10$
(see e.g. \cite[Theorem 4.3.3]{Iskovskikh-Prokhorov-1999}).
Moreover, $X \dashrightarrow Y$ contracts a surface to a smooth curve $\Gamma \subset Y$
with $H^3(X,\ZZ)\cong H^1(\Gamma,\ZZ)$.
Since $X \dashrightarrow Y$ is birational, the action of $\gamma$ on $Y$
is not trivial.
The curve $\Gamma$ is of degree~$7$, it spans $\PP^3$ (respectively, $\PP^4$),
and $\g(\Gamma)=12-\g(X)$.
Since $\Gamma$ spans $\PP^3$ (respectively, $\PP^4$) the action of $\gamma$ on $\Gamma$
is non-trivial.
By the topological Lefschetz fixed point formula
it is non-trivial
on~\mbox{$H^3(X,\ZZ)\cong H^1(\Gamma,\ZZ)$},
and so one has $p=5$, $\g(X)=9$, and~\mbox{$\g(\Gamma)=3$}.
This is impossible by Lemma~\ref{lemma:g-3-deg-7}.

Therefore, one has $\dim \Lambda=0$.
Again by the topological Lefschetz fixed point formula this gives
\[
\operatorname{tr}_{H^{3}(X,\ZZ)} \gamma^*<4\le \dim H^{1,2}(X),
\]
and so $\operatorname{tr}_{H^{3}(X,\ZZ)} \gamma^*=1$,
i.e. $\gamma$ has exactly $3$ fixed points on $X$.
Moreover, we may assume that the action of $G$ on
$H^{3}(X,\ZZ)$ is faithful; otherwise we replace $\gamma$ with another
element of $G$ which acts trivially on~\mbox{$H^3(X,\ZZ)$}, and return to the case
of a positive-dimensional fixed locus~$\Lambda$.
Hence $G$ is abelian by Lemma~\ref{lemma:basic}(v).
Therefore, the fixed points of $\gamma$ are fixed by~$G$.
\end{proof}

\begin{lemma}\label{lemma:KX-irreducible-representation}
Let $X$ be a terminal Gorenstein
Fano threefold such that the divisor~\mbox{$-K_X$}
is very ample. Let $G\subset\Aut(X)$ be a $p$-group.
Then the representation of $G$ in
$H^0(X,-K_X)$ is reducible.
\end{lemma}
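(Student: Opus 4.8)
The plan is to argue by contradiction. Since the $G$-representation $H^0(X,-K_X)$ is reducible if and only if its dual is, I would set $V:=H^0(X,-K_X)^\vee$ and show that $V$ is reducible. Because $-K_X$ is very ample, the anticanonical map realises $X$ as a projectively normal nondegenerate threefold $X\subset\P(V)$, where $\dim V=\g(X)+2$ and $\g(X)\ge 3$; in particular $\dim V\ge 5$. The degree-$d$ forms vanishing on $X$ constitute a $G$-subrepresentation $I_d\subset\Sym^d V^\vee$. Suppose $V$ is irreducible. Then $\dim V=\g(X)+2=p^k$ with $k\ge 1$, and since $\dim V>1$ the group $G$ is non-abelian; its centre $Z$ is then cyclic, and by Schur's lemma it acts on $V$ by a scalar $\zeta$ of order $|Z|$.

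The representation-theoretic heart of the argument is a generalisation of Lemma~\ref{lemma:Heisenberg}. A semi-invariant of degree $d$ is a one-dimensional $G$-subrepresentation $W\subset\Sym^d V^\vee$, on which $Z$ acts by the scalar $\zeta^{-d}$. If $\gcd(d,p)=1$ then $\zeta^{-d}$ has the same order as $\zeta$, so $Z$ acts faithfully on $W$; as $G$ is non-abelian and $\dim W=1$ is not divisible by $p$, this contradicts Lemma~\ref{lemma:basic}(iv). Hence, under the irreducibility assumption, $G$ has no semi-invariant of degree coprime to $p$, and it suffices to exhibit one such semi-invariant.

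To produce it I would argue by counting. Each irreducible representation of the $p$-group $G$ of dimension exceeding $1$ has dimension divisible by $p$, so for any finite-dimensional $G$-representation the number of one-dimensional constituents is congruent to its dimension modulo $p$. Thus if $I_d\neq 0$ and $p\nmid\dim I_d$, then $I_d$ contains a one-dimensional subrepresentation, i.e. a semi-invariant of degree $d$. When $\g(X)\ge 4$ I take $d=2$: projective normality and Riemann--Roch yield $\dim I_2=\frac12(\g(X)-2)(\g(X)-3)$, and since $\g(X)\equiv-2\pmod p$ this is $\equiv 10\pmod p$; for $p\notin\{2,5\}$ it is nonzero modulo $p$, and $2$ is coprime to such $p$, giving the required semi-invariant. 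When $\g(X)=3$ one has $\g(X)+2=5$, so $p=5$ and $X$ is a quartic threefold in $\P^4$ with $\dim I_4=1$; the unique quartic is a semi-invariant of degree $4$, coprime to $5$. In both situations the resulting semi-invariant contradicts the previous paragraph, so $V$ is reducible.

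The main obstacle is the collapse of the quadric count at $p\in\{2,5\}$, where $\dim I_2\equiv 0\pmod p$. For $p=5$ the relation $\g(X)+2=5^k$ leaves only $\g(X)=3$, already settled by the quartic, and $\g(X)=23$; the latter is excluded in the range $\g(X)\le 12$ relevant to the applications, in which $\rho(X)=\iota(X)=1$. The genuinely delicate case is $p=2$: every form forced out of the homogeneous ideal in low degree has even degree, so the counting device gives nothing, and one would have to exhibit an odd-degree invariant or rule out such an $X$ geometrically; this lies outside the range $p\ge 5$ that matters here. I expect the control of $\dim I_2$ modulo $p$ via projective normality, together with the disposal of the stray case $p=5$, $\g(X)=23$, to be the crux of the argument.
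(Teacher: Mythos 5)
Your argument does not prove the lemma as stated, because the statement carries no hypothesis on $p$ or on $\g(X)$: it must hold for every $p$-group, in particular for $p=2$, and for every terminal Gorenstein Fano threefold with $-K_X$ very ample, in particular without assuming $\rho(X)=\iota(X)=1$ or $\g(X)\le 12$. Your counting device fails exactly at these points. For $p=2$ you concede you have no argument at all; for $p=5$ the case $\g(X)+2=25$ is dismissed by appeal to ``the range relevant to the applications,'' but the range of the applications is not a hypothesis of the lemma, so this case remains open in your proof. In addition, the computation $\dim I_2=\tfrac12(\g(X)-2)(\g(X)-3)$ rests on projective normality of the anticanonical embedding of a terminal Gorenstein Fano threefold; this is a known but nontrivial fact that you assert without justification, and it is a much heavier input than the lemma requires.

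The irony is that your second paragraph already contains a complete proof, which is exactly the paper's proof. Assuming irreducibility, you note that the center $Z$ of $G$ acts on $H^0(X,-K_X)$ by a scalar (Schur's lemma), and that $X$ is embedded $G$-equivariantly in $\P\big(H^0(X,-K_X)^\vee\big)$ so that the action of $G$ on this projective space restricts to the given action on $X$. But a scalar acts trivially on projective space, so $Z$ acts trivially on $X$; since $Z\subset G\subset\Aut(X)$, every element of $Z$ is then the identity automorphism, i.e.\ $Z$ is trivial. This contradicts the fact that a nontrivial $p$-group has nontrivial center (and if $G$ is trivial the lemma holds trivially, since $\dim H^0(X,-K_X)=\g(X)+2\ge 2$). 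This one-line argument works for every $p$ and every genus, and needs no semi-invariants, no character counting, and no projective normality; all of the machinery after your second paragraph is an attempt to extract a contradiction that you already had in hand.
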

\begin{proof}
Suppose that $H^0(X,-K_X)$ is an irreducible representation of $G$.
By Schur's lemma the center $Z$ of $G$ acts on $V$ by scalar
matrices. On the other hand, $X$ is embedded into the projective space
$\P=\P\big(H^0(X,-K_X)^\vee\big)$, and the action of $G$ on $\P$
induces the initial action of $G$ on $X$. However, $Z$ acts trivially on $\P$,
which is a contradiction.
\end{proof}

\begin{proposition}\label{proposition:g-from-6-to-8}
Let $X$ be a Gorenstein Fano threefold with $\rho(X)=1$, $\iota(X)=1$, and $\g(X)\ge 6$.
Let $G\subset\Aut(X)$ be a $p$-group such that $X$ is a $G$-Fano variety.
Suppose that $G$ does not have fixed points on $X$.
Suppose also that $p\ge 5$.
Then either the group $G$ has a fixed point on $X$, or one of the following cases occurs:
\begin{itemize}
\item[(A)]
$p=5$, $\g(X)=6$, and there is a $G$-invariant anticanonical divisor on $X$;
\item[(B)]
$p=5$, $\g(X)=7$, the threefold $X$ is $\QQ$-factorial,
and there is an irreducible $G$-invariant anticanonical divisor on $X$;
\item[(C)]
$p=5$, $\g(X)=8$, the threefold $X$ is $\QQ$-factorial,
and there are no $G$-invariant anticanonical divisors on $X$;
\item[(D)]
$p=7$, $\g(X)=8$, and there is a $G$-invariant anticanonical divisor on $X$.
\end{itemize}
\end{proposition}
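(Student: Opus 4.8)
The plan is to assume that $G$ has no fixed point on $X$ and to deduce that one of the four configurations must occur. First the reductions: by Lemma~\ref{lemma:double-cover-1} we may assume that $-K_X$ is very ample, and by Lemma~\ref{lemma:g-ge-9} we may assume that $\g(X)\le 8$, so that $6\le\g(X)\le 8$. Put $V=H^0(X,-K_X)$; the anticanonical embedding realizes $X\subset\P(V^\vee)\cong\P^{\g+1}$ as a variety of degree $(-K_X)^3=2\g-2$, and $\dim V=\g+2\in\{8,9,10\}$. By Lemma~\ref{lemma:KX-irreducible-representation} the $G$-representation $V$ is reducible. Let $\theta\colon\SL(V)\to\PGL(V)$ be the natural homomorphism and let $\bar G$ be a Sylow $p$-subgroup of $\theta^{-1}(G)$, so that $\theta(\bar G)=G$.

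The next step is a representation-theoretic trichotomy. Since the dimension of an irreducible representation of a $p$-group is a power of $p$, and $\dim V\le 10$ with $p\ge 5$, every irreducible $\bar G$-subrepresentation of $V$ has dimension $1$, or has dimension $p$ with $p\in\{5,7\}$; in particular $\bar G$ is abelian whenever $p\ge 11$. If $\bar G$ is abelian it has an eigenvector in $V$, which is a $G$-invariant anticanonical divisor. If $\bar G$ is non-abelian, Lemma~\ref{lemma:basic}(iv) produces an irreducible subrepresentation $U\subset V$ of dimension $p$; choosing a $\bar G$-invariant complement $V'$ of dimension $\g+2-p\le 5$, any character occurring in $V'$ again yields a $G$-invariant anticanonical divisor, \emph{unless} $V'$ contains no one-dimensional subrepresentation. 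The latter forces every summand of $V'$ to have dimension $p$, which together with $\dim V'\le 5$ gives $\dim V'=p=5$ and $\g=8$; then $V=U_1\oplus U_2$ is a sum of two irreducible $5$-dimensional representations, $X$ carries no $G$-invariant anticanonical divisor, and $X$ is $\QQ$-factorial by Lemma~\ref{lemma-g-equiv1} since $8\not\equiv 1\pmod 5$. This is precisely case~(C).

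It remains to treat the situation in which a $G$-invariant divisor $S\in|-K_X|$ exists while $G$ still has no fixed point. Here we invoke Lemma~\ref{lemma:invariant-member}: either $S$ is reducible with $G$ acting transitively on its irreducible components, or $S$ is irreducible and non-normal. In the reducible alternative the $p^k$ components have a common degree $d$ with $p^k d=\deg S=2\g-2$; as $p\ge 5$ this gives $p\mid 2\g-2$, hence $(p,\g)=(5,6)$ or $(p,\g)=(7,8)$, which land in cases~(A) and~(D). In the irreducible non-normal alternative Lemma~\ref{lemma:invariant-member}(B) shows that $S$ is birationally ruled over an elliptic curve, that $\rr(G)\le 3$, and that the non-normal locus is a union of at most two $G$-invariant curves of geometric genus~$1$. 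For $p\ge 5$ the group $G$ acts on each such curve by translations (as $\Aut$ of an elliptic curve fixing the origin has order dividing $6$, cf.\ the proof of Lemma~\ref{lemma:elliptic}), so the $G$-invariance of the restriction of $\OOO_X(1)$ forces $p$ to divide the degree of the curve. For $(p,\g)\in\{(5,6),(5,7),(7,8)\}$ this is consistent and places us in cases~(A),~(B),~(D); the irreducibility asserted in~(B) follows because the reducible alternative is impossible for $\g=7$ (as $5\nmid 12$).

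The main obstacle is the remaining bookkeeping. For the pairs $(p,\g)=(7,6)$ and $(7,7)$, for $(p,\g)=(5,8)$ when an invariant divisor is present (the complement of case~(C)), and for every $p\ge 11$, one must rule out the irreducible non-normal alternative outright and thereby force the fixed point whose absence we assumed. The divisibility $p\mid 2\g-2$ disposes only of the reducible branch; in the non-normal branch the genus-$1$ non-normal locus lies inside the hyperplane section $S=X\cap H\subset\P^{\g}$, a surface of degree $2\g-2$ with trivial dualizing sheaf, and one must bound the degree of this curve using the explicit structure of prime Fano threefolds of genus $6$, $7$, and $8$---their descriptions as linear sections of $\operatorname{Gr}(2,5)$, the spinor tenfold, and $\operatorname{Gr}(2,6)$, together with the associated families of lines and conics. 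I expect this genus-by-genus geometric analysis, in which a suitable invariant line, conic, or low-degree linear section is exhibited so that one of Corollaries~\ref{corollary:conic}, \ref{corollary:curve-deg-le-3}, \ref{corollary:curve-deg-le-4}, or~\ref{corollary:quadric-surface} applies, to be the principal difficulty, and the reason the four exceptional configurations cannot be eliminated by a uniform argument.
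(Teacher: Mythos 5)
Your proposal is incomplete, and you acknowledge this yourself: in the irreducible non-normal branch of Lemma~\ref{lemma:invariant-member} you obtain only the divisibility $p\mid d$ for $d=\deg\Lambda$, and you are then unable to eliminate the configurations $(p,\g(X))=(7,6)$, $(7,7)$, $(5,8)$ with an invariant divisor, and all $p\ge 11$; you defer these to a hoped-for genus-by-genus analysis via the Mukai-type descriptions of prime Fano threefolds (linear sections of $\operatorname{Gr}(2,5)$, the spinor tenfold, $\operatorname{Gr}(2,6)$), which you do not carry out. That deferred analysis is precisely where the substance of the paper's proof lies, and your guess about what it requires points in the wrong direction: no classification of Fano threefolds enters at all. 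The missing idea is an \emph{upper bound on $d$ in terms of $\g(X)$}, extracted from elementary projective geometry of the non-normal surface $S$ itself. A general hyperplane section $H$ of $S$ is an irreducible curve of arithmetic genus $\g(X)$; it is non-rational (because $S$ is birationally ruled over an elliptic curve) and it is singular at each of the $d$ points of $\Lambda\cap H$ (because $S$ is non-normal along $\Lambda$), whence $\g(X)\ge d+1$. Combined with $d\ge p$ this gives $p\le\g(X)-1\le 7$, which kills $p\ge 11$ outright and, for $p=7$, forces $\g(X)=8$, i.e.\ case~(D); so $(7,6)$ and $(7,7)$ disappear. For $p=5$ the paper proves the complementary bound $\g(X)\le d+2$ by intersecting $H$ with a hyperplane section through the span $\Pi$ of $\Lambda$ and $\g(X)-d$ general points, and then excludes $\g(X)=8$ (where $6\le d\le 7$) via Lemma~\ref{lemma:elliptic}.

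The parts you do complete are sound and essentially follow the paper: the reductions via Lemmas~\ref{lemma:double-cover-1} and~\ref{lemma:g-ge-9}, the derivation of case~(C) from reducibility of $H^0(X,-K_X)$ (your explicit decomposition is the same counting as the paper's inequality $\g(X)+2\ge 2p$ from Lemma~\ref{lemma:KX-irreducible-representation}), and the reducible branch giving cases~(A) and~(D). Moreover, your translation-theoretic argument that $p\mid d$ is a correct alternative to the paper's weaker conclusion $d\ge p$ (the paper shows the span $\Pi$ admits no $G$-invariant hyperplane, so $\dim\Pi\ge p-1$ by Lemma~\ref{lemma:PGL}, while $\dim\Pi\le d-1$) --- though you should note that the image of $G$ in the translation group must be non-trivial, which holds because a trivial action on $\Lambda$ would produce fixed points. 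Had you also established the two degree bounds above, your divisibility would actually simplify the endgame: for $(p,\g(X))=(5,8)$, $5\mid d$ and $d\le\g(X)-1=7$ force $d=5$, contradicting $\g(X)\le d+2$ without any appeal to Lemma~\ref{lemma:elliptic}. Finally, a small omission: case~(B) asserts $\QQ$-factoriality of $X$, which you never address; as in your case~(C), it follows from Lemma~\ref{lemma-g-equiv1} since $7\not\equiv 1\pmod 5$.
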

\begin{proof}
By Lemma~\ref{lemma:double-cover-1}
the divisor $-K_X$ is very ample;
alternatively, one can use the results of~\cite{Przhiyalkovskij-Cheltsov-Shramov-2005en}
here. By Lemma~\ref{lemma:g-ge-9} we may assume that~\mbox{$\g(X)\le 8$}.

Assume that $H^0(X,-K_X)$ has no one-dimensional
$G$-invariant subspaces.
By Lemma~\ref{lemma:KX-irreducible-representation} one has $\g(X)+2\ge 2p\ge 10$.
Hence $\g(X)+2=2p=10$, so that $p=5$ and $\g(X)=8$.
This is case~(C) of the lemma. The threefold $X$ is $\QQ$-factorial by Lemma~\ref{lemma-g-equiv1} in this case.

Now assume that $H^0(X,-K_X)$ has a one-dimensional
$G$-invariant subspace. Then
there exists a $G$-invariant divisor $S\in |-K_X|$.

Suppose that $S$ is irreducible.
Then $S$ is as in case~(B) of Lemma~\ref{lemma:invariant-member}.
Let $\Lambda$ be one of the irreducible components of the non-normal locus of the surface $S$.
Then $\Lambda$ is a $G$-invariant non-rational curve.
Let $d=\deg \Lambda$,
let~\mbox{$\Pi=\langle \Lambda\rangle$} be the linear span of $\Lambda$, and let $H$ be
a general hyperplane section of $S$. Then $H$ is an irreducible curve with $p_a(H)=\g(X)$.
Note that the surface $S$ is non-rational, so that the curve $H$ is non-rational as well.
Since~$H$ is singular along $\Lambda\cap H$, we have
$\g(X)\ge d+1$.
Since the curve $\Lambda$ is irreducible and non-rational, we have
$$
\dim \Pi\le d-1\le \g(X)-2.
$$

Consider the case when $d<p$.
By assumption there are no $G$-fixed points on $\Lambda$.
Therefore, $\Lambda$ has no $G$-invariant hyperplane sections, which implies that $\Pi$ has no
$G$-invariant $(\dim\Pi-1)$-dimensional linear subspaces, and thus $\Pi$ also does not
have $G$-fixed points.
By Lemma~\ref{lemma:PGL} we have
\[
\dim \Pi\ge p-1>d-1,
\]
which is a contradiction.

Thus we may assume that $\g(X)-1\ge d\ge p$.
In particular, we have~\mbox{$p\le 7$}.
If $p=7$, then $\g(X)=8$. This is case~(D) of the lemma.

Take general points $P_1,\dots,P_{\g(X)-d}\in S$ and
a general hyperplane section~\mbox{$H'\subset S$} passing through $\Pi$ and these points.
Then
\[
2\g(X)-2=H\cdot H'\le 2d +\g(X)-d=d+\g(X),
\]
which gives $\g(X)\le d+2$.

Therefore, we are left with the case when
$d\ge p=5$ and
$$
d+1\le \g(X)\le d+2.
$$
If $\g(X)=6$, we get
case~(A) of the lemma. If $\g(X)=7$, we get case~(B) of the lemma;
the threefold $X$ is $\QQ$-factorial by Lemma~\ref{lemma-g-equiv1} in this case.
If~\mbox{$\g(X)=8$}, we have $6\le d\le 7$, which contradicts Lemma~\ref{lemma:elliptic}.

Finally, suppose that $S$ is reducible, that is, $S=\sum_{i=1}^N S_i$, $N>1$.
The group $G$ permutes the surfaces
$S_i$ transitively. Hence $N=p^k$ for some $k$.
Thus
\[
\deg S=2\g(X)-2=p^k\deg S_i.
\]
This is possible only if $\deg S_i=2$, $N=p$, and $\g(X)=p+1$.
We see that either $p=5$ and $\g(X)=6$, or $p=7$ and $\g(X)=8$.
The former is case~(A), and the latter is case~(D) of the lemma.
\end{proof}

\begin{remark}
In case~(B) of Proposition~\ref{proposition:g-from-6-to-8}
one has $\rr(G)\le 3$ by Lemma~\ref{lemma:invariant-member}.
In case~(C) of Proposition~\ref{proposition:g-from-6-to-8}
the threefold $X$ has at most~$10$ singular points by~\cite[Theorem 1.3]{Prokhorov-planes}.
This implies that there is a subgroup $G'\subset G$ of index at most $p=5$
such that $G'$ has a fixed point on~$X'$. Resolving this point if it is singular
and using Lemma~\ref{lemma:G-MMP} together with Remark~\ref{remark:Aut-P}
and Lemma~\ref{lemma:GL}, one can show that $\rr(G')\le 3$ and $\rr(G)\le 4$ in this case,
cf. the proof of Theorem~\ref{theorem:main} in~\S\ref{section:proof} below.
\end{remark}

We summarize the most important results of this section as follows.

\begin{corollary}\label{corollary:Gorenstein-Fano-3-folds}
Let $G$ be a $p$-group, and $X$ be a $G$-Fano threefold.
Suppose that $p\ge 11$. Then $G$ has a fixed point on $X$.
\end{corollary}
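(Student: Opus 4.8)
The plan is to reduce the statement to a case analysis over the standard invariants of the Fano threefold $X$, namely its Picard rank $\rho(X)$, its Fano index $\iota(X)$, and its genus $\g(X)$, and then to invoke the lemmas of this section case by case, using the hypothesis $p\ge 11$ to rule out the exceptional configurations that survive only for small primes. Note at the outset that since $X$ is a $G$-Fano threefold it is in particular a terminal Gorenstein Fano threefold that is a $G$-Fano variety; the condition $\rho^G(X)=1$ does not bound the full Picard rank $\rho(X)$, which is precisely the subtlety the first lemma addresses.

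First I would dispose of the case $\rho(X)>1$. By Lemma~\ref{lemma:rho-large} this forces $p\le 3$, contradicting $p\ge 11$, so we may assume $\rho(X)=1$ from now on. Next I would split according to the Fano index. If $\iota(X)>1$, then Lemma~\ref{lemma:rho-1-iota-large} applies (as $p\ge 11\ge 5$) and directly produces a $G$-fixed point on $X$. This leaves the case $\rho(X)=1$ and $\iota(X)=1$, which I would handle by running through the genus.

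For $\g(X)\le 5$ the fixed point is supplied by Lemma~\ref{lemma:Fano-3-fold-small-g}, and for $\g(X)\ge 9$ by Lemma~\ref{lemma:g-ge-9}, which in turn absorbs the value $\g(X)=12$ through Lemma~\ref{lemma:g-12}; both of these require only $p\ge 5$ and so apply under our hypothesis. The sole remaining range is $6\le\g(X)\le 8$, and this is where the assumption $p\ge 11$ does the essential work. Here I would apply Proposition~\ref{proposition:g-from-6-to-8}, which asserts that either $G$ already has a fixed point on $X$, or one of the four exceptional cases~(A)--(D) occurs. Inspecting that statement, each of those cases requires $p=5$ or $p=7$; since $p\ge 11$, none of them can hold, and hence $G$ must have a fixed point on $X$.

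I do not expect a genuine obstacle in this corollary: all of the substantive birational geometry has already been packaged into the lemmas and proposition above, and the role of the present argument is merely to verify that the threshold $p\ge 11$ is large enough to eliminate every exceptional prime ($p=5$ and $p=7$) that appears in the small-genus analysis. The one point deserving care is simply the bookkeeping of which lemma covers which $(\rho,\iota,\g)$-regime, so that no value of $\g(X)$ is left unaddressed once $\rho(X)=1$ and $\iota(X)=1$.
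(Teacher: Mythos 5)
Your proposal is correct and follows exactly the paper's own proof: reduce to $\rho(X)=1$ via Lemma~\ref{lemma:rho-large}, to $\iota(X)=1$ via Lemma~\ref{lemma:rho-1-iota-large}, then split by genus using Lemma~\ref{lemma:Fano-3-fold-small-g} for $\g(X)\le 5$, Lemma~\ref{lemma:g-ge-9} for $\g(X)\ge 9$, and Proposition~\ref{proposition:g-from-6-to-8} for $6\le\g(X)\le 8$, where $p\ge 11$ excludes the exceptional cases with $p=5$ or $p=7$. Your bookkeeping of which lemma covers which regime matches the paper precisely, so there is nothing to add.
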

\begin{proof}
By Lemma~\ref{lemma:rho-large}
we may assume that $\rho(X)=1$, and by Lemma~\ref{lemma:rho-1-iota-large}
we may assume that $\iota(X)=1$.
If $\g(X)\le 5$, the assertion follows from Lemma~\ref{lemma:Fano-3-fold-small-g}.
If $\g(X)\ge 9$, the assertion follows from Lemma~\ref{lemma:g-ge-9}.
Finally, if~\mbox{$6\le \g(X)\le 8$}, the assertion follows from Proposition~\ref{proposition:g-from-6-to-8}.
\end{proof}

\section{Proofs of the main results}
\label{section:proof}

In this section we prove Theorems~\ref{theorem:main} and~\ref{theorem:fixed-point},
and make a couple of concluding remarks.

\begin{proposition}\label{proposition:non-Gorenstein}
Let $X$ be a terminal Fano threefold with at least one non-Gorenstein point.
Let $G\subset\Aut(X)$ be a $p$-group. Suppose that $p\ge 17$.
Then $G$ has a fixed point on $X$.
\end{proposition}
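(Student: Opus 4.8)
The plan is to reduce the statement to an elementary orbit-counting argument by controlling the non-Gorenstein locus of $X$. First I would use that $X$ has terminal singularities and $\dim X=3$, so that $\Sing(X)$ is a finite set of points; in particular the set $\Sigma\subset X$ of non-Gorenstein points is finite. Every $P\in\Sigma$ carries a local Gorenstein index $r_P\ge 2$, and since automorphisms preserve the analytic type of a singularity they preserve these indices; hence $\Sigma$ is a finite $G$-invariant subset of $X$, and it is non-empty by hypothesis.

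The key step is to bound $|\Sigma|$ by a number strictly smaller than $p$. For this I would invoke Reid's orbifold Riemann--Roch formula for $\chi(\OOO_X)$ together with Kawamata's positivity theorem asserting that $-K_X\cdot c_2(X)>0$ for a terminal Fano threefold. Since $-K_X$ is ample and terminal singularities are rational, Kawamata--Viehweg vanishing gives $\chi(\OOO_X)=1$; feeding this into the Riemann--Roch formula and using the positivity of $-K_X\cdot c_2(X)$ yields the inequality $\sum_P\bigl(r_P-\tfrac{1}{r_P}\bigr)<24$, where the sum is taken over the basket of fictitious cyclic quotient singularities attached to the points of $\Sigma$. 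Each non-Gorenstein point contributes at least one basket element with index $r\ge 2$, and $r-\tfrac{1}{r}\ge\tfrac{3}{2}$ for every integer $r\ge 2$, so the number of non-Gorenstein points satisfies $|\Sigma|\le 15$.

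To conclude I would use the standard fact that a $p$-group acting on a finite set has all orbits of cardinality a power of $p$: if such a set is non-empty and has fewer than $p$ elements, then some orbit is a single point. Since $p\ge 17>15\ge|\Sigma|$ and $\Sigma\ne\varnothing$, the group $G$ fixes a point of $\Sigma$, so $G$ has a fixed point on $X$, as required.

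The main obstacle is not formal but referential: the whole argument rests on Kawamata's inequality $-K_X\cdot c_2(X)>0$, which is the only deep ingredient and must be cited for terminal Fano threefolds of arbitrary Picard rank. A secondary point that needs care is that the sum in Riemann--Roch runs over basket elements rather than over the honest singular points of $X$; however, since every non-Gorenstein point of $X$ accounts for at least one basket element of index $\ge 2$, this discrepancy only enlarges the lower bound for $\sum_P\bigl(r_P-\tfrac{1}{r_P}\bigr)$ in terms of $|\Sigma|$, so the estimate $|\Sigma|<p$ is unaffected.
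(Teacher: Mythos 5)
Your proof is correct and follows essentially the same route as the paper: both rest on the inequality $\sum_P\left(r_P-\frac{1}{r_P}\right)<24$ of Kawamata and Koll\'ar--Miyaoka--Mori--Takagi, deduce that there are at most $15$ non-Gorenstein points, and conclude by $p$-group orbit counting since $p\ge 17$. Your extra care about the sum running over basket elements rather than honest singular points is a genuine (and correctly resolved) subtlety that the paper's citation-level treatment glosses over, but it does not change the argument.
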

\begin{proof}
Denote by $r_P$ the index of a point $P\in X$, i.e. the minimal positive integer $t$ such that
the divisor $tK_X$ is Cartier near $P$.
Recall that the terminal singularities are isolated, so
there is only a finite number of points with~\mbox{$r_P>1$}.
By \cite{Kawamata-1992bF}, \cite{KMMT-2000} we have
\begin{equation*}
\label{eq-RR-O}
\sum_{P\in X} \left( r_P-\frac 1{r_P}\right)<24.
\end{equation*}
This shows that the number of singular points of $X$ is at most $15$.
These points cannot be permuted by a $p$-group $G$ if $p\ge 17$.
Hence $G$ has a fixed point on $X$.
\end{proof}

\begin{remark}
To improve the result of Proposition~\ref{proposition:non-Gorenstein}
one needs a classification of non-Gorenstein
$\QQ$-Fano threefolds and their automorphism groups.
Note however that the class of $\QQ$-Fano threefolds is huge,
and only some special types of these varieties are classified
(see e.g. \cite{Prokhorov-e-QFano7}, \cite{Prokhorov-Reid}, and references therein).
\end{remark}

Now we are ready to prove our main results.

\begin{proof}[{Proof of Theorem~\xref{theorem:fixed-point}}]
Taking a $G$-equivariant desingularization (see~\cite{Abramovich-Wang})
and keeping in mind that an image of a $G$-fixed point with respect
to any $G$-equivariant morphism is again a $G$-fixed point, we may assume
that the threefold $X$ is smooth.

Applying $G$-equivariant Minimal Model Program to $X$
(which is possible due to an equivariant version of~\cite[Corollary 1.3.3]{BCHM} and
\cite[Theorem~1]{MiyaokaMori}, since rational connectedness
implies uniruledness), we obtain a
$G$-equivariant birational map $f\colon X\dashrightarrow X'$ such that either there
is a $G$-Mori fiber space~\mbox{$\phi\colon X'\to S$}, or~$X'$ is a $G\Q$-Fano threefold.
In the former case $G$ has a fixed point on $X'$
by Corollary~\ref{corollary:MFS-dim-3}. In the latter case
$G$ has a fixed point on $X'$ by Corollary~\ref{corollary:Gorenstein-Fano-3-folds} and
Proposition~\ref{proposition:non-Gorenstein}.
Thus the existence of a $G$-fixed point on $X$
follows from Lemma~\ref{lemma:G-MMP}.
\end{proof}

\begin{proof}[{Proof of Theorem~\xref{theorem:main}}]
Regularizing the action of $G$ and taking an equivariant
desingularization (see e.\,g.~\mbox{\cite[Lemma-Definition~3.1]{Prokhorov-Shramov-2013}}),
we may assume that $X$ is smooth and~\mbox{$G\subset\Aut(X)$}.
By Theorem~\ref{theorem:fixed-point} the group~$G$ has a fixed point on $X$.
Thus there is an embedding~\mbox{$G\hookrightarrow\GL_3(\Bbbk)$} by Remark~\ref{remark:Aut-P}.
Now the assertion follows from Lemma~\ref{lemma:GL}.
\end{proof}

\begin{remark}
If we restrict ourselves to subgroups of $\Cr_3(\Bbbk)$, then it may be possible to strengthen
the bounds. Namely, although rationally connected varieties with relatively large
automorphism groups are sometimes necessarily rational
(as in the case of cubic or quartic hypersurfaces in $\P^4$ with a maximal number of isolated
singularities), in some cases having a large automorphism group
implies non-rationality of a corresponding variety,
cf.~\cite[Remark~7.4.2]{Prokhorov-Shramov-JCr3}, \cite[Theorem~1.2(ii)]{PrzyjalkowskiShramov2016}.
\end{remark}

In any case, we believe that the results of Propositions~\ref{proposition:g-from-6-to-8}
and~\ref{proposition:non-Gorenstein}, and thus of Theorems~\ref{theorem:main} and~\ref{theorem:fixed-point}, can be strengthened.

\bibliographystyle{alpha}
\def\cprime{$'$}

\end{document}